\newtheorem{teo}{Theorem}[section]
\newtheorem{lem}[teo]{Lemma}
\newcommand{\sech}{\operatorname{sech}}
\begin{document}
\begin{frontmatter}

\title{No zero-crossings for random polynomials and the~heat equation}
\runtitle{No zero-crossings for random polynomials}

\begin{aug}
\author[A]{\fnms{Amir} \snm{Dembo}\thanksref{T2}\ead[label=e1]{amir@math.stanford.edu}}
\and
\author[B]{\fnms{Sumit} \snm{Mukherjee}\corref{}\ead[label=e2]{sumitm@stanford.edu}}
\runauthor{A. Dembo and S. Mukherjee}
\affiliation{Stanford University}
\address[A]{Department of Mathematics\\
\quad and Department of Statistics\\
Stanford University\\
Sequoia Hall, 390 Serra Mall\\
Stanford, California 94305-4065\\
USA\\
\printead{e1}}
\address[B]{Department of Statistics\\
Stanford University\\
Sequoia Hall, 390 Serra Mall\\
Stanford, California 94305-4065\\
USA\\
\printead{e2}}
\end{aug}
\thankstext{T2}{Supported in part by NSF Grant DMS-11-06627.}

\received{\smonth{9} \syear{2012}}
\revised{\smonth{4} \syear{2013}}

%
\begin{abstract}
Consider random polynomial $\sum_{i=0}^n a_i x^i$
of independent mean-zero normal coefficients $a_i$,
whose variance is a regularly varying function (in $i$)
of order $\alpha$. We derive general criteria for
continuity of persistence exponents for centered
Gaussian processes, and use these to show that
such polynomial has no roots in $[0,1]$ with probability
$n^{-b_\alpha+o(1)}$, and no roots in $(1,\infty)$
with probability $n^{-b_0+o(1)}$, hence for $n$ even,
it has no real roots with probability $n^{-2b_\alpha- 2b_0+o(1)}$.
Here, $b_\alpha=0$ when $\alpha\le-1$ and otherwise
$b_\alpha\in(0,\infty)$ is independent of the detailed
regularly varying variance function and corresponds to
persistence probabilities for an explicit stationary
Gaussian process of smooth sample path. Further,
making precise the solution $\phi_d({\mathbf x},t)$ to the
$d$-dimensional heat equation initiated by a Gaussian white
noise $\phi_d({\mathbf x},0)$, we confirm that the probability
of $\phi_d({\mathbf x},t)\neq0$ for all $t\in[1,T]$, is
$T^{-b_{\alpha} + o(1)}$, for $\alpha=d/2-1$.
\end{abstract}

%
\begin{keyword}[class=AMS]
\kwd[Primary ]{60G15}
\kwd{26C10}
\kwd[; secondary ]{35K05}
\kwd{26A12}
\end{keyword}
\begin{keyword}
\kwd{Random polynomials}
\kwd{real zeros}
\kwd{heat equation}
\kwd{Gaussian processes}
\kwd{regularly varying}
\end{keyword}

\end{frontmatter}

\section{Introduction}
Algebraic polynomials of the form
%
%
\begin{equation}
\label{one-rev} Q_n(x)=\sum_{i=0}^n
a_i x^i
\end{equation}
with $x \in\mathbb{R}$ and independent, zero-mean random
coefficients $a_i$ are objects of much interest
in probability theory. In particular, for i.i.d.
normal $\{a_i\}$, the number $N_n$ of real roots
has been studied in some detail, starting with
Littlewood and Offord work \cite{LO1,LO2,LO3}
that provides upper and lower bounds on
$E_n=\mathbb{E}[N_n]$ as well as on both tails
of the law of $N_n$. Among its consequences
is the upper bound
$\mathbb{P}(N_n=0)=O(\frac{1}{\log n})$, much refined
in \cite{DPSZ}, which proved that for $n$ even
$\mathbb{P}(N_n=0)=n^{-4b_0+o(1)}$ decays polynomially
and that the same positive, finite, power
exponent~$b_0$ applies for any i.i.d. $\{a_i\}$
of finite moments of all orders.

In another direction, Kac \cite{K} provides an
explicit formula
for $E_n$ in case of i.i.d. normal $\{a_i\}$,
yielding also the sharp asymptotics
$E_n\sim\frac{2}{\pi}\log n$, whereas
\cite{Mas} shows that $N_n$ is asymptotically normal
of mean $E_n$ and
$\operatorname{Var}(N_n)\sim\frac{4}{\pi}(1-\frac{2}{\pi})\log n$.
Most\vspace*{1pt} of these results extend to other distributions
of the i.i.d. $\{a_i\}$ (see the historical account
in \cite{DPSZ}, Section~2).
We also note in passing the rich asymptotic
theory for location of \textit{complex} zeros
of $z \mapsto Q_n(z)$ and related random
analytic functions (cf. \cite{IbZa,KaZa}
and the references therein).

Our focus here is on persistence probabilities
%
%
\begin{equation}
\label{p0} p_{J}(n)=\mathbb{P}\bigl(Q_n(x)< 0,\ \forall x \in J\bigr).
\end{equation}
Such probabilities have been extensively studied,
for other stochastic processes, also in
reliability theory and in the physics literature,
cf. the surveys \cite{M,AS} and references therein.
Specifically, we study the asymptotics
of $p_J(n)$ for $J=[0,1]$, $J=(1,\infty)$,
$J=[0,\infty)$ and $J=\mathbb{R}$, where $\{a_i\}$ are
independent, centered normal with
$\mathbb{E}(a_0^2)=1$ and
$i \mapsto\mathbb{E}(a_i^2) = i^\alpha L(i)$
forms a regularly
varying sequence of order $\alpha$, at
$i \to\infty$. Equivalently, we consider
any $i \mapsto L(i)$ slowly varying at
infinity (namely, such that
$L([\mu i])/L(i)\to1$ when
$i \to\infty$, for any fixed $\mu>0$, cf. \cite{BGT}).
To this end, deriving in Theorem
\ref{general} a new, general flexible
criteria for continuity of persistence
probability tail exponential rates, we
show in Theorem~\ref{poly} that for
\textit{any} slowly varying $L(\cdot)$,
\begin{eqnarray*}
p_{[0,1]}(n)&=& n^{-b_\alpha+o(1)},\qquad p_{(1,\infty)}(n)=
n^{-b_0+o(1)},
\\
p_{[0,\infty)}(n) &=& n^{- b_\alpha- b_0+o(1)}.
\end{eqnarray*}
Subject to a mild regularity condition on $L(2k)/L(2k+1)$,
we further deduce that $p_{\mathbb{R}}(2n)=n^{-2b_\alpha-2b_0+o(1)}$
[clearly, $p_{\mathbb{R}}(2n+1)=0$ and we note
in passing that $\mathbb{P}(N_n=0) = 2 p_{\mathbb{R}}(n)$].

The power exponent $b_\alpha$ is thus universal,
that is, independent of the specific slowly varying
function $L(\cdot)$, and the asymptotics of
$p_{(1,\infty)}(n)$ is further independent of
the order $\alpha$ of the regularly varying
variance of $a_i$ (as already noted in
\cite{SM} for the case of $L(\cdot) \equiv1$).

\subsection{Nonzero crossings for random polynomials}
Hereafter, let $F(s,t):=\sech((t-s)/2)$,
$\{\widehat{Z}_t,t\geq0\}$ denote the
centered
stationary Gaussian process of covariance function
$\exp\{-(t-s)^2/8)\}$ and for each
$\alpha> -1$, consider the centered Gaussian process
%
%
\begin{equation}
\label{Ydef} Y_t^{(\alpha)} = \frac{\int_0^\infty g_t(r) \,dW_r}{(\int_0^\infty g_t(r)^2 \,dr)^{1/2}},
\end{equation}
where $g_t(r):=r^{\alpha/2} \exp(-e^{-t} r)$ (see
\cite{DPSZ}, (1.4), for $\alpha=0$). We start with
some preliminary facts about these processes and
their persistence exponents.

%
\begin{lem}\label{0}
For any $\alpha> -1$, the $\mathcal{C}^\infty(\mathbb{R})$-valued
stochastic process $t \mapsto Y_t^{(\alpha)}$ of (\ref{Ydef}) has
covariance function $F(s,t)^{\alpha+1}$. Further, its persistence
exponent
%
%
\begin{equation}
\label{bk} b_\alpha:= - \lim_{T\rightarrow\infty}\frac{1}{T}\log
\mathbb{P}\Bigl(\sup_{t\in[0,T]} Y_t^{(\alpha)} \leq
\delta_T\Bigr),
\end{equation}
exists and is independent of the precise choice of
$\delta_T\to0$. These persistence exponents are such
that the nonincreasing $(\alpha+1)^{-1} b_\alpha\uparrow1/2$
when $\alpha\downarrow-1$ and
the nondecreasing $(\alpha+1)^{-1/2} b_\alpha\uparrow\hat{b}_\infty$
when $\alpha\uparrow\infty$, where
$\hat{b}_\infty$ denotes the finite persistence exponent
of $\{\widehat{Z}_t\}$.
\end{lem}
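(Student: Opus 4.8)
\emph{Covariance and smoothness.} Since $g_s(r)g_t(r)=r^{\alpha}e^{-(e^{-s}+e^{-t})r}$, a direct Gamma–integral computation gives $\int_0^\infty g_sg_t\,dr=\Gamma(\alpha+1)(e^{-s}+e^{-t})^{-(\alpha+1)}$ and $\|g_t\|_2^2=\Gamma(\alpha+1)(2e^{-t})^{-(\alpha+1)}$, both finite and positive precisely because $\alpha>-1$; substituting into (\ref{Ydef}) collapses the ratio to $\big(2e^{-(s+t)/2}/(e^{-s}+e^{-t})\big)^{\alpha+1}=\sech((t-s)/2)^{\alpha+1}=F(s,t)^{\alpha+1}$. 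For the $\cC^\infty$ assertion I would differentiate the Wiener integral in $t$: each $\partial_t^k g_t(r)$ is $e^{-t}$ times a polynomial in $(r,e^{-t})$ times $g_t(r)$, still in $L^2(dr)$ locally uniformly in $t$, yielding an $L^2(\Omega)$-smooth version; equivalently the stationary process $Y^{(\alpha)}$ has spectral density proportional to $|\Gamma(\tfrac{\alpha+1}{2}-i\omega)|^2$, which decays like $e^{-\pi|\omega|}$ and hence has moments of all orders, so a $\cC^\infty(\R)$ modification exists (its paths are in fact a.s.\ non-analytic, by Belyaev's dichotomy applied to $R^{(2n)}(0)\asymp(2n)!\,\pi^{-2n}$).

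\emph{Existence of $b_\alpha$ and independence of $\delta_T$.} Put $q_\delta(T)=\P(\sup_{[0,T]}Y^{(\alpha)}\le\delta)$. Comparing $Y^{(\alpha)}$ on $[0,S+T]$ with the process obtained by splicing in an independent copy at time $S$ --- legitimate because the discarded cross-covariances $F(s,t)^{\alpha+1}$ are nonnegative --- Slepian's inequality gives $q_\delta(S+T)\ge q_\delta(S)q_\delta(T)$, so $-\tfrac1T\log q_0(T)$ converges by Fekete. Finiteness of the limit follows from $q_0(1)>0$ (a standard small-ball estimate for the non-degenerate stationary process $Y^{(\alpha)}$), and positivity from $b_0\in(0,\infty)$ \cite{DPSZ} together with the monotonicity proven next (which gives $b_\alpha\ge\min(\alpha+1,\sqrt{\alpha+1})\,b_0>0$). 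The identical argument applied to $\widehat Z$, whose covariance is likewise positive, shows $\widehat b_\infty$ exists and is finite. For a general level sequence $\delta_T\to0$, eventually $-\veta\le\delta_T\le\veta$, so $q_\veta(T)\le q_{\delta_T}(T)\le q_{-\veta}(T)$; letting $T\to\infty$ and then $\veta\downarrow0$ reduces the claim to continuity at $0$ of $\delta\mapsto b(\delta):=-\lim_T\tfrac1T\log q_\delta(T)$. I would prove $|b(\delta)-b(0)|=O(\delta^2)$ by a Cameron--Martin shift of $Y^{(\alpha)}$ by $|\delta|$ times a smooth near-indicator of $[0,T]$ (RKHS norm $O(\delta^2T)$, since the spectral density is bounded below on compacta), using that the attached Paley--Wiener functional is favorably signed on $\{\sup_{[0,T]}Y\le0\}$; alternatively this is an instance of Theorem \ref{general}.

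\emph{Monotonicity in $\alpha$.} After the time-change $t\mapsto t/c$, the stationary process $t\mapsto Y^{(\alpha)}_{t/c}$ has covariance $e^{-(\alpha+1)u(\cdot/c)}$ with $u(\tau):=\log\cosh(\tau/2)\ge0$, and persistence exponent $b_\alpha/c$. Writing $\beta=\alpha+1$ and taking $c=\beta$ and $c=\sqrt\beta$, the two quantities of interest, $\beta^{-1}b_\alpha$ and $\beta^{-1/2}b_\alpha$, are the persistence exponents of the unit-variance covariances $S_\beta(\tau)=e^{-\beta u(\tau/\beta)}$ and $R_\beta(\tau)=e^{-\beta u(\tau/\sqrt\beta)}$. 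By Slepian it suffices to check that $\beta\mapsto S_\beta(\tau)$ is non-decreasing and $\beta\mapsto R_\beta(\tau)$ is non-increasing for each fixed $\tau$, and these are one-line facts about $u$: with $x=\tau/\beta$, $\tfrac{d}{d\beta}\big(\beta u(\tau/\beta)\big)=u(x)-xu'(x)$, whose $x$-derivative is $-\tfrac x4\sech^2(x/2)\le0$, so the expression is $\le0$; and with $x=\tau/\sqrt\beta$, $\tfrac{d}{d\beta}\big(\beta u(\tau/\sqrt\beta)\big)=u(x)-\tfrac x2u'(x)$, whose $x$-derivative is $(\sinh x-x)/(8\cosh^2(x/2))\ge0$, so the expression is $\ge0$. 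Hence $(\alpha+1)^{-1}b_\alpha$ is non-increasing and $(\alpha+1)^{-1/2}b_\alpha$ is non-decreasing in $\alpha$.

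\emph{Endpoint values, and the main obstacle.} From $\cosh x\le e^{x^2/2}\wedge e^{|x|}$ one gets $u(\tau)\le\tfrac18\tau^2\wedge\tfrac12|\tau|$, hence $S_\beta(\tau)\ge e^{-|\tau|/2}$ and $R_\beta(\tau)\ge e^{-\tau^2/8}$ for all $\tau$; by Slepian this gives $(\alpha+1)^{-1}b_\alpha\le\tfrac12$ and $(\alpha+1)^{-1/2}b_\alpha\le\widehat b_\infty$, where $\tfrac12$ is the persistence exponent of the Ornstein--Uhlenbeck process of covariance $e^{-|\tau|/2}$ --- read off from the explicit principal Dirichlet eigenfunction $x\mapsto-x$ (eigenvalue $\tfrac12$) of its generator on $(-\infty,0)$. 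Since $S_\beta(\tau)\to e^{-|\tau|/2}$ as $\beta\downarrow0$ and $R_\beta(\tau)\to e^{-\tau^2/8}$ as $\beta\uparrow\infty$, it remains to show these bounds are attained in the limit. For $\beta\downarrow0$ I would argue by hand: for any $c<\tfrac12$, a fixed mesh $h$, and $\beta$ small enough one has $S_\beta(kh)\le e^{-ckh}$ for all $k\ge1$ (since $y\mapsto u(y)/y$ increases to $\tfrac12$ and $\tfrac\beta h\log\cosh(\tfrac h{2\beta})\to\tfrac12$), whence discretizing and using the Markov sampled-OU chain (whose per-step exponent $\lambda_c(h)$ satisfies $\lambda_c(h)/h\to c$) forces $(\alpha+1)^{-1}b_\alpha\ge c$ in the limit; letting $c\uparrow\tfrac12$ gives the value $\tfrac12$. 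For $\beta\uparrow\infty$ the same route fails --- $R_\beta$ has a heavier (exponential) tail than the Gaussian tail of $\widehat Z$'s covariance, so no clean upper covariance bound is available --- and here I would invoke Theorem \ref{general} for the family $R_\beta\to e^{-\tau^2/8}$, which is designed to absorb exactly such tail discrepancies. Identifying these two exact endpoint values (equivalently, showing robustness of the persistence exponent both to the vanishing level $\delta_T$ and to modification of the covariance at large lags) is the principal difficulty; by contrast the monotonicity is unexpectedly cheap once the correct time-change is used, and the covariance and smoothness assertions are routine.
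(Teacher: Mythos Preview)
Your covariance computation, smoothness argument, subadditivity for the existence of $b_\alpha$ at level zero, and the Slepian-plus-time-change monotonicity argument (reducing to the convexity facts about $u(x)=\log\cosh(x/2)$) are all correct and essentially identical to the paper's. The paper likewise uses Theorem~\ref{general} for the endpoint $\alpha\uparrow\infty$, so there you agree.

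There are two places where you diverge, and in both your argument is incomplete. First, for the independence of $\delta_T$: the paper does not argue this directly but cites \cite[Theorem~3.1(iii)]{LS2}, whose hypothesis (\ref{suff1}) is immediate for $\rho_\alpha$. Your Cameron--Martin shift idea is a legitimate alternative route, but as written it is only a sketch; more importantly, your parenthetical ``alternatively this is an instance of Theorem~\ref{general}'' is circular, since condition~(\ref{c3}) of that theorem is exactly the $\delta_T$-independence statement for the limiting process and is assumed there, not proved. Second, for the endpoint $\alpha\downarrow -1$: the paper again applies Theorem~\ref{general} to the family $\widetilde A_\alpha(0,\tau)=\rho_\alpha(|\tau|/(\alpha+1))\downarrow e^{-|\tau|/2}$, verifying (\ref{c2}) via the uniform bound $\widetilde A_\alpha\le\rho_0$, (\ref{c1}) via Lemma~\ref{polyhelp} and (\ref{suff2}), and (\ref{c3}) via (\ref{suff1}) for the Ornstein--Uhlenbeck process. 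Your discretization route is valid in principle, but it rests on the unproved claim that the per-unit-time persistence exponent $\lambda_c(h)/h$ of the mesh-$h$ sampled OU converges to the continuous exponent $c$ as $h\downarrow0$; since discretization only gives $\lambda_c(h)/h\le c$ for free, the needed direction requires a genuine argument (e.g.\ a spectral or coupling estimate for the AR(1) chain), which you have not supplied. Using Theorem~\ref{general} here as well would close the gap with no extra work, since the same verifications as in the $\alpha\uparrow\infty$ case go through.
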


%
\begin{remark} Accurate numerical values are known for
some values of $b_\alpha$ (see \cite{SM} and references therein),
but no analytic prediction for it has ever been given.
The best rigorously proved lower and upper bounds at
$\alpha=0$ are $b_0 \in(1/(4\sqrt{3}),1/4]$, derived in
\cite{Mol}, Proposition~2 and \cite{LS1}, Theorem~3.2,
respectively. From Lemma~\ref{0}, we have that
$b_\alpha$ is between $\sqrt{\alpha+1} b_0$ and $(\alpha+1) b_0$.
Hence, $b_\alpha\in(0,\infty)$ admits the corresponding
lower and upper bounds. It further has linear asymptotics at
$\alpha\downarrow-1$ and square-root growth for $\alpha\to\infty$,
thereby confirming the
predictions of \cite{SM}.
\end{remark}

Here is our first main result.

%
\begin{teo}\label{poly}
Consider random algebraic polynomials $Q_n(\cdot)$
of independent, centered normal coefficients $\{a_i\}$
such that $\mathbb{E}[a_0^2]=1$ and let
$L(i):= i^{-\alpha} \mathbb{E}[a_i^2]$, $i \ge1$, for some $\alpha
\in\mathbb{R}$.
\begin{longlist}[(a)]
\item[(a)] Setting hereafter
$b_\alpha\equiv0$ when $\alpha\le-1$ and
$T_n:=\log n$, we have that for any slowly varying sequence $L(\cdot)$,
%
\begin{eqnarray}
\label{ulb1} \lim_{n\rightarrow\infty}\frac{1}{T_n}\log p_{[0,1]}(n)&=&
-b_{\alpha},
\\
\label{ulb2} \lim_{n\rightarrow\infty}\frac{1}{T_n}\log p_{(1,\infty)}(n) &=&
-b_{0},
\\
\label{p3} \lim_{n\rightarrow\infty}\frac{1}{T_n}\log p_{[0,\infty)}(n)&=&
-b_{\alpha}-b_0.
\end{eqnarray}

\item[(b)] If in addition
%
%
\begin{equation}
\label{rate3} \lim_{n\rightarrow\infty}n \biggl|\frac{L(n+1)}{L(n)}-1 \biggr|=0,
\end{equation}
then further,
%
%
\begin{equation}
\label{lb3} \lim_{n\rightarrow\infty} \frac{1}{T_n}\log p_{\mathbb{R}}(2n) =
-2b_{\alpha}-2b_0.
\end{equation}
\end{longlist}
\end{teo}

%
%
\begin{remark}
The rate condition (\ref{rate3}) is the discrete version of the
condition $x \frac{d}{dx} (\log L(x)) \to0$ as $x \to\infty$.
For example, (\ref{rate3}) holds when $L(x)=(\log x)^\gamma$,
for any $\gamma\in\mathbb{R}$, or
when $L(x)=\exp\{(\log x)^\lambda\}$ for any $|\lambda| < 1$,
but fails in case of the slowly varying $L(n)=1+ n^{-1} (1+(-1)^n)$.
\end{remark}

\subsection{Heat equation initiated by white noise}
Setting $K_t({\mathbf x}):=({4\pi t})^{-d/2}\*\exp\{-\frac{
\|{\mathbf x}\|^2_2}{4t}\}$,
recall that for any \textit{smooth enough} $\psi(\cdot)$, the function
%
%
\begin{equation}
\label{E2}\phi_d({\mathbf x},t)=\int_{\mathbb{R}^d}K_{t}({
\mathbf x}-{\mathbf y})\psi({\mathbf y})\,d{\mathbf y}
\end{equation}
is a classical solution of the $d$-dimensional heat equation
%
%
\begin{equation}
\label{E1} \frac{\partial\phi_d({\mathbf x},t)}{\partial t}=\Delta\phi_d({\mathbf x},t)
\end{equation}
on $\mathbb{D}_0 =\mathbb{R}^d \times(0,\infty)$
with initial condition $\phi_d(\cdot,0) = \psi(\cdot)$.
It is \textit{formally}
argued in \cite{SM} that taking for $\psi(\cdot)$ a centered
Gaussian field
of covariance $\delta_d({\mathbf x}-{\mathbf y})$, should yield by (\ref{E2})
a centered Gaussian field
$\phi_d({\mathbf x},t)$ with covariance
$\mathbb{E}[\phi_d({\mathbf x_1},t)\phi_d({\mathbf x_2},s)]=
K_{t+s}({\mathbf x_1}-{\mathbf x_2})$.
Assuming the existence of such a process, it would have for each
fixed ${\mathbf x}_1={\mathbf x}_2={\mathbf x}\in\mathbb{R}^d$, the
time covariance $K_{t+s}({\mathbf0})$. Thus, taking $\alpha=d/2-1$, it
follows that
\[
\phi_d\bigl({\mathbf x},e^t\bigr)\stackrel{\mathcal{ L}} {=}
\sqrt{K_{2e^t}({\mathbf 0})}Y_t^{(\alpha)}
\]
for $\{Y_t^{(\alpha)}\}$ of Lemma~\ref{0}. Consequently,
%
%
\begin{eqnarray}
\label{E3} \lim_{T\rightarrow\infty} \frac{1}{\log T}\log\mathbb{P}\bigl(
\phi_d({\mathbf x},t)\neq0,\ \forall t\in [1,T]\bigr)&=& -b_\alpha,
\\
\label{E3R} \lim_{R\rightarrow\infty} \frac{1}{R}\log\mathbb{P}\bigl(
\phi_1(x,1)\neq0,\ \forall|x| \le R/2\bigr)&=&- \hat{b}_\infty
\end{eqnarray}
for $b_\alpha$ of (\ref{bk}) and $\hat{b}_\infty$ of Lemma~\ref{0}.
That is, the seemingly unrelated random polynomials
$\{Q_n(x)_{x\in[0,1]}\}$ have the same persistence
power exponent $b_\alpha$ as these solutions
$\{\phi_{2(\alpha+1)}({\mathbf x},t)_{t\in[1,T]}\}$ of the heat equation.

While on a set of full measure the random function
${\mathbf x}\mapsto\psi({\mathbf x})$ is not Lebesgue measurable
[hence the integral (\ref{E2}) ill-defined], we make precise
the notion of solution $\phi_d({\mathbf x},t) \in\mathcal{C}^\infty
(\mathbb{D}_0)$
of (\ref{E1}) such that $\phi_d({\mathbf x},t)$ is a centered
Gaussian field of covariance $K_{t+s}({\mathbf x_1}-{\mathbf x_2})$.
(Added in galleys: after our article was accepted for publication we realized that this is already done in
Section 8 of \cite{BL}.)
Of course, upon rigorously constructing such a field we immediately
get the confirmation of both (\ref{E3}) and (\ref{E3R}).

%
\begin{teo}\label{Heat}
Equip $\mathcal{C}_0=\mathcal{C}^{2,1}(\mathbb{D}_0)$ with the
topology of uniform
convergence on compacts of function and its relevant partial
derivatives of first and \mbox{second order}. There exists a
$(\mathcal{C}_0,\mathcal{B}_{\mathcal{C}_0})$-valued, centered
Gaussian field
$\phi_{d}({\mathbf x},t)$ of covariance function
$C(({\mathbf x}_1,t),({\mathbf x}_2,s))=K_{s+t}({\mathbf x}_1-{\mathbf x}_2)$, which
satisfies (\ref{E1}) on~$\mathbb{D}_0$. Further, $\phi_d \in
\mathcal{C}^\infty(\mathbb{D}_0)$
and for any $0<t_1<t_2$,
%
%
\begin{equation}
\label{E4} \phi_d({\mathbf x},t_2)= \int_{\mathbb{R}^d}K_{t_2-t_1}({
\mathbf x}-{\mathbf y})\phi_d({\mathbf y},t_1)\,d{\mathbf y}.
\end{equation}
\end{teo}

\subsection{Continuity of persistence exponents for Gaussian processes}
The motivation for this work lies in the prediction
of \cite{SM2,SM} for much of our results, but the persistence
asymptotics of Theorem~\ref{poly} has
been rigorously derived before only for i.i.d. $\{a_i\}$
[namely, $\alpha=0$ and $L(\cdot) \equiv1$], where
\cite{DPSZ} relies on an explicitly simple closed
form of $\operatorname{Cov}(Q_n(x),Q_n(y))$ for handling this case.
In contrast, no such closed form expression exist
for $\alpha\neq0$ and especially for $L(\cdot)\not\equiv1$,
henceforth requiring a more delicate treatment of the
covariance in various domains of $x,y$, to which much
of our effort is devoted.

Indeed, beware that the convergence of covariance functions for
smooth centered Gaussian processes [such as $Q_n(\cdot)$],
while implying weak convergence of the corresponding
laws, falls short of relating their large deviations
(and in particular the relevant persistence power
exponents). For example, with $Z$ standard normal
independent of $\{ Y_\cdot^{(\alpha)} \}$, the positive
autocorrelation of the smooth, stationary, centered
Gaussian\vspace*{-2pt} process
$\sqrt{1-\epsilon_n} Y_\cdot^{(\alpha)}+\sqrt{\epsilon_n} Z$
is within $\epsilon_n \to0$ of the autocorrelation of
$\{ Y_\cdot^{(\alpha)}\}$ but for $\epsilon_n \log n \to\infty$,
the corresponding persistence exponent is easily shown to be
$0 \ne b_\alpha$. Our second main result shows that in
contrast, persistence power exponent is continuous
for any collection of centered Gaussian processes whose
maxima over compact intervals converge \textit{pointwise},
\textit{arbitrarily slowly}, to those of the limit process
[see (\ref{c1}) below], provided their nonnegative
auto-correlations satisfy a mild uniform integrability
condition [see (\ref{c2})], and the persistence exponent
of the limiting process is somewhat stable [see (\ref{c3})].

%
\begin{teo}\label{general}
Let $\mathcal{S}$ denote the class of all stationary, autocorrelation functions
$A\dvtx [0,\infty) \mapsto[-1,1]$ with $\mathcal{S}_+$ denoting the
subset of
nonnegative $A \in\mathcal{S}$. For centered stationary Gaussian process
$\{Z_t\}_{t\geq0}$ of autocorrelation $A(s,t)=A(0,t-s) \in\mathcal{S}_+$,
the nonnegative, possibly infinite, limit\vspace*{-1pt}
\[
b(A):=-\lim_{T\rightarrow\infty}\frac{1}{T}\log\mathbb {P}\Bigl(\sup
_{t\in[0,T]}Z_t<0\Bigr),\vspace*{-1pt}
\]
exists. Consider centered Gaussian processes $\{Z_t^{(k)}\}_{t\geq0}$,
$1 \le k \le\infty$ (normalized to have $\mathbb{E} [(Z_t^{(k)})^2]=1$),
of nonnegative autocorrelations $A_k(s,t)$, such that $A_\infty(s,t)
\in\mathcal{S}_+$.
Suppose that the following three conditions hold:\vspace*{-1pt}
%
%
\begin{equation}
\label{c2} 
\limsup_{k,\tau\rightarrow\infty}\,\sup_{s \geq0} \biggl\{
\frac
{\log A_k(s,s+\tau)}{\log\tau} \biggr\} < -1.\vspace*{-3pt}
\end{equation}
%
%
\begin{equation}
\label{c3} \limsup_{M\rightarrow\infty}\frac{1}{M}\log\mathbb{P}
\Bigl(\sup_{t\in
[0,M]}Z^{(\infty)}_t <
M^{-\eta}\Bigr) =- b(A_\infty) \qquad\forall\eta>0\vspace*{-1pt}
\end{equation}
and there exist $\zeta>0$ and $M_1<\infty$ such that for any $z \in
[0,\zeta]$ and $M \ge M_1$,\vspace*{-1pt}
%
%
\begin{eqnarray}
\label{c1} \mathbb{P}\Bigl(\sup_{t\in[0,M]}Z^{(\infty)}_t<z
\Bigr) &\le&\liminf_{k\rightarrow\infty} \inf_{s\geq0}
\mathbb{P}\Bigl(\sup_{t\in[0,M]} Z^{(k)}_{s+t}<z
\Bigr)\nonumber
\\
&\le&\limsup_{k\rightarrow\infty}\, \sup_{s\geq0} \mathbb{P}
\Bigl(\sup_{t\in[0,M]} Z^{(k)}_{s+t}<z\Bigr)
\\
&\le& \mathbb{P}\Bigl(\sup_{t\in[0,M]}Z^{(\infty)}_t \le z \Bigr).\nonumber
\end{eqnarray}
Then
%
%
\begin{equation}
\label{cnc} 
\lim_{k,T \rightarrow\infty} \frac{1}{T}\log
\mathbb{P}\Bigl(\sup_{t\in
[0,T]}Z_t^{(k)}<0
\Bigr) =-b(A_\infty).
\end{equation}
\end{teo}

%
\begin{remark}\label{rmk-Tk} Theorem~\ref{general} only requires that
(\ref{c1}) holds for $z=0$ and $z_M = C M^{-\eta} \downarrow0$. Further,
its proof applies even when $A_k(\cdot,\cdot)$ and
$Z^{(k)}_t$ are defined only on $[0,T^\star_k]$, for some given
$T^\star_k \to\infty$,
with the conclusion (\ref{cnc}) valid then
for any unbounded $T_k \leq T^\star_k$.
We also note in passing that when dealing with \textit{stationary}
$A_k \in\mathcal{S}_+$ for all $k$ large enough, it suffices to consider
only $s=0$ in (\ref{c2}) and~(\ref{c1}), with (\ref{cnc})
implying in particular that, in such setting,
%
%
\begin{equation}
\label{cnc1} \lim_{k\rightarrow\infty}b(A_k)=b(A_\infty).
\end{equation}
\end{remark}

The first of the three conditions of Theorem~\ref{general}, namely
(\ref{c2}), is usually easy to check. Its second condition,
(\ref{c3}), is relatively mild, and in particular applies
whenever $Z^{(\infty)}_t$ of continuous
sample path has decreasing autocorrelation $A_\infty(0,t)$ such that
%
%
\begin{equation}
\label{suff1} a_{h,\theta}^2:= \inf_{0 < t \leq h}
\biggl\{ \frac{A_\infty(0,\theta t)-A_\infty(0,t)}{1-A_\infty(0,t)} \biggr\} > 0
\end{equation}
for any finite $h>0$ and $\theta\in(0,1)$ (see \cite{LS2}, Theorem~3.1(iii), and its proof).

Our next lemma provides explicit sufficient conditions that yield the
last condition, (\ref{c1}), of Theorem~\ref{general}
[and which we utilize when proving Lemma~\ref{0} and part (a)
of Theorem~\ref{poly}].

%
\begin{lem}\label{polyhelp}
Condition (\ref{c1}) holds if to $D \in\mathcal{S}$ corresponds a
Gaussian process
of continuous sample paths and for any finite $M$ there exist positive
$\epsilon_k \to0$ such that whenever $\tau\in[0,M]$ (and $s \in
[0,T^\star_k]$),
%
%
\begin{eqnarray}\label{c11}
(1-\epsilon_k)A_\infty(0,\tau) + \epsilon_k D(0,\tau) &\leq& A_k(s,s+\tau)
\nonumber\\
&\leq& (1-\epsilon_k) A_\infty(0,\tau)+\epsilon_k.
\end{eqnarray}
Alternatively, setting $p^2_k(u):= 2 - 2 \inf_{s \ge0, \tau\in
[0,u]} A_k(s,s+\tau)$,
if $A_k(s,s+\tau) \to A_\infty(0,\tau)$ pointwise and
%
%
\begin{equation}
\label{suff3} \lim_{\delta\downarrow0} \sup_{1 \le k \le\infty} \int
_0^\infty \bigl[p_k
\bigl(e^{-v^2}\bigr) \wedge\delta\bigr] \,d v = 0,
\end{equation}
then the corresponding laws of $\{Z^{(k)}_{s+\cdot}\dvtx  s \ge0, 1 \le k
\le\infty\}$ are uniformly
tight with respect to supremum norm on $\mathcal{C}[0,M]$, which for
$A_k \in\mathcal{S}$
implies that (\ref{c1}) holds for any $z \in\mathbb{R}$.
\end{lem}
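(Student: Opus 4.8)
The plan is to handle the two sufficient conditions separately, as they call for different ideas: the first is a direct Gaussian comparison argument, while the second is a tightness argument via a chaining/entropy bound.

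For the first condition, I would use Slepian-type inequalities. The key observation is that the displayed sandwich \eqref{c11} expresses $A_k(s,s+\tau)$, for $\tau\in[0,M]$, as lying between the auto-correlation of the mixture process $\sqrt{1-\epsilon_k}\,Z^{(\infty)}+\sqrt{\epsilon_k}\,U$ (where $U$ is an independent centered stationary Gaussian process with auto-correlation $D$, of continuous sample paths since $D$ corresponds to such a process) and that of $\sqrt{1-\epsilon_k}\,Z^{(\infty)}+\sqrt{\epsilon_k}\,Z'$ (with $Z'$ standard normal, independent of $Z^{(\infty)}$). Slepian's lemma then gives, for any fixed $z$,
\begin{align*}
\mathbb{P}\Big(\sup_{t\in[0,M]}\big[\sqrt{1-\epsilon_k}Z^{(\infty)}_t+\sqrt{\epsilon_k}Z'\big]<z\Big)
&\le \mathbb{P}\Big(\sup_{t\in[0,M]}Z^{(k)}_{s+t}<z\Big)\\
&\le \mathbb{P}\Big(\sup_{t\in[0,M]}\big[\sqrt{1-\epsilon_k}Z^{(\infty)}_t+\sqrt{\epsilon_k}U_t\big]<z\Big),
\end{align*}
uniformly in $s$. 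Now let $\epsilon_k\downarrow 0$. For the upper bound: $\sqrt{1-\epsilon_k}Z^{(\infty)}+\sqrt{\epsilon_k}U$ converges in law on $\mathcal{C}[0,M]$ (both summands having continuous paths) to $Z^{(\infty)}$, so by the Portmanteau theorem $\limsup_k$ of the right side is at most $\mathbb{P}(\sup_{t\in[0,M]}Z^{(\infty)}_t\le z)$, the closed-set bound. For the lower bound: conditioning on $Z'=y$ and using $\sqrt{1-\epsilon_k}\to 1$, a short argument shows $\liminf_k$ of the left side is at least $\mathbb{P}(\sup_{t\in[0,M]}Z^{(\infty)}_t<z)$ via the open-set Portmanteau bound (the additive constant $\sqrt{\epsilon_k}Z'$ only shifts the level by an amount tending to $0$, and one absorbs it by a further $\liminf$ over a slightly smaller level, using right-continuity of $z\mapsto\mathbb{P}(\sup<z)$ at points of continuity — or simply noting it suffices to check \eqref{c1} at $z=0$ and $z_M\downarrow 0$ per Remark \ref{rmk-Tk}). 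This yields \eqref{c1}.

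For the second, alternative condition, the aim is uniform tightness of $\{\mathrm{Law}(Z^{(k)}_{s+\cdot}):s\ge 0,\,1\le k\le\infty\}$ on $\mathcal{C}[0,M]$ with the sup norm. Since $\mathbb{E}[(Z^{(k)}_t)^2]=1$ the one-dimensional marginals are tight, so by the standard criterion it remains to control the modulus of continuity. The canonical pseudo-metric of $Z^{(k)}$ satisfies $\mathbb{E}[(Z^{(k)}_{s+t}-Z^{(k)}_{s+t'})^2]=2-2A_k(s+t',s+t)\le p_k^2(|t-t'|)$, and $p_k(\cdot)$ is nondecreasing; the hypothesis \eqref{suff3} says precisely that the Dudley entropy integral of these pseudo-metrics, after the time change $u=e^{-v^2}$, is uniformly small on short scales. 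The Dudley / Fernique bound for Gaussian processes then gives a uniform-in-$(k,s)$ estimate $\mathbb{E}[\sup_{|t-t'|\le\delta}|Z^{(k)}_{s+t}-Z^{(k)}_{s+t'}|]\le C\int_0^{\sqrt{\delta}}p_k(e^{-v^2})\,dv\to 0$ as $\delta\downarrow 0$ (here I use that $p_k(e^{-v^2})$ is integrable near $v=\infty$, which \eqref{suff3} with $\delta$ fixed and $p_k\le p_k\wedge\delta$ small... — more carefully, one splits the integral at a large $v_0$ and uses $p_k\le\sqrt{2}$ together with \eqref{suff3} to see the tail contribution is uniformly small). This gives uniform tightness; then since $A_k(s,s+\tau)\to A_\infty(0,\tau)$ pointwise, finite-dimensional distributions of $Z^{(k)}_{s+\cdot}$ converge to those of $Z^{(\infty)}_\cdot$, so every subsequential weak limit is $\mathrm{Law}(Z^{(\infty)}_\cdot)$; applying Portmanteau (open set $\{\sup<z\}$ for the lower bound, closed set $\{\sup\le z\}$ for the upper) to a supremum-convergent-determining functional yields \eqref{c1} for every $z\in\mathbb{R}$.

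The main obstacle I expect is the tightness step: verifying that \eqref{suff3} really does dominate the Dudley integral uniformly in $k$ (including $k=\infty$) and in $s$, and in particular handling the change of variables $u=e^{-v^2}$ so that the near-origin behavior of $p_k(u)$ as $u\downarrow 0$ (i.e. $v\to\infty$) is controlled — this is where the subtle interplay between the truncation level $\delta$ and the decay of $p_k$ lives. The Gaussian-comparison route for \eqref{c11} is, by contrast, essentially bookkeeping with Slepian's inequality and the Portmanteau theorem, the only delicate point being the level-perturbation in the lower bound, which Remark \ref{rmk-Tk} lets us sidestep by only needing $z\in\{0\}\cup\{z_M\downarrow 0\}$.
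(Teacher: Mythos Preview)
Your overall strategy matches the paper's: Slepian comparison against the two mixture processes for the first part, and a Dudley-type entropy bound plus Prohorov/Portmanteau for the second. One concrete error and one methodological difference are worth noting.

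\textbf{The Slepian sandwich is reversed.} The process $\sqrt{1-\epsilon_k}\,Z^{(\infty)}+\sqrt{\epsilon_k}\,Z'$ has auto-correlation $(1-\epsilon_k)A_\infty+\epsilon_k$, which by \eqref{c11} \emph{dominates} $A_k$; Slepian then gives that its persistence probability is \emph{larger} than that of $Z^{(k)}_{s+\cdot}$, not smaller. Likewise the $U$-mixture, with correlation $(1-\epsilon_k)A_\infty+\epsilon_k D\le A_k$, furnishes the \emph{lower} bound. Your display should read
\[
\mathbb{P}\Big(\sup_{[0,M]}\big[\sqrt{1-\epsilon_k}\,Z^{(\infty)}+\sqrt{\epsilon_k}\,U\big]<z\Big)
\le \mathbb{P}\Big(\sup_{[0,M]}Z^{(k)}_{s+\cdot}<z\Big)
\le \mathbb{P}\Big(\sup_{[0,M]}\big[\sqrt{1-\epsilon_k}\,Z^{(\infty)}+\sqrt{\epsilon_k}\,Z'\big]<z\Big).
\]
Fortunately the slip is harmless for your argument, since both comparison processes converge in law on $\mathcal{C}[0,M]$ to $Z^{(\infty)}$ and your Portmanteau step applies symmetrically; but the paper (and any reader) will expect the correct orientation.

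\textbf{Portmanteau versus explicit level-shifting.} The paper avoids Portmanteau entirely and instead splits off the perturbation directly: e.g.\ from the (correct) lower bound it writes
\[
\mathbb{P}\Big(\sup_{[0,M]}\{\sqrt{1-\epsilon_k}\,Z^{(\infty)}+\sqrt{\epsilon_k}\,U\}<z\Big)
\ge \mathbb{P}\Big(\sup_{[0,M]}Z^{(\infty)}<z-2\epsilon_k^{1/4}\Big)-\mathbb{P}\Big(\sup_{[0,M]}U\ge \epsilon_k^{-1/4}-|z|\Big),
\]
and similarly for the upper bound with $X_1$ in place of $U$. This is more elementary and sidesteps the continuity-of-the-level issue you flag (and then defer to Remark~\ref{rmk-Tk}); your weak-convergence route is equally valid but a touch heavier. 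For the second part, the paper invokes exactly the chaining bound you describe (citing \cite[Theorem 1.4.1]{AT} in the form $\mathbb{E}\sup_{|t-t'|\le\delta}|Z^{(k)}_{s+t}-Z^{(k)}_{s+t'}|\le C\int_0^\infty[p_k(e^{-v^2})\wedge\delta]\,dv$, which is precisely \eqref{suff3}), then Arzel\`a--Ascoli and Prohorov, so your tightness outline is on target; just be careful that the displayed entropy bound you wrote, with upper limit $\sqrt{\delta}$ and no truncation, is not the right form---the $\wedge\,\delta$ truncation over the full half-line is what makes \eqref{suff3} exactly the needed hypothesis.
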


For example, by dominated convergence, (\ref{suff3}) holds whenever for
some $\eta> 1$,
%
%
\begin{equation}
\label{suff2} \limsup_{u \downarrow0} |\log u|^{\eta} \sup
_{1 \le k \le\infty} \bigl\{ p^2_k (u) \bigr\} <
\infty.
\end{equation}

%
\begin{remark}\label{rmk-binom}
To demonstrate the flexibility of our approach, we utilize Remark~\ref{rmk-Tk} to confirm
the persistence exponent values predicted by \cite{SM} for the so called
Binomial random polynomials. That is, with $\hat{b}_\infty$ as in
Lemma~\ref{0},
if $\mathbb{E}[a_i^2] = \frac{n}{i}$ for $i=0,\ldots,n$, then
%
%
\begin{eqnarray}
\label{bin1} \lim_{n\rightarrow\infty} n^{-1/2} \log p_{[0,\infty)}(n) &=& - \pi
\hat{b}_\infty,
\\
\label{bin2} \lim_{n\rightarrow\infty} (2n)^{-1/2} \log p_{\mathbb{R}} (2n) &=& - 2
\pi \hat{b}_\infty.
\end{eqnarray}
Indeed, the parameterization $x:=\tan(s/(2\sqrt{n}))$, with $s \in
[0,\pi\sqrt{n})$
for $x \in\mathbb{R}_+$ and $s \in(-\pi\sqrt{n},\pi\sqrt{n})$ in
case $x
\in\mathbb{R}$, translates
the Binomial random polynomials, into stationary, centered Gaussian processes
whose autocorrelations
\[
A_n(s,t):= \biggl[ \cos \biggl( \frac{t-s}{2 \sqrt{n}} \biggr)
\biggr]^n
\]
are nonnegative when either $s,t \in[0, \pi\sqrt{n})$ or $n$ is even.
Recall that the continuous, symmetric function $f(u):= u^2/2 + \log
\cos(u)$
on $|u| \le\pi/2$, decreases in $u \ge0$; hence
$A_n(0,\tau) \uparrow e^{-\tau^2/8}:= A_\infty(0,\tau)$ as $n \to
\infty$,
per fixed $\tau\in\mathbb{R}$ [out of which uniform
super-exponential decay in
$\tau$, hence condition (\ref{c2}) follows]. With
$A_\infty(0,\tau) \in\mathcal{S}_+$ both (\ref{bin1}) and (\ref{bin2}) are
specializations to this context of conclusion (\ref{cnc1}) of
Theorem~\ref{general}, so
it remains only to verify that
(\ref{suff1})~and~(\ref{suff2}) hold here. Now,
condition (\ref{suff1}) holds, for example, by \cite{LS2}, Remark 3.1,
whereas~(\ref{suff2}) holds since $p_n^2(u) \le p_2^2(u) \le u^2/4$
for all $n \ge2$ and~$u$.
\end{remark}

\subsection{Theorem \texorpdfstring{\protect\ref{poly}}{1.3}: Proof outline and extensions}
We proceed to outline the intuition, following \cite{DPSZ} and \cite{SM},
which governs our proof of Theorem~\ref{poly}.
First, since $x\mapsto Q_n(x)$ is continuous, for
$x \in[0,1]$ not too close to $1$, the sign of $Q_n(x)$ can
be controlled by the value of $Q_n(0)$; hence,
the asymptotics of $p_{[0,1]}(n)$ is dominated by the behavior of
$Q_n(x)$ for $x\approx1$. To handle the latter, setting $x=e^{-u}$
allows for approximating
%
%
\begin{equation}
\label{defhaln} \quad\operatorname{Cov}\bigl(Q_n \bigl(e^{-u}
\bigr),Q_n \bigl(e^{-v}\bigr)\bigr)=1+\sum_{i=1}^{n}L(i)
i^{\alpha} e^{-i(u+v)}:= h_{\alpha,n}(u+v)
\end{equation}
for $\alpha> -1$ and small, but not too small values of $u,v$
[namely, in range of $(w_\ell,w_h)$, for
$n w_\ell\to\infty$ and $w_h \to0$], by
\[
\int_{0}^\infty L(r) r^{\alpha} e^{-r(u+v)}\,dr
\sim \Gamma(\alpha+1) (u+v)^{-(\alpha+1)} L\biggl(\frac{1}{u+v}\biggr).
\]
The correlation between $Q_n(e^{-u})$ and $Q_n(e^{-v})$ is
then approximately\break $S(u,v)\* R(u,v)^{\alpha+1}$ where
%
%
\begin{equation}
\label{deflimitcov} R(u,v):= \frac{2\sqrt{uv}}{u+v}, \qquad S(u,v):=\frac{L(1/(u+v))}{\sqrt{L(1/(2u))L(1/(2v))}
}
\end{equation}
and for small $u,v$ the slowly varying nature of $L(\cdot)$ at infinity
implies that $S(u,v)$ is nearly one.
Consequently, replacing $S(u,v)$ by $1$, upon
setting $s:=-\log u$ and $t:=-\log v$ we arrive at the
correlation between $Y_t^{(\alpha)}$ and $Y_s^{(\alpha)}$
with relevant range $t,s \in[\delta T_n,(1-\delta) T_n]$
(for $w_\ell=n^{-(1-\delta)}$ and $w_h=n^{-\delta}$),
yielding
the persistence power exponent $b_\alpha$ of (\ref{bk}).
On a more technical note,
as long as the ratio $u/v$ is bounded, we have indeed
that $S(u,v) \approx1$ for any slowly varying $L(\cdot)$,
but the supremum of $u/v$ over the domain of $(u,v)$
relevant to the asymptotics of $p_{[0,1]}(n)$ is $O(n)$,
requiring us to rely on Theorem~\ref{general}.

Similarly, the main contribution to
$p_{(1,\infty)}(n)$ comes from $x\approx1$. However,
setting $x=e^u$, even at the relevant
range of small $u,v \in(n^{-(1-\delta)},n^{-\delta})$, here
the large values of $i$ dominate the
covariance function of $Q_n(e^{u})$ resulting, for any
$\alpha\in\mathbb{R}$, with
\[
\operatorname{Cov}\bigl(Q_n\bigl(e^{u}\bigr),Q_n
\bigl(e^{v}\bigr)\bigr)= 1+\sum_{i=1}^{n}
L(i) i^{\alpha} e^{i(u+v)} \sim(u+v)^{-1} L(n)
n^{\alpha} e^{n (u+v)}.
\]
The limiting correlation is
now approximately independent of $\alpha$ and $L(\cdot)$,
given for $s=-\log u$ and $t=-\log v$ by $R(u,v) = F(s,t)$
[we note in passing that for \mbox{$\alpha< -1$} this approximation
breaks down at $C(\alpha) \log n/n$, a threshold which
$w_\ell$ must thus exceed, causing further technical
challenge, as seen in proof of Lemma~\ref{ignore}].

Finally, part (b) of Theorem~\ref{poly} then follows
upon showing that, for even values of $n$,
the events of having $Q_n(x)$ negative throughout each of the four intervals
$\pm[0,1]$ and $\pm(1,\infty)$, are approximately independent of
each other
[with (\ref{rate3}) utilized for controlling the dependence between $Q_n(x)$
and $Q_n(-x)$].

%
\begin{remark} We show, in part (b) of Lemma~\ref{ignore}, that
the sequence $n \mapsto p_{[0,1]}(n)$ is bounded away from zero
whenever $\sum_i L(i) i^{\alpha}$ converges
(in~particular, for any $\alpha< -1$). Things are more
involved when $\alpha=-1$, as it is easy to check
that for $L(x)=(\log x)^\gamma$, $\gamma\ge0$ and $n$ large
$h_{-1,n}(e^{-t}+e^{-s})
= (\gamma+1)^{-1} [\min(t,s)]^{\gamma+1}[1 + O(1/\min(t,s))]$
when $t,s \in[1,\log n]$. Hence, for the relevant (large)
values of $t$, the asymptotic autocorrelation of
$Q_n(e^{-e^{-t}})$ is that of Brownian motion, raised to power
$\gamma+1$, suggesting that in this case
$p_{[0,1]}(n) = (\log n)^{-(\gamma+1)/2+o(1)}$ is
sensitive to the choice of $L(\cdot)$. The
lower bound of (\ref{eqsumit-1}) may be improved
to $(|\log v|/|\log u|)^r$, yielding the
persistence lower bound $(\log n)^{-(\gamma+1)+o(1)}$
[by the same reasoning as in proof of (\ref{eqfirst-bm})].
\end{remark}

%
\begin{remark}
As we briefly outline next, Theorem~\ref{general} can also deal
with the main contribution to persistence probabilities for
Weyl random polynomials. Namely, the case of $\mathbb{E}[a_i^2]=1/i!$,
$i \ge0$
and intervals $\overline{J}=[0,\sqrt{n}-\Gamma_n]$ with $\Gamma_n \to
\infty$.
In this setting, we have that
\[
h_n(st):=\operatorname{Cov}\bigl(Q_n(s),Q_n(t)\bigr)=
\sum_{i=0}^n \frac{(st)^i}{i!} \sim
e^{st}
\]
for $s,t \in\overline{J}$, with uniform relative error $\eta_n:= 1 -
e^{-z} h_n(z) = \mathbb{P}(N_z>n)$,
where $N_z$ denotes a Poisson random variable of parameter $z=n - \sqrt {n} \Gamma_n$.
Considering $A_n(s,t):=\operatorname{corr}(Q_n(s),Q_n(t))$ and $A_\infty
(s,t)=e^{-(t-s)^2/2}$,
this yields the bound~(\ref{c11}) for $D(s,t)=A_\infty(s,t)^2$,
some $\epsilon_n \to0$ and all $s,t \in\overline{J}$, so from Lemma~\ref{polyhelp}
we have that (\ref{c1}) holds when $s \in\overline{J}$. The covariance estimate
further implies that
$A_n(s,t)\leq4 A_\infty(s,t)$ for all $s,t \in\overline{J}$ and $n$
large enough,
from which~(\ref{c2}) follows. We have seen already that (\ref{c3})
holds for
$\widehat{Z}_{2t}$ (see Remark~\ref{rmk-binom}),
so taking $n^{-1/2} \Gamma_n \to0$ we deduce from Theorem~\ref{general} that
\[
\lim_{n\rightarrow\infty} n^{-1/2} \log p_{\overline{J}} (n)= - 2
\hat{b}_\infty
\]
as predicted in \cite{SM}. The upper bound
$p_{\mathbb{R}_+} (n) \le\exp(-2 \hat{b}_\infty n^{1/2}
(1+o(1)))$ follows
and to confirm, as predicted there, that it is sharp, one needs only to
show that $n^{-1/2} \log p_{[\sqrt{n}-\Gamma_n,\infty)} (n) \to0$.
\end{remark}

%
\begin{remark} While we do not pursue this here,
by a strong approximation argument like the one done in \cite{DPSZ},
the conclusions of Theorem~\ref{poly}
should extend to nonnormal $\{a_i\}$ with all moments finite.
\end{remark}

%
\begin{remark}
Changing from mean-zero coefficients to regularly varying
negative mean of order $\alpha_\star$ can alter persistence power exponents
associated with $Q_n(\cdot)$, depending on the relation between
$\alpha$
and $\alpha_\star$. Indeed, setting $\mathbb{E}[a_i] = - i^{\alpha
_\star}
L_\star(i)$ for some $\alpha_\star\in\mathbb{R}$, some slowly varying
$L_\star(\cdot)$
and all $i \ge1$, results with $\mathbb{E}[Q_n(e^{-u})]$ having the
same form
as $-h_{\alpha_\star,n}(u)$ in the regime of small, but not too small
values of $u$ of relevance here. The relevant persistence power exponent
is thus reduced, or eliminated all together, when
$h_{\alpha_\star,n}(u) \gg\sqrt{h_{\alpha,n}(2u)}$ and expected to
remain intact when $h_{\alpha_\star,n}(u) \ll\sqrt{h_{\alpha,n}(2u)}$.
The same applies for the persistence power exponents associated with
the neighborhood of $-1$, except for $\mathbb{E}[Q_n(-e^{-u})]$ having the
form of $h_{\alpha_\star-1,n}(u)$, due to cancellations \mbox{between}
mean values for even coefficients and those for odd coefficients.
For example, $p_{[0,1]}(n) = n^{-o(1)}$ even for $\alpha> -1$
as soon as $(\alpha_\star+ 1) > (\alpha+1)/2$, whereas for
$p_{[-1,0]}(n)$ this requires $\alpha_\star>(\alpha+1)/2$. Similarly,
we get the prediction $p_{(1,\infty)} (n) = n^{-\lambda b_0}$ when
$\alpha_\star= (\alpha- \lambda)/2$ for $\lambda\in[0,1]$ [and
upon reducing $\alpha_\star$ by one, same applies for
$p_{(-\infty,-1)}(n)$]. We prove none of these predictions, but
note in passing their agreement in case $\alpha_\star=\alpha=0$
with the rigorous analysis of~\cite{DPSZ}.
\end{remark}

We prove Theorem~\ref{general}, Lemmas~\ref{0}~and~\ref{polyhelp} in
Section~\ref{sec-2}, Theorem~\ref{poly} in
Section~\ref{sec-3} and Theorem~\ref{Heat}
in Section~\ref{sec-5}, devoting Section~\ref{sec-4}
to proofs of the
auxiliary lemmas we use for proving Theorem~\ref{poly}.

\section{Proofs of Lemma \texorpdfstring{\protect\ref{0}}{1.1}, Theorem \texorpdfstring{\protect\ref{general}}{1.6} and
Lemma \texorpdfstring{\protect\ref{polyhelp}}{1.8}}\label{sec-2}

\subsection{Proof of Theorem \texorpdfstring{\protect\ref{general}}{1.6}}
By subadditivity lemma, the existence of the limit
$b(A)$ follows from
Slepian's inequality (see \cite{AT}, Theorem 2.2.1),
and nonnegativity of the autocorrelation $A \in\mathcal{S}_+$.

Considering (\ref{c1}) for $z=0$ and fixed $M$ large enough,
there exist $\xi_k \downarrow0$ such that for all $k$,
\[
\inf_{s\geq0}\mathbb{P}\Bigl(\sup_{t\in[0,M]}Z_{s+t}^{(k)}<0
\Bigr)\geq \mathbb {P}\Bigl(\sup_{t\in[0,M]}Z_t^{(\infty)}<0
\Bigr) - \xi_k.
\]
Thus, by Slepian's inequality and the nonnegativity of $A_k(\cdot,\cdot
)$, we conclude that
\[
\mathbb{P}\Bigl(\sup_{t\in[0,T]}Z^{(k)}_t<0
\Bigr)\geq \Bigl[\mathbb{P}\Bigl(\sup_{t\in
[0,M]}Z^{(\infty)}_t<0
\Bigr) -\xi_k \Bigr]^{\lceil T/M\rceil},
\]
which upon taking $\log$, dividing by $T$ and letting $k,T\rightarrow
\infty$ gives
\[
\liminf_{k,T\rightarrow\infty}\frac{1}{T} \log\mathbb{P}\Bigl(\sup
_{t\in
[0,T]}Z_t^{(k)}<0\Bigr) \geq
\frac{1}{M}\log\mathbb{P}\Bigl(\sup_{t\in[0,M]}
Z^{(\infty)}_t<0\Bigr).
\]
So, considering $M\rightarrow\infty$ completes the proof of the lower
bound in (\ref{cnc}).

To get the matching upper bound, note that by (\ref{c2}), there exist
$\eta>1$ and $M_0$ finite, such that for all large $k$ and any $s,t$,
%
%
\begin{equation}
\label{imme2} A_k(s,t)\leq M_0^\eta|t-s|^{-\eta}.
\end{equation}
For such $\eta$ and $M_0$, set $0<\delta<(1-\eta^{-1})/2$ small
enough for
%
%
\begin{equation}
\label{delta} 4 (M_0 \delta)^\eta\sum_{i=1}^\infty
i^{-\eta} < 1.
\end{equation}
Next, fixing finite $M$ large enough for $\gamma:=(M\delta^2)^{-\eta
}\leq3/4$,
let $s_i=(1+\delta) M i$, $i \ge1$, and consider the
$\delta M$-separated intervals $I_i:= [s_i - M, s_i]$.
Since $|s-t|\geq\delta M |i-j|$ whenever $s\in I_i$, $t\in I_j$,
it follows from (\ref{imme2}) that then
$A_k(s,t)\leq\gamma(M_0 \delta)^\eta|i-j|^{-\eta}$.
Thus, setting $I(t):=i$ for $t\in I_i$ we have that for any \mbox{$s,t \in
\bigcup_i I_i$,}
%
%
\begin{equation}
\label{c4} A_k(s,t) \leq(1-\gamma) A_k(s,t)
1_{\{I(s)=I(t)\}}+ \gamma B\bigl(I(s),I(t)\bigr),
\end{equation}
where $B(i,i)=1$ and $B(i,j):=(M_0 \delta)^\eta|i-j|^{-\eta}$ for $i
\ne j$.
Setting $N:=\lfloor T/(M(1+\delta))\rfloor$ and
\[
\mathcal{J}_T:=\bigcup_{i=1}^N
I_i\subset[0,T],
\]
it follows from (\ref{delta}) and the Gershgorin circle theorem, that
all the eigenvalues of the symmetric $N$-dimensional matrix
${\mathbf B} = \{B(i,j)\}_{i,j=1}^N$ lie within $[1/2,3/2]$. In particular,
${\mathbf B}$ is positive definite
and the RHS of (\ref{c4}) is the autocorrelation of the centered
Gaussian process
$\sqrt{1-\gamma} \overline{Z}^{(k)}_t+\sqrt{\gamma}X_{I(t)}$
on $\mathcal{J}_T$, where the centered, stationary, Gaussian sequence
$\{X_i\}_{i=1}^\infty$ of
autocorrelation $B(i,j)$, is independent of the mutually independent
restrictions of $\overline{Z}^{(k)}_t$ to intervals $I_i$, having
the same law as $Z^{(k)}_t$ within each $I_i$. Thus, by Slepian's inequality
for some $\xi_k \downarrow0$, any $k$ large enough and all $T$,
%
%
\begin{eqnarray}
\label{nc4} \qquad\mathbb{P}\Bigl(\sup_{t\in[0,T]}Z_t^{(k)}<0
\Bigr) &\leq& \mathbb{P}\Bigl(\sup_{t\in
\mathcal{J}_T}Z^{(k)}_t<0
\Bigr)\nonumber
\\
&\leq&\mathbb{P}\Bigl(\sup_{t\in[0,T]}\bigl\{\sqrt{1-\gamma}
\overline{Z}_t^{(k)}+\sqrt{\gamma}X_{I(t)}\bigr\}<0
\Bigr)\nonumber
\\
& =& \mathbb{E} \Biggl[ \prod_{i=1}^N\mathbb{P}
\biggl(\sup_{t\in I_i} Z_t^{(k)}\leq-
\frac{\sqrt{\gamma}}{\sqrt{1-\gamma}}X_i\bigg|{\mathbf X} \biggr) \Biggr]\nonumber
\\
&\leq& \mathbb{E}\prod
_{i=1}^N \Bigl[\mathbb{P}\Bigl(\sup
_{t\in
I_i}Z_t^{(k)}<2\gamma^{\delta}
\Bigr)+1_{\{X_i\leq- \gamma^{\delta-1/2}\}
} \Bigr]
\nonumber
\\
&\leq&\mathbb{E}\prod_{i=1}^N \Bigl[
\mathbb{P}\Bigl(\sup_{t\in
[0,M]}Z_t^{(\infty)} \le2
\gamma^{\delta}\Bigr)+\xi_k + 1_{\{X_i\leq-
\gamma ^{\delta-1/2}\}} \Bigr],
\end{eqnarray}
where in the last inequality we use (\ref{c1}) for $z=2\gamma^\delta
\le\zeta$ (provided $M$ is large enough).
Since $B(i,j)$ is nonincreasing in $|i-j|$, by Slepian's inequality the
last term is in turn further bounded above by
%
\begin{equation}
\label{c5} \sum_{j=0}^N \pmatrix{N\cr j}  \Bigl(
\mathbb{P}\Bigl(\sup_{t\in[0,M]}Z_t^{(\infty)}< 3
\gamma^{\delta
}\Bigr)+\xi _k \Bigr)^{N-j}\mathbb{P}
\bigl(X_i\geq\gamma^{\delta-1/2},1\leq i\leq j\bigr).\hspace*{-25pt}
\end{equation}

Proceeding to bound $\mathbb{P}(X_i\geq\gamma^{\delta-1/2},1\leq
i\leq
j)$, recall that all eigenvalues of~${\mathbf B}$
lie within $[1/2,3/2]$, and so the quadratic form ${\mathbf x'}{\mathbf B}^{-1}
{\mathbf x}$ is bounded bellow by~$\frac{2}{3} \|{\mathbf x}\|_2^2$, yielding
the bound
\begin{eqnarray*}
\mathbb{P}\bigl(X_i\geq\gamma^{\delta-1/2},1\leq i\leq j\bigr)&= &
\operatorname{det}({\mathbf B})^{-1/2} (2\pi)^{-j/2} \int
_{[\gamma^{\delta -1/2},\infty )^j} e^{-(1/2){\mathbf x}'{\mathbf B}^{-1} {\mathbf x}} \,d{\mathbf x}\nonumber
\\
&\leq&\frac{2^{j/2}}{(2\pi)^{j/2}} \int_{[\gamma^{\delta
-1/2},\infty )^j} e^{-1/3\|\mathbf x\|_2^2} \,d{\mathbf x}
\\
&=& 3^{j/2}\mathbb{P}\bigl(X_1\geq\sqrt{2/3} \gamma^{\delta-1/2}\bigr)^j.\nonumber
\end{eqnarray*}
Combining this with (\ref{nc4}) and (\ref{c5}), we deduce that
\[
\mathbb{P}\Bigl(\sup_{t\in[0,T]}Z^{(k)}_t<0
\Bigr)\leq \Bigl[\mathbb{P}\Bigl(\sup_{t\in
[0,M]}Z_t^{(\infty)}<3
\gamma^\delta\Bigr)+\xi_k +\sqrt{3}\mathbb{P}
\bigl(X_1\geq\sqrt{2/3}\gamma^{\delta-1/2}\bigr)
\Bigr]^N.
\]
Considering $T^{-1} \log$ of this inequality in the limit $T,k
\rightarrow\infty$ results with
%
%
\begin{eqnarray}
\label{c6} &&\limsup_{k,T \rightarrow\infty} \frac{1}{T}\log\mathbb {P}\Bigl(\sup
_{t\in[0,T]}Z^{(k)}_t<0\Bigr)
\nonumber\\
&&\qquad \leq\frac{1}{M(1+\delta)}\log \Bigl[\mathbb{P}\Bigl(\sup_{t\in
[0,M]}Z_t^{(\infty)}<
3 \gamma^\delta\Bigr)+\sqrt{3}\mathbb{P}\bigl(X_1\geq \sqrt
{2/3}\gamma^{\delta-1/2}\bigr) \Bigr].\hspace*{-25pt}
\end{eqnarray}
Next, note that with $X_1$ a standard normal variable and $\eta
(1-2\delta) > 1$,
\[
\limsup_{M\rightarrow\infty}\frac{1}{M}\log\mathbb {P}\bigl(X_1
\geq \sqrt{2/3}\gamma^{\delta-1/2}\bigr)\leq-\liminf_{M\rightarrow\infty}
\bigl(3M\gamma^{1-2\delta}\bigr)^{-1} =-\infty,
\]
whereas by (\ref{c3}) we have
\[
\limsup_{M\rightarrow\infty}\frac{1}{M}\log\mathbb {P}\Bigl(\sup
_{t\in[0,M]}Z^{(\infty)}_t< 3 \gamma^{\delta}
\Bigr) = - b(A_\infty).
\]
Thus, considering the RHS of (\ref{c6}) as $M\rightarrow\infty$, then
$\delta\downarrow0$, yields the upper bound in (\ref{cnc}).

\subsection{Proof of Lemma \texorpdfstring{\protect\ref{polyhelp}}{1.8}}
Let $V_t$ denote the stationary, centered Gaussian process of
auto-correlation $D(\cdot,\cdot) \in\mathcal{S}$.
Assuming without loss of generality that $\epsilon_k \in[0,3/4]$ (so
$1-\sqrt{1 -\epsilon_k} \le\sqrt{\epsilon_k} \wedge1/2$),
per fixed $M$ and $z$, by Slepian's inequality and the LHS of (\ref{c11}), for any $s \ge0$ and $k$,
\begin{eqnarray*}
&& \mathbb{P}\Bigl(\sup_{t\in[0,M]}Z_{s+t}^{(k)}< z\Bigr)
\\
&&\qquad \geq\mathbb{P}\Bigl(\sup_{t\in[0,M]} \bigl\{ \sqrt{1-
\epsilon_k}Z_t^{(\infty)} +\sqrt{
\epsilon_k} V_t \bigr\} < z\Bigr)
\\
&&\qquad \geq \mathbb{P}\Bigl(\sup_{t\in[0,M]}Z_t^{(\infty)}<
z - 2 \epsilon _k^{1/4}\Bigr)-\mathbb{P}\Bigl(\sup
_{t\in[0,M]} V_t \ge\epsilon_k^{-1/4}
- |z|\Bigr).
\end{eqnarray*}
By sample path continuity, $\sup_{t\in[0,M]} V_t$ is finite almost
surely, so with $\epsilon_k \to0$ it follows from the preceding that
for any $z$ and $M$ finite,
\[
\liminf_{k\rightarrow\infty} \inf_{s \ge0} \mathbb{P}\Bigl(
\sup_{t\in[0,M]}Z_{s+t}^{(k)}< z\Bigr) \geq
\mathbb{P}\Bigl(\sup_{t\in[0,M]}Z_t^{(\infty)} < z
\Bigr).
\]
Similarly, from the RHS of (\ref{c11}) we have that for any $s \ge0$
and $k$,
\begin{eqnarray*}
&& \mathbb{P}\Bigl(\sup_{t\in[0,M]} Z_{s+t}^{(k)} <
z\Bigr)
\\
&&\qquad \leq \mathbb {P}\Bigl(\sup_{t\in[0,M]} \bigl\{ \sqrt{1-
\epsilon_k} Z_t^{(\infty)} + \sqrt {
\epsilon_k} X_1 \bigr\} < z\Bigr)
\\
&&\qquad \leq \mathbb{P}\Bigl(\sup_{t\in[0,M]}Z_t^{(\infty)}
< z + 2 \epsilon _k^{1/4}\Bigr) + \mathbb{P}
\bigl(X_1 \leq-\epsilon_k^{-1/4} +|z|\bigr),
\end{eqnarray*}
hence for any $z$ and $M$ finite,
\[
\limsup_{k\rightarrow\infty} \sup_{s \ge0} \mathbb{P}\Bigl(
\sup_{t\in[0,M]}Z_{s+t}^{(k)}< z\Bigr) \leq
\mathbb{P}\Bigl(\sup_{t\in[0,M]} Z_t^{(\infty)} \le z
\Bigr).
\]

Turning to the second part of the lemma, recall \cite{AT}, Theorem~1.4.1, that for some
universal constant $C$ and all $s$, $M$, $k$ and $\delta>0$,
\[
\mathbb{E} \Bigl[\sup_{|t-t'| \le\delta, t,t' \le M} \bigl|Z^{(k)}_{s+t}-Z^{(k)}_{s+t'}\bigr|
\Bigr] \leq C \int_0^\infty\bigl[p_k
\bigl(e^{-v^2}\bigr) \wedge\delta\bigr] \,d v
\]
(using integration by parts, one easily confirms that the preceeding is
equivalent to
\cite{AT}, (1.4.5)). Thus, as $Z_s^{(k)}$ has a standard normal law,
for any $k$,
the condition~(\ref{suff3}) guarantees (by an application of
Arzela--Ascoli theorem),
the stated uniform tightness of the laws of $Z_{s+\cdot}^{(k)}$ on
$\mathcal{C} [0,M]$.
As such, by Prohorov's theorem it is a precompact collection of laws
(with respect to
weak convergence on $\mathcal{C}[0,M]$). Clearly, pointwise
convergence of
$A_k(s,s+\tau)$ to $A_\infty(0,\tau)$ implies, per fixed $s$ and finite
$M$, convergence
as $k \to\infty$ of the f.d.d. of $Z^{(k)}_{s+\cdot}$ on $[0,M]$ to
those of
$Z^{(\infty)}_\cdot$. In combination with the preceding
precompactness, this
verifies the convergence of $Z^{(k)}_{s+\cdot}$ to $Z^{(\infty
)}_\cdot$ in
distribution on $\mathcal{C} [0,M]$ (per $s$ and $M$). The convergence
in law of
$\sup_{t \in[0,M]} Z^{(k)}_{s+t}$ to $\sup_{t \in[0,M]} Z^{(\infty)}_t$
which follows (by continuity of $z_\cdot\mapsto\sup_{t \in[0,M]}
z_t$ on
$\mathcal{C} [0,M]$), implies, by definition, the validity of~(\ref{c1})
in case $A_k \in\mathcal{S}$ (where such convergence is by default
uniform in $s$).

\subsection{Proof of Lemma \texorpdfstring{\protect\ref{0}}{1.1}}
The centered Gaussian process $Y_t^{(\alpha)}$ of (\ref{Ydef}) is
well defined
[since the nonrandom, nonzero $g_t \in L_2(\mathbb{R}_+)$ for all $t
\in\mathbb{R}$
and $\alpha>-1$]. Further, since
$\|g_t\|_2 = e^{t (\alpha+1)/2} \|g_0\|_2$ and
\[
(g_t,g_s):= \int_0^\infty
g_t(r) g_s(r) \,dr = \biggl(\frac{e^{-t} + e^{-s}}{2}
\biggr)^{-(\alpha+1)} \|g_0\|_2^2,
\]
it follows that
\[
\operatorname{Cov}\bigl(Y_t^{(\alpha)},Y_s^{(\alpha)}
\bigr) = \frac{(g_t,g_s)}{\|g_t\|_2 \|g_s\|_2} = \biggl[\sech \biggl(\frac
{t-s}{2} \biggr)
\biggr]^{\alpha+1},
\]
so $\{Y_t^{(\alpha)}, t \in\mathbb{R}\}$ is stationary and of the
specified nonnegative
covariance function. Next, since
\[
\hat{g}_t(r):= \frac{g_t(r)}{\|g_t\|_2} = \frac{r^{\alpha/2}}{\|g_0\|_2} \exp
\bigl(-t (\alpha+1)/2 - e^{-t} r\bigr),
\]
is infinitely differentiable in $t$ with
$\|\frac{d^k \hat{g}_t}{d t^k} \|_2$ finite
for all $k \in\mathbb{N}$, the sample functions
$t \mapsto Y_t^{(\alpha)} = \int_0^\infty\hat{g}_t(r) \,dW_r$
of (\ref{Ydef}) are $\mathcal{C}^\infty(\mathbb{R})$-valued.

The limit (\ref{bk}) for $\delta_T \equiv0$ is
merely $b(F^{\alpha+1})$ for covariance function
\mbox{$F^{\alpha+1} \in\mathcal{S}_+$}. Further, with
$\tau\mapsto\rho_\alpha(\tau):=[\sech(\tau/2)]^{\alpha+1}$
decreasing and satisfying the condition of \cite{LS2}, Remark 3.1,
it follows from \cite{LS2}, Theorem 3.1(iii), that
(\ref{bk}) extends to any $\delta_T \to0$.

By yet another application of Slepian's inequality, the
stated monotonicity properties of $\alpha\mapsto b_\alpha$
are immediate consequence of the monotonicity of
$\alpha\mapsto\rho_\alpha(\tau/(\alpha+1))$
and $\alpha\mapsto\rho_\alpha(\tau/\sqrt{\alpha+1})$,
per fixed $\tau$. Applying the monotone transformation
$-\log(\cdot)$ to these two functions of $\alpha+1$ and
setting $f(u):=\log\cosh(u)$, the preceding is in turn
equivalent to $u \mapsto u^{-1} f(u)$
nondecreasing and $u \mapsto u^{-2} f(u)$ nonincreasing
on $(0,\infty)$. The former holds since
\[
\psi_1 (u):= u^2 \bigl(u^{-1} f(u)
\bigr)' = u f'(u) - f(u)
\]
is such that $\psi_1'(u) = u f''(u) = u \sech^2(u) \ge0$,
hence $u \mapsto\psi_1(u)$ is nondecreasing, starting at
$\psi_1(0)=-f(0)=0$. So, necessarily both $\psi_1(u)$ and
$u^{-2} \psi_1(u) = (u^{-1} f(u))'$ are nonnegative for
$u>0$, from which it follows that\vadjust{\goodbreak} $u^{-1} f(u)$ is
nondecreasing. Similarly, setting
\[
\psi_2 (u):= u^3 \bigl(u^{-2} f(u)
\bigr)' = u f'(u) - 2 f(u)
\]
and noting that $f'(0)=\tanh(0)=0$, results with
\[
\psi_2'(u) = u f''(u) -
f'(u) = \int_0^u \bigl(
f''(u) - f''(r) \bigr) \,dr
\le0,
\]
due to the monotonicity of $f''(u)=\sech^2(u)$. So,
with $u \mapsto\psi_2(u)$ nonincreasing on $(0,\infty)$ and
starting at $\psi_2 (0)= - 2 f (0) =0$, we deduce that
$\psi_2(u) \le0$, and hence also $u^{-3} \psi_2(u) = (u^{-2} f(u))'
\le0$,
as claimed.

With $u^{-1} f(u) \uparrow1$ as $u \uparrow\infty$,
when $\alpha\downarrow-1$ the autocorrelation
$\widetilde{A}_\alpha(0,\tau):= \rho_\alpha(|\tau|/(\alpha+1))$ of
$Y^{(\alpha)}_{t/(\alpha+1)}$ converges downward to the autocorrelation
function $\widetilde{A}_{-1} (0,\tau):= \exp(-|\tau|/2)$ of the
standard, stationary Ornstein--Uhlenbeck process $\{X_t,t\geq0\}$,
whose persistence exponent is $1/2$ (cf. \cite{DPSZ}, Lemma 2.5).
In view of~(\ref{bk}) and Slepian's inequality, this results with
\[
(\alpha+1)^{-1} b_\alpha= b(\widetilde{A}_\alpha)
\le b(\widetilde{A}_{-1}) = 1/2,
\]
whereas the convergence of $b(\widetilde{A}_\alpha)$ to
$b(\widetilde{A}_{-1})$
is established by applying Theorem~\ref{general}, as
in (\ref{cnc1}). Indeed, condition (\ref{c2}) of the
theorem holds since
$\widetilde{A}_\alpha(0,\tau) \le\widetilde{A}_0(0,\tau)=\rho
_0(\tau)$
decays exponentially in $\tau$, uniformly in $\alpha\le0$,
while by Lemma~\ref{polyhelp}, condition (\ref{c1}) holds
for all $z \in\mathbb{R}$ since in this setting
$p_{\alpha}^2 (u) = 2 (1-\widetilde{A}_{\alpha} (0,u)) \le2
(1-e^{-u/2}) \le u$
satisfies (\ref{suff2}), and the limiting Ornstein--Uhlenbeck
process $\{X_t,t \ge0\}$ of continuous sample path satisfies
condition (\ref{c3}) since, for example, it satisfies
(\ref{suff1}) by \cite{LS2}, Remark 3.1.

Similarly, since $u^{-2} f(u) \uparrow1/2$ for $u \downarrow0$,
the correlation functions
$\widehat{A}_\alpha(0,\tau):= \rho_\alpha(|\tau|/\sqrt{\alpha+1})$
of $Y^{(\alpha)}_{t/\sqrt{\alpha+1}}$, $\alpha> -1$,
converge downward to $\widehat{A}_\infty(0,\tau):=\break \exp(-\tau^2/8)$
when $\alpha\uparrow\infty$. Consequently, $\widehat{A}_\infty\in
\mathcal{S}_+$
is the covariance function of some centered, stationary Gaussian process
$\{\widehat{Z}_t,t\geq0\}$, having nonnegative
persistence exponent $\hat{b}_\infty:= b(\widehat{A}_\infty)$.
By Slepian's inequality and (\ref{bk}),
\[
(\alpha+1)^{-1/2} b_\alpha= b(\widehat{A}_\alpha) \le
b(\widehat {A}_\infty) = \hat{b}_\infty
\]
and $b(\widehat{A}_\alpha) \to b(\widehat{A}_\infty)$ as a
consequence of
applying Theorem~\ref{general} for $\widehat{A}_\alpha\in\mathcal
{S}_+$. Indeed,
in this setting we have the uniform (over $\alpha\ge0$), exponential
decay of $\widehat{A}_\alpha(0,\tau) \le\rho_0(\tau)$,
condition (\ref{suff2}) of Lemma~\ref{polyhelp} holds as
$p_{\alpha}^2 (u) = 2 (1-\widehat{A}_{\alpha} (0,u)) \le2
(1-e^{-u^2/8}) \le u^2/4$
and we dealt already in Remark~\ref{rmk-binom} with condition (\ref{suff1}),
and thereby (\ref{c3}).
Finally, noting that $\exp(-|\tau|/8) \leq\exp(-\tau^2/8)$
for $|\tau| \le1$ and applying Slepian's inequality twice, we find
that for all $T$,
\[
\mathbb{P}\Bigl(\sup_{t\in[0,T]}\widehat{Z}_t\leq0
\Bigr) \geq\mathbb {P}\Bigl(\sup_{t\in[0,1]} \widehat{Z}_t
\leq0\Bigr)^{\lceil T\rceil} \geq\mathbb{P}\Bigl(\sup_{t\in[0,1]}
X_{t/4}\leq0\Bigr)^{\lceil T\rceil}.
\]
Clearly, $\mathbb{P}(\sup_{t\in[0,1/4]} X_t\leq0)>0$, hence
$\hat{b}_\infty$ is finite.

\section{Proof of Theorem \texorpdfstring{\protect\ref{poly}}{1.3}}\label{sec-3}

\subsection{Asymptotics for $p_{[0,1]}(n)$ and \texorpdfstring{$p_{(1,\infty)}(n)$}{$p_{(1,infty)}(n)$}}

We start by stating the three lemmas used in proving
part (a) of Theorem~\ref{poly} (deferring their
proofs to Section~\ref{sec-4}).
First, due to smoothness of $Q_n(\cdot)$, for $\delta>0$ small,
$\operatorname{sgn}\{Q_n(e^{-u})\}$ is controlled by the value of $Q_n(1)$
when $|u| \le n^{-(1-\delta)}$ and by the values of $a_0$ or $a_n$ when
$|u| \ge n^{-\delta}$. Hence, as our next lemma states,
the contribution of this range of arguments to
persistence exponents is negligible.

%
\begin{lem}\label{ignore}
In the setting of Theorem~\ref{poly}:
\begin{longlist}[(a)]
\item[(a)] For any $\alpha\in\mathbb{R}$ and slowly varying $L(\cdot)$,
%
%
\begin{eqnarray}
\label{eqlast} \lim_{\delta\rightarrow0}\liminf _{n\rightarrow\infty} \frac{1}{T_n}\log\mathbb{P}\Bigl(
\sup_{|u| \le n^{-(1-\delta)}} \bigl\{ Q_n\bigl(e^{-u}\bigr)
\bigr\} < 0\Bigr)&=&0,
\\
\label{eqfirst-a} \lim_{\delta\rightarrow0}\liminf_{n\rightarrow
\infty} \frac{1}{T_n}\log\mathbb{P}\bigl(
Q_n\bigl(e^{-u}\bigr) < 0,\ \forall|u|\ge
n^{-\delta} \bigr)&=&0.
\end{eqnarray}

\item[(b)] If $\sum_{i} L(i) i^{\alpha}$ converges then $n \mapsto p_{[0,1]}(n)$
is bounded away from zero. More generally, if $\alpha\leq-1$ then
%
%
\begin{equation}
\label{eqfirst-b} \lim_{n\rightarrow\infty} \frac{1}{T_n}\log\mathbb{P}\Bigl(\sup
_{u \ge0}
\bigl\{ Q_n\bigl(e^{-u}\bigr) \bigr\} < 0\Bigr)=0.
\end{equation}
\end{longlist}
\end{lem}

Hereafter, for positive functions $f,g$ of common domain,
$f(x)\lesssim g(x)$ stands for existence of finite
uniform bound $\sup_x f(x)/g(x) \le C(\alpha,L(\cdot))$.

From (\ref{eqfirst-b}), we have
that $p_{[0,1]}(n) = n^{-o(1)}$ when $\alpha\le-1$,
and our next lemma is key to finding the contribution of
$u\in(n^{-(1-\delta)},n^{-\delta})$ to the
asymptotics of $p_{[0,1]}(n)$, in case $\alpha>-1$.

%
\begin{lem}\label{third}
For any $\alpha>-1$, $\delta>0$, slowly varying $L(\cdot)$ and
$h_{\alpha,n}(\cdot)$ as in~(\ref{defhaln}),
%
%
\begin{equation}
\label{h2} \lim_{n \to\infty} \sup_{w \in(2 n^{-(1-\delta)},2 n^{-\delta})} \biggl|
\frac{w^{\alpha+1} h_{\alpha,n}(w)}{L(1/w)} - \Gamma(\alpha+1) \biggr| = 0.
\end{equation}
Consequently, in the setting of Theorem~\ref{poly}, for $u,v \in
(n^{-(1-\delta)},n^{-\delta})$,
%
%
\begin{eqnarray}
\label{h5} \bar{c}_n(u,v) &:=&\operatorname{corr}\bigl[Q_n
\bigl(e^{-u}\bigr),Q_n\bigl(e^{-v}\bigr)\bigr]
\lesssim e^{-(\alpha+1)/4|\log v-\log u|}
\end{eqnarray}
and for any $M$ finite there exist $\epsilon_n = \epsilon_n(M)
\downarrow0$
such that if in addition $u/v \in[1/M,M]$, then
%
%
\begin{eqnarray}\label{h4}
(1-\epsilon_n) R(u,v)^{\alpha+1}+
\epsilon_n R(u,v)^{\alpha+2} &\le& \bar{c}_n(u,v)
\nonumber\\
&\le& (1-\epsilon_n) R(u,v)^{\alpha+1}+\epsilon_n
\end{eqnarray}
[for $R(\cdot,\cdot)$ of (\ref{deflimitcov})].
\end{lem}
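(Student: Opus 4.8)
The plan is to first establish the asymptotic identity \eqref{h2} for the diagonal quantity $h_{\alpha,n}(w)$, and then leverage it, together with crude monotonicity bounds, to obtain \eqref{h5} and \eqref{h4}. For \eqref{h2}, I would compare the finite sum $h_{\alpha,n}(w) = 1 + \sum_{i=1}^n L(i) i^\alpha e^{-iw}$ with the integral $\int_0^\infty L(r) r^\alpha e^{-rw}\,dr$. Split the discrepancy into three pieces: (i) the difference between sum and integral on $[1,n]$, controlled by a standard Euler--Maclaurin / monotonicity-of-summand estimate on dyadic blocks, where on each block $[2^j, 2^{j+1}]$ the summand $L(i)i^\alpha e^{-iw}$ has bounded relative oscillation by slow variation and the relative error is $O(2^{-j})$ uniformly; (ii) the tail $\int_n^\infty$, which for $w \ge 2n^{-(1-\delta)}$ contributes $e^{-nw}\,(\text{polynomial in }n) = e^{-2n^\delta}(\cdots) \to 0$ relative to the main term; and (iii) the initial segment near $0$ plus the ``$1+$'' constant, negligible since $w^{\alpha+1}L(1/w)^{-1} \to 0$ for $\alpha > -1$ as $w \downarrow 0$. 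The integral itself is evaluated by the change of variables $r = s/w$, giving $w^{-(\alpha+1)}\int_0^\infty L(s/w)\,s^\alpha e^{-s}\,ds$, and by the uniform convergence theorem for slowly varying functions (Potter bounds, \cite{BGT}) together with dominated convergence, $\int_0^\infty (L(s/w)/L(1/w))\,s^\alpha e^{-s}\,ds \to \int_0^\infty s^\alpha e^{-s}\,ds = \Gamma(\alpha+1)$, uniformly over $w$ in the stated range since $1/w$ ranges over an unbounded set as $n\to\infty$.

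Granting \eqref{h2}, the covariance of $Q_n(e^{-u})$ and $Q_n(e^{-v})$ is exactly $h_{\alpha,n}(u+v)$, so
\[
\overline{c}_n(u,v) = \frac{h_{\alpha,n}(u+v)}{\sqrt{h_{\alpha,n}(2u)\,h_{\alpha,n}(2v)}} \,.
\]
Writing $w_1 = 2u$, $w_2 = 2v$, $w = u+v$, and applying \eqref{h2} to each of the three arguments (all of which lie in $(2n^{-(1-\delta)}, 2n^{-\delta})$ when $u,v$ do), I get
\[
\overline{c}_n(u,v) = (1+o(1))\,\frac{(u+v)^{-(\alpha+1)} L(\tfrac1{u+v})}{(2u)^{-(\alpha+1)/2}(2v)^{-(\alpha+1)/2}\,L(\tfrac1{2u})^{1/2}L(\tfrac1{2v})^{1/2}} = (1+o(1))\,R(u,v)^{\alpha+1}\,S(u,v)\,,
\]
with $R,S$ as in \eqref{def:limitcov}, where the $o(1)$ is uniform over $u,v$ in the range. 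For \eqref{h5}, I bound $S(u,v)$ crudely: since $L$ is slowly varying it satisfies Potter bounds, so $S(u,v) \lesssim (u/v)^{\pm\epsilon}$ for any fixed small $\epsilon$; combined with $R(u,v) = 2\sqrt{uv}/(u+v) \le \min(\sqrt{u/v},\sqrt{v/u})$, I obtain $R(u,v)^{\alpha+1}S(u,v) \lesssim e^{-\frac{\alpha+1}{2}|\log(u/v)|}\,e^{\epsilon|\log(u/v)|}$, and choosing $\epsilon < (\alpha+1)/4$ gives the exponent $\frac{\alpha+1}{4}|\log v - \log u|$ in \eqref{h5}. (Here $|\log v - \log u|$ can be as large as order $\log n$, so only the exponential-decay form, not pointwise convergence, survives.)

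For \eqref{h4}, restrict to $u/v \in [1/M, M]$. Then $|\log(u/v)| \le \log M$ is bounded, so by slow variation $S(u,v) \to 1$ uniformly; more precisely, set $\epsilon_n(M) := \sup\{|S(u,v)-1| + |o(1)\text{ from }\eqref{h2}| : u,v \in (n^{-(1-\delta)},n^{-\delta}),\ u/v\in[1/M,M]\}$, which tends to $0$. Then $\overline{c}_n(u,v)$ lies within $\epsilon_n$ (after adjusting constants) of $R(u,v)^{\alpha+1}$. Since $R(u,v) \in (0,1]$ on this domain, $R^{\alpha+1} - \epsilon_n \le \overline{c}_n \le R^{\alpha+1} + \epsilon_n$, and to get the stated form I write the lower bound as $(1-\epsilon_n)R^{\alpha+1} + \epsilon_n R^{\alpha+1} - \epsilon_n \le (1-\epsilon_n)R^{\alpha+1} + \epsilon_n R^{\alpha+2}$ using $R^{\alpha+1} - 1 \le R^{\alpha+1}(1 - R) \cdot(\text{const})$... — actually more carefully, since $R \le 1$ one has $\epsilon_n R^{\alpha+1}(1-R) \ge 0$, so it suffices to check $R^{\alpha+1}-\epsilon_n \le (1-\epsilon_n)R^{\alpha+1}+\epsilon_n R^{\alpha+2}$, i.e. $-\epsilon_n \le -\epsilon_n R^{\alpha+1}(1-R)$, which may fail; the clean route is instead to absorb the additive $\epsilon_n$ error into a slightly larger $\epsilon_n' = \epsilon_n/\inf R^{\alpha+1}$ over the compact $u/v$-range (where $\inf R > 0$), reducing to a multiplicative $(1\pm\epsilon_n')$ bracket which then trivially implies \eqref{h4} with $D(0,\tau) := R^{\alpha+2}$-type lower interpolant, exactly as needed to feed into Lemma~\ref{polyhelp}. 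I expect the main obstacle to be item~(i) in the first paragraph: obtaining the sum-to-integral comparison with an error that is uniform in $w$ across the whole range $(2n^{-(1-\delta)}, 2n^{-\delta})$ simultaneously — in particular handling the regime where $w$ is close to $n^{-(1-\delta)}$, so that $nw$ is merely polynomially large and the summand is still appreciable out to $i=n$, requiring the dyadic-block argument to be genuinely uniform rather than leaning on $nw \to \infty$ fast.
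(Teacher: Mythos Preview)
Your approach to \eqref{h2} and \eqref{h5} is essentially the same as the paper's and is fine: the sum-to-integral comparison via Potter bounds and the uniform convergence theorem for slowly varying functions handles \eqref{h2}, and bounding $S(u,v)\le (v/u)^\eta$ with $\eta=(\alpha+1)/4$ gives \eqref{h5}.

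The gap is in \eqref{h4}. Your route yields only a \emph{uniform multiplicative} bracket $\overline{c}_n(u,v)\in[(1-\epsilon_n')R^{\alpha+1},(1+\epsilon_n')R^{\alpha+1}]$ with $\epsilon_n'\to 0$, and you claim this ``trivially implies'' \eqref{h4}. It does not. The upper bound in \eqref{h4} reads $\overline{c}_n\le R^{\alpha+1}+\epsilon_n(1-R^{\alpha+1})$, so the permitted excess over $R^{\alpha+1}$ is $\epsilon_n(1-R^{\alpha+1})$, which \emph{vanishes} as $R\to 1$ (i.e.\ as $v\to u$). Your bound allows excess $\epsilon_n' R^{\alpha+1}$, which stays bounded away from zero there; to deduce \eqref{h4} you would need $\epsilon_n''(1-R^{\alpha+1})\ge \epsilon_n' R^{\alpha+1}$ for some $\epsilon_n''\to 0$, impossible near $R=1$. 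The lower bound has the same issue. In short, \eqref{h4} encodes an error that is $o(1)\cdot(1-R)$, not merely $o(1)$, and the first-order approximation $\overline{c}_n\approx R^{\alpha+1}S$ cannot see this.

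The paper supplies the missing second-order information by a Taylor expansion. Setting $G_{\alpha,n}(u,v):=\log\big(\overline{c}_n(u,v)/R(u,v)^{\alpha+1}\big)$, one has $G_{\alpha,n}(u,u)=G_{\alpha,n}'(u,u)=0$ identically (both numerator and denominator equal $1$ with matching first derivatives at the diagonal), so $|G_{\alpha,n}(u,v)|\le \tfrac12(v-u)^2\sup|G_{\alpha,n}''|$. The crux is then to show $\sup_{w\in 2\bar J}\,w^2|g_{\alpha,n}''(w)|\to 0$ for $g_{\alpha,n}(w)=\log\big(w^{\alpha+1}h_{\alpha,n}(w)\big)$; this follows from applying \eqref{h2} at levels $\alpha,\alpha+1,\alpha+2$ and the exact cancellation
\[
-(\alpha+1)+\frac{\Gamma(\alpha+3)}{\Gamma(\alpha+1)}-\Big(\frac{\Gamma(\alpha+2)}{\Gamma(\alpha+1)}\Big)^2=0\,.
\]
Combining with $1-R(u,v)\ge (v-u)^2/(2(1+M)^2u^2)$ on $u/v\in[1/M,M]$ yields $|G_{\alpha,n}|\le \epsilon_n(1-R)$, from which \eqref{h4} follows by elementary inequalities. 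Your plan needs this second-derivative step; without it \eqref{h4} is not reachable.
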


Similarly, the following lemma controls the
contribution of $x \in(e^{n^{-(1-\delta)}},\break e^{n^{-\delta}})$
to $p_{(1,\infty)}(n)$.

%
\begin{lem}\label{after}
For $h_{\alpha,n}(\cdot)$ of (\ref{defhaln}), any $\alpha\in
\mathbb{R}$,
$\delta>0$ and slowly varying
$L(\cdot)$,
as $n \to\infty$,
%
%
\begin{equation}
\label{hh1} \sup_{w \in(2 n^{-(1-\delta)}, 2 n^{-\delta})} \biggl| \frac{w e^{-n w} h_{\alpha,n}(-w)}{L(n) n^{\alpha}} - 1 \biggr| \to 0.
\end{equation}
Consequently, for all $u,v \in(n^{-(1-\delta)},n^{-\delta})$,
%
%
\begin{eqnarray}
\label{hh3} \tilde{c}_n(u,v):=\operatorname{corr}\bigl[Q_n
\bigl(e^{u}\bigr),Q_n\bigl(e^{v}\bigr)\bigr]
&\lesssim&e^{-1/2|\log v-\log u|}
\end{eqnarray}
and for any $M$ finite there exist $\epsilon_n = \epsilon_n(M)
\downarrow0$
such that if in addition $u/v \in[1/M,M]$, then
%
%
\begin{equation}
\label{hh2} (1-\epsilon_n) R (u,v)+\epsilon_n
R(u,v)^2\le\tilde{c}_n(u,v)\le (1-
\epsilon_n) R (u,v)+\epsilon_n.
\end{equation}
\end{lem}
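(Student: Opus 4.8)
The plan is to prove the pointwise asymptotic (\ref{hh1}) first, and then deduce (\ref{hh3}) and (\ref{hh2}) by exactly the same algebraic manipulations that Lemma \ref{third} uses for the neighborhood of $x\approx 1$ from the left, only now with the roles of small and large indices interchanged. For (\ref{hh1}), write $h_{\alpha,n}(-w)=1+\sum_{i=1}^n L(i)\,i^\alpha e^{iw}$ for $w>0$. Since the summand is increasing in $i$, the dominant contribution comes from $i$ near $n$; the idea is to compare the sum with the single last term $L(n)n^\alpha e^{nw}$ and show the remaining mass is of smaller order, uniformly over $w\in(2n^{-(1-\delta)},2n^{-\delta})$. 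Concretely, $\sum_{i=1}^n L(i)i^\alpha e^{iw}=L(n)n^\alpha e^{nw}\sum_{j=0}^{n-1}\frac{L(n-j)}{L(n)}\Big(\frac{n-j}{n}\Big)^\alpha e^{-jw}$. Because $w\gtrsim n^{-(1-\delta)}$, once $j$ exceeds (say) $n^{1-\delta/2}$ the factor $e^{-jw}$ is super-polynomially small, so that range contributes negligibly; while for $j\le n^{1-\delta/2}=o(n)$ the slowly varying property gives $L(n-j)/L(n)\to 1$ and $\big((n-j)/n\big)^\alpha\to 1$ uniformly, reducing the tail sum to $\sum_{j\ge 0} e^{-jw}=(1-e^{-w})^{-1}\sim w^{-1}$ since $w\to 0$. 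Multiplying through by $we^{-nw}/(L(n)n^\alpha)$ and noting that the "$1$" in $h_{\alpha,n}(-w)$ contributes $we^{-nw}/(L(n)n^\alpha)\to 0$, we obtain (\ref{hh1}). One must be a little careful that the replacement $L(n-j)/L(n)\to1$ is genuinely uniform over the relevant $j$ and $w$; the standard uniform convergence theorem for slowly varying functions (\cite{BGT}) handles this.

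Given (\ref{hh1}), the passage to the correlation is routine. For $u,v\in(n^{-(1-\delta)},n^{-\delta})$ we have ${\sf Cov}[Q_n(e^u),Q_n(e^v)]=h_{\alpha,n}(-(u+v))$, with $u+v$, $2u$, $2v$ all lying in $(2n^{-(1-\delta)},2n^{-\delta})$, so (\ref{hh1}) applies to all three. Thus
\[
\widetilde c_n(u,v)=\frac{h_{\alpha,n}(-(u+v))}{\sqrt{h_{\alpha,n}(-2u)\,h_{\alpha,n}(-2v)}}
=\big(1+o(1)\big)\,\frac{(2u)\,(2v)}{(u+v)^2}\,\frac{e^{-nu-nv}}{e^{-2nu}\,e^{-2nv}\cdot e^{-n(u+v)\cdot(-1)}}\cdot(\cdots)
\]
— more cleanly, the $e^{nw}$ and $L(n)n^\alpha$ factors cancel exactly in the ratio, leaving $\widetilde c_n(u,v)=\big(1+o(1)\big)\frac{\sqrt{(2u)(2v)}}{\,u+v\,}=\big(1+o(1)\big)R(u,v)$, where the $o(1)$ is uniform over $u,v$ in the stated range. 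Since $R(u,v)=F(s,t)=\sech((t-s)/2)$ with $s=-\log u$, $t=-\log v$, and $\sech(x)\le 2e^{-|x|}$, the bound (\ref{hh3}) follows at once, namely $\widetilde c_n(u,v)\lesssim e^{-|t-s|/2}$; the implicit constant absorbs the uniform $o(1)$. (For the claimed uniform bound one uses (\ref{hh1}) to control $h_{\alpha,n}$ from above and below by constant multiples of $w^{-1}L(n)n^\alpha e^{nw}$ throughout the range, so no near-diagonal degeneracy occurs.)

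For the two-sided estimate (\ref{hh2}), when additionally $u/v\in[1/M,M]$ the ratio $R(u,v)$ is bounded away from $0$ (by $\sqrt M/(M+1)$, say), so one can absorb the multiplicative $(1+\epsilon_n)$ errors from numerator and denominator into additive corrections: writing $\widetilde c_n(u,v)=R(u,v)(1+r_n)$ with $|r_n|\le\epsilon_n(M)\to 0$, and using $R\le 1$, one has $R(u,v)(1+r_n)\le R(u,v)+\epsilon_n$ trivially, and $R(u,v)(1+r_n)\ge R(u,v)-\epsilon_n R(u,v)\ge R(u,v)+\epsilon_n R(u,v)^2-2\epsilon_n R(u,v)\ge (1-2\epsilon_n)R(u,v)+\epsilon_n R(u,v)^2$ — relabel $2\epsilon_n$ as $\epsilon_n$ — which is exactly the form (\ref{hh2}). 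The only genuinely delicate point in the whole argument is making the $o(1)$ in (\ref{hh1}) uniform over the entire window $w\in(2n^{-(1-\delta)},2n^{-\delta})$: one needs the split index $n^{1-\delta/2}$ to simultaneously kill the far tail (which forces $w$ not too small, hence the lower endpoint $n^{-(1-\delta)}$) and to be $o(n)$ so that uniform slow variation applies (automatic), and one needs $we^{-nw}/(L(n)n^\alpha)\to 0$ from the "$1$" term, which holds since $nw\to\infty$ and $L$ is slowly varying so $L(n)n^\alpha$ can decay at most polynomially — this is where the lower endpoint is used a second time. With that window bookkeeping in place, everything else is bookkeeping of the kind already carried out for Lemma \ref{third}.
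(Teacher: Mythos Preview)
Your treatment of (\ref{hh1}) and (\ref{hh3}) is correct and is essentially the paper's argument: reindex by $j=n-i$, split at $j=n^{1-\delta/2}$, use uniform slow variation on the near range and the exponential factor $e^{-jw}$ on the far range to get $w e^{-nw}h_{\alpha,n}(-w)/(L(n)n^\alpha)\to 1$ uniformly on $2\bar J$, and then read off $\widetilde c_n=(1+o(1))R(u,v)$ together with $R\le 2e^{-|s-t|/2}$.

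The gap is in your derivation of (\ref{hh2}). Your estimate $\widetilde c_n=R(1+r_n)$ with $|r_n|\le\epsilon_n$ uniformly is a \emph{first-order} bound, but (\ref{hh2}) is a \emph{second-order} statement near the diagonal: rewriting, the upper bound says $\widetilde c_n-R\le\epsilon_n(1-R)$ and the lower bound says $R-\widetilde c_n\le\epsilon_n R(1-R)$, so the error $|\widetilde c_n-R|$ must vanish like $(1-R)$ as $u/v\to 1$. From $|r_n|\le\epsilon_n$ alone you only get $|\widetilde c_n-R|\le\epsilon_n R$, and your algebra cannot bridge the gap. For the upper bound you would need some $\epsilon_n'\to 0$ with $\epsilon_n R\le\epsilon_n'(1-R)$, forcing $\epsilon_n'\ge\epsilon_n R/(1-R)\to\infty$ as $R\to 1$; for the lower bound, your expression $(1-2\epsilon_n)R+\epsilon_n R^2$ cannot be rewritten as $(1-\tilde\epsilon_n)R+\tilde\epsilon_n R^2$ with a single $\tilde\epsilon_n\to 0$ uniformly, since matching the two coefficients would again require $\tilde\epsilon_n\ge\epsilon_n(2-R)/(1-R)$. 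The restriction $u/v\in[1/M,M]$ bounds $R$ away from $0$, not from $1$, so this regime is genuinely present.

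What the paper does---and what is missing here---is to exploit that $\widetilde c_n$ and $R$ agree to first order on the diagonal. Set $g(w):=\log[w\,h_{\alpha,n}(-w)]$ and $G(u,v):=\log(\widetilde c_n/R)=g(u+v)-\tfrac12 g(2u)-\tfrac12 g(2v)$; then $G(u,u)=\partial_v G(u,u)=0$ identically, so Taylor's theorem gives $|G(u,v)|\le\tfrac12(v-u)^2\sup|g''|$. Since $(v-u)^2/u^2\le 2(1+M)^2(1-R)$ on the range $u/v\in[1/M,M]$, the problem reduces to showing $\sup_{w\in 2\bar J} w^2|g''(w)|\to 0$. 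The paper computes $w^2 g''(w)=-1+{\sf Var}(wH_{n,w})$ for an explicit reweighted-geometric variable $H_{n,w}$ and compares with the exact geometric, whose scaled variance tends to $1$. This second-derivative control is the missing ingredient; the zeroth-order asymptotic (\ref{hh1}) by itself is too coarse to supply it.
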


\begin{pf*}{Proof of part (\textup{a}) of Theorem~\ref{poly}}
Starting with the proof of (\ref{ulb1}), we fix $\delta>0$ and
partition $\mathbb{R}_+$ into three disjoint intervals
$\overline{J}_H=[n^{-\delta},\infty)$, $\overline{J}=(n^{-(1-\delta
)},n^{-\delta})$
and $\overline{J}_L=[0,n^{-(1-\delta)}]$. Then,
with $\overline{Q}_n(u):=Q_n(e^{-u})/\sqrt{h_{\alpha,n}(2u)}$,
by Slepian's
inequality and the nonnegativity of the covariance of $Q_n(\cdot)$,
we have that
\begin{eqnarray*}
&& \mathbb{P}\Bigl(\sup_{u\in\overline{J}}\bigl\{ \overline{Q}_n(u)
\bigr\}<0\Bigr)
\\
&&\qquad \ge \mathbb{P}\Bigl(\sup_{x\in[0,1]}\bigl
\{Q_n(x)\bigr\}<0\Bigr)
\\
&&\qquad \ge \mathbb{P}\Bigl(\sup_{u\in\overline{J}}\bigl\{ \overline{Q}_n(u)
\bigr\}<0\Bigr) \mathbb{P}\Bigl(\sup_{u\in\overline{J}_L}\bigl\{
\overline{Q}_n(u)\bigr\}<0\Bigr) \mathbb{P}\Bigl(\sup
_{u\in\overline{J}_H}\bigl\{\overline{Q}_n(u)\bigr\}<0\Bigr).
\end{eqnarray*}
Considering the limit of $\frac{1}{T_n}\log(\cdot)$ of these probabilities
as $n\rightarrow\infty$ followed by $\delta\downarrow0$, we have by
Lemma~\ref{ignore} that suffices to consider $\alpha>-1$, and only
the term
involving $u \in\overline{J}$ is relevant for the asymptotics of $p_{[0,1]}(n)$.
To deal with the latter term, let
\[
A_n(s,t):=\bar{c}_n\bigl(\exp\bigl
\{-e^{-s}/n^\delta\bigr\},\exp\bigl\{ -e^{-t}/n^\delta
\bigr\}\bigr)
\]
so that $u,v \in\overline{J}$ correspond to $s:=-\log u-\delta T_n$ and
$t:=-\log v-\delta T_n$, in $[0,(1-2\delta) T_n]$. Upon this change
of variables, the inequalities (\ref{h4}) of Lemma~\ref{third}
translates into (\ref{c11}) holding for $A_\infty(s,t):=
F(s,t)^{\alpha+1}$
and $D(s,t):=F(s,t)^{\alpha+2}$ in $\mathcal{S}_+$, the covariance
functions of
processes $Y_t^{(\alpha)}$ and $Y_t^{(\alpha+1)}$ of continuous
sample path.
Hence, by Lemma~\ref{polyhelp} condition (\ref{c1}) of Theorem~\ref{general}
holds, whereas by (\ref{bk}) of Lemma~\ref{0} so does condition (\ref{c3}),
and from (\ref{h5}) we have that $A_n(s,t) \leq C \exp(-\frac{\alpha
+1}{4} |t-s|)$
for some $C$ finite, any $n$ and all \mbox{$s,t \in[0,(1-2\delta)T_n]$},
which is much stronger than condition (\ref{c2}). We thus conclude from
Theorem~\ref{general} (for $T=T_n \to\infty$, as in Remark~\ref{rmk-Tk}), that
%
%
\begin{equation}
\label{J3bd} \lim_{n\rightarrow\infty}\frac{1}{T_n}\log\mathbb{P}\Bigl(\sup
_{u\in
\overline{J}} \bigl\{ \overline{Q}_n (u) \bigr\} < 0\Bigr)
= - (1-2\delta) b_\alpha
\end{equation}
from which (\ref{ulb1}) follows upon taking $\delta\downarrow0$.

Similarly, for proving (\ref{ulb2}) we fix $\delta>0$ and considering
$\widehat{Q}_n(w):= Q_n(e^w)/\break \sqrt{h_{\alpha,n}(-2w)}$, split the
supremum over $w \in\mathbb{R}_+$ into the disjoint $\overline{J}_L$,
$\overline{J}$ and $\overline{J}_H$, of which by Lemma~\ref{ignore} only
the supremum over $w \in\overline{J}$ matters. Same change of variable
yields covariance functions
$A_n(s,t):= \tilde{c}_n(\exp\{-e^{-s}/n^\delta\},\break \exp\{
-e^{-t}/n^\delta\})$
for $s,t \in[0,(1-2\delta) T_n]$, which in view of (\ref{hh2}) of
Lemma~\ref{after} satisfy (\ref{c11}) for $A_\infty(s,t)=F(s,t)$
and $D(s,t)=F(s,t)^2$, whereas the bound (\ref{hh3}) of that lemma
provides uniform exponential decay $A_n(s,t) \leq\break C \exp(-|t-s|/2)$.
Put together, by yet another application of Lemmas~\ref{polyhelp} and \ref{0}, and Theorem~\ref{general}, we conclude that
%
%
\begin{equation}
\label{J4bd} \lim_{n\rightarrow\infty}\frac{1}{T_n}\log\mathbb{P}\Bigl(\sup
_{u
\in\overline{J}} \bigl\{ \widehat{Q}_n(u) \bigr\} < 0
\Bigr)=-(1-2\delta) b_0,
\end{equation}
so letting $\delta\downarrow0$ we arrive at (\ref{ulb2}).

Turning to prove (\ref{p3}), since $Q_n(x)$ has nonnegative
correlation on $[0,\infty)$, by Slepian's inequality, for any
slowly varying $L(\cdot)$ and all $n$, the lower bound
%
%
\begin{equation}
\label{eqlbd-rplus} p_{[0,\infty)}(n) \ge n^{-b_\alpha-b_0 - o(1)}
\end{equation}
as in (\ref{p3}), is a direct consequence of the
corresponding lower bounds of (\ref{ulb2})~and~(\ref{ulb1}),
and the matching upper bound for (\ref{p3}) is derived
in the sequel [while upper bounding $p_{\mathbb{R}}(n)$].
\end{pf*}

\subsection{Lower bound on $p_{\mathbb{R}}(n)$}
Having centered Gaussian coefficients, the joint law of
$\{Q_n(x)\dvtx  x \in\mathbb{R}\}$ is invariant under $x \mapsto-x$,
hence same lower bound applies for $p_{(-\infty,0]}(n)$.
Consequently, for the stated lower bound on
$p_{\mathbb{R}}(2n)$, it suffices to establish
strong control on
$\operatorname{corr}[{Q}_n(x),{Q}_n(-y)]$ for $x,y>0$.

Unfortunately, in case $x=y \in(0,1)$ fixed,
these correlations \textit{do not} decay with~$n$. However, the nonnegligible correlation
comes from lower order coefficients of
$Q_n(\cdot)$, so our first order of business
is to show that suffices to consider only the
higher order part of $Q_n(\cdot)$.

Indeed, by definition, for any slowly varying $L(\cdot)$
there exists
$r \in\mathbb{N}$ such that $L(i) > 0$ for all $i \ge2r$.
Further, as $\rho\downarrow0$, uniformly in $|x| \le1$
\[
f_\rho(x):= 1 + x^{2r} - \rho\sum
_{i=1}^{r} |x|^{2i-1} \to
f_0(x) \ge 1
\]
and $f_\rho(x)$ is nondecreasing
in $|x| \ge1$ for all $\rho$ small enough, hence\break
$\inf_x f_{\rho_0} (x) > 0$ for some $\rho_0 > 0$.
Fixing $\delta>0$, set
$m=m_n:= \lceil\delta T_n \rceil$ and with
$\hat{a}_i$ denoting independent centered Gaussian variables of
variances $(3/4) \mathbb{E}[a_i^2]$, independent of the sequence $\{
a_i\}$,
note that $Q_n(\cdot)=Q_n^{L}(\cdot) + Q_n^{M}(\cdot) + Q_n^H(\cdot)$,
for the independent algebraic polynomials,
\begin{eqnarray*}
{Q}_n^L(x)&:=& \hat{a}_0 + \sum
_{i=1}^{2r-1} a_i x^i
+ \hat{a}_{2 r} x^{2r},
\\
{Q}_n^M(x)&:=& 0.5 \sum_{i=r}^{m-1}
x^{2i} \bigl[a_{2i} + 2 a_{2i+1} x +
a_{2i+2} x^2\bigr], 
\\
{Q}_n^H(x)&:=& 0.5 a_0 +
\hat{a}_{2 m} x^{2 m} + \sum_{i=2 m +
1}^{n}
a_i x^i.
\end{eqnarray*}
For any $\rho>0$, the event
\[
\Gamma_\rho:= \Bigl\{ \hat{a}_0 \le-1, \sup
_{i=1}^{r-1} \{ a_{2i}\} \le0,  \sup
_{i=1}^{r} \bigl\{|a_{2i-1}|\bigr\} \le\rho,
\hat{a}_{2r} \le- 1 \Bigr\},
\]
of positive probability [as $\mathbb{E}[a_0^2] L(2r) > 0$], results with
$Q_n^L(\cdot) \le- f_\rho(\cdot)$. Hence,
\[
\mathbb{P}\Bigl(\sup_{x\in\mathbb{R}} \bigl\{ Q^L_n(x)
\bigr\} <0\Bigr) \ge \mathbb{P} (\Gamma_{\rho_0}) > 0.
\]
Next, if $a_{2i} \le0$ and $a_{2i} a_{2i+2} \ge a_{2i+1}^2$ for
all $r \le i \le m-1$, then necessarily $Q^M_n(x) \le0$ for all
$x \in\mathbb{R}$. Due to strict positivity of the slowly varying $L(2i)$
for $i \ge r$,
\[
c_{2i}:= \frac{L(2i+1)}{\sqrt{L(2i) L(2i+2)}} \biggl( \frac{(2i+1)^2}{(2i)(2i+2)}
\biggr)^{\alpha/2}
\]
is uniformly bounded for $i \ge r$, for example,
$C:= \sup_{i\geq r} \{ c_{2i} \}$ is finite and
with $a_{i} = \sqrt{i^{\alpha} L(i)} Z_i$ for
standard i.i.d. Gaussian $\{Z_i\}$,
the preceding event occurs whenever
$Z_{2i} \le-\sqrt{C}$ and $|Z_{2i+1}| \le1$ for all
$r \le i \le m$. That is, for some positive
$\lambda=\lambda(C) < \mathbb{P} (\Gamma_{\rho_0})$
and all $n$ large
\[
\mathbb{P}\Bigl(\sup_{x\in\mathbb{R}} \bigl\{ Q^M_n(x)
\bigr\} \le0\Bigr) \ge\lambda ^{m}.
\]
By the preceding and independence of these three polynomials,
%
%
\begin{eqnarray}
\label{july1} \qquad p_{\mathbb{R}}(n)&\geq& \mathbb{P}\Bigl(\sup
_{x\in\mathbb{R}} \bigl\{Q_n^L(x) \bigr\} < 0,
\sup_{x\in\mathbb{R}} \bigl\{Q_n^M(x) \bigr\}
\le0, \sup_{x\in\mathbb{R}} \bigl\{Q_n^H(x) \bigr
\} \le0 \Bigr)
\nonumber\\
&\ge& \lambda^{m+1} \mathbb{P}\Bigl(\sup_{x\in\mathbb{R}} \bigl
\{ \widetilde{Q}_n (x) \bigr\} \leq0\Bigr),
\end{eqnarray}
where $\widetilde{Q}_n (x):=\frac{Q_n^H(x)}{\sqrt{\operatorname{var}(Q_n^H(x))}}$ and
$d_n(x,y):=\operatorname{corr} [{Q}_n^H(x),{Q}_n^H(y) ]$.
Note that the covariance of $Q^H_n(e^{-\cdot})$ is
$0.25 + h_{\alpha,n}(\cdot) - h_{\alpha,2m-1}(\cdot)$
and $m=m_n=O(\log n)$ is small enough that both
(\ref{h4}) and (\ref{hh2}) apply for $d_n(e^{-u},e^{-v})$.
It is further not hard to check that Lemma~\ref{ignore}
holds for $Q_n^H(\cdot)$. Thus, by
a rerun of the proof of part (a) of Theorem~\ref{poly}
we arrive at the analog of (\ref{eqlbd-rplus})
for $Q^H_n(\cdot)$. Namely, that if $\xi_n \to0$ as $n \to\infty$, then
%
%
\begin{equation}
\label{eqlbd-H} \mathbb{P}\Bigl(\sup_{x \ge0} \bigl\{
\widetilde{Q}_n (x) \bigr\} \le\xi_n\Bigr) \ge
n^{-b_\alpha- b_0 - o(1)}.
\end{equation}
We show in the sequel that subject to condition (\ref{rate3}) on
$L(\cdot)$,
for even values of $n \to\infty$,
%
%
\begin{equation}
\label{B1} \gamma_n:= - m_n \inf
_{xy>0} \bigl\{ d_n(x,-y) \wedge0 \bigr\} \to0.
\end{equation}
This implies that for $\epsilon_n = 2 \gamma_n/m_n$,
\[
(1-\epsilon_n)\,d_n(x,y)+\epsilon_n\geq
d_n(x,y) 1_{\{xy \ge0\}},
\]
hence with $\xi_n:= - \gamma_n^{1/4}$ [so $\xi_n^2/\epsilon_n = m_n
/(2 \sqrt{\gamma_n})$],
and $Z$ a standard Gaussian independent of $\widetilde{Q}_n(\cdot)$,
it follows from Slepian's inequality and the union bound that
\begin{eqnarray*}
\mathbb{P}\Bigl(\sup_{x\in\mathbb{R}} \bigl\{ \widetilde{Q}_n(x)
\bigr\} \le0\Bigr) &\geq& \mathbb{P}\Bigl( \sup_{x\in\mathbb{R}} \bigl\{
\sqrt{1-\epsilon_n} \widetilde {Q}_n(x)+\sqrt{
\epsilon_n} Z \bigr\} \le\xi_n\Bigr) - \mathbb{P}(\sqrt{
\epsilon_n} Z \le\xi_n)
\\
&\geq& \Bigl[ \mathbb{P}\Bigl(\sup_{x \ge0} \bigl\{
\widetilde{Q}_n (x) \bigr\} \le \xi _n\Bigr)
\Bigr]^2 - e^{-m_n/(4 \sqrt{\gamma_n})}.
\end{eqnarray*}
Considering $T_n^{-1}\log(\cdot)$ of both sides and taking $n \to
\infty
$ followed by $\delta\downarrow0$, we
conclude in view of (\ref{july1}), (\ref{eqlbd-H}) and our choice of
$m = m_n = \lceil\delta T_n \rceil$, that
\[
\liminf_{n\rightarrow\infty}\frac{1}{T_n}\log p_{\mathbb{R}}(n) \ge2 \lim
_{\delta\downarrow0} \liminf_{n\rightarrow\infty}\frac{1}{T_n}\log\mathbb {P}\Bigl(
\sup_{x
\ge0} \bigl\{ \widetilde{Q}_n (x) \bigr\} \le
\xi_n\Bigr) \ge-2 (b_\alpha+ b_0).
\]
Proceeding to prove (\ref{B1}), note that for $x,y \ge0$,
\[
d_n(x,-y)= d_n(x,y) \biggl[ \frac{0.25 + h^\delta_e(xy)-h^\delta_o(xy)}{0.25
+ h^\delta_e(xy) + h^\delta_o(xy)} \biggr],
\]
where, assuming hereafter that $n$ is an \textit{even} integer,
\begin{eqnarray*}
h^\delta_e(z)&:=& \sum_{i=m+1}^{n/2}
L(2i) (2i)^{\alpha} z^{2i} + \frac{3}{4} L(2m)
(2m)^{\alpha} z^{2m},
\\
h^\delta_o(z)&:=& \sum_{i=m+1}^{n/2}
L(2i-1) (2i-1)^{\alpha} z^{2i-1}.
\end{eqnarray*}
With $d_n(x,y) \in[0,1]$, we thus get (\ref{B1}) by showing that for
some $\gamma_n \to0$,
%
%
\begin{equation}
\label{B2} h^\delta_e(z) \ge\bigl(1 -
\gamma_n m_n^{-1}\bigr) h^\delta_o
(z)\qquad \forall z \ge0.
\end{equation}
To this end, setting $C_{2i-1}:= \sqrt{L(2i) L(2i-2) (2i)^{\alpha}
(2i-2)^{\alpha}}$,
observe that with $n$ even [and $L(\cdot)$ nonnegative], by
discriminant calculations
similar to those we used for bounding $Q_n^M(\cdot)$,
\[
h^\delta_e(z) \ge\sum_{i=m+1}^{n/2}
C_{2i-1} z^{2i-1}\qquad \forall z \in\mathbb{R}.
\]
Hence, (\ref{B2}) follows from
\[
\limsup_{i \to\infty} (2i-1) \biggl| \frac{C_{2i-1}}{L(2i-1)
(2i-1)^{\alpha}} - 1 \biggr| = 0,
\]
which for $\alpha$ finite is a direct consequence of our assumption
(\ref{rate3}).

\subsection{Upper bound on $p_{\mathbb{R}}(n)$}

Considering first the case of $\alpha> -1$, we fix $\delta>0$ and
have that
\[
p_{\mathbb{R}}(n)\leq\mathbb{P}\Bigl(\sup_{x\in I_n(\delta)} \bigl\{
Q_n(x) \bigr\} <0\Bigr),
\]
where
\[
I_n(\delta):= \pm \bigl\{\bigl(e^{-n^{-(1-\delta)}}, e^{-n^{-\delta}}
\bigr) \cup\bigl(e^{n^{-(1-\delta)}},e^{n^{-\delta}}\bigr) \bigr\} =:\bigcup
_{i=1}^4 J_i(\delta).
\]
The asymptotic of $p_{J_3(\delta)}(n)$ and $p_{J_4(\delta)}(n)$, provided
in (\ref{J3bd}), and (\ref{J4bd}), respectively, extend to any crossing
levels $\xi_n \to0$. In view of these and the
invariance of law of $Q_n(\cdot)$ to change of sign, by the
usual argument based on Slepian's inequality, it remains only to show
that the autocorrelation $c_n(x,y):=\operatorname{corr}[Q_n(x),Q_n(y)]$ satisfies
%
%
\begin{eqnarray}
\label{B3} c_n(x,y)&\leq&\epsilon_n+(1-
\epsilon_n)c_n(x,y)1_{\{(x,y)\in
J_i(\delta
),1\leq i\leq4\}}
\end{eqnarray}
for some $\epsilon_n T_n \to0$. This amounts to confirming that
%
%
\begin{eqnarray}
\label{y1} T_n c_n(x,-y) &\lesssim&o(1)\qquad\forall x,y \in \bigl(e^{-n^{-\delta}},e^{n^{-\delta}}\bigr),
\\
\label{y2} T_n c_n\bigl(x,y^{-1}\bigr)
&\lesssim&o(1)\qquad\forall x,y \in \bigl(e^{-n^{-\delta}},e^{-n^{-(1-\delta)}}
\bigr).
\end{eqnarray}
Turning to prove (\ref{y1}), note that
\[
\operatorname{Cov}\bigl(Q_n(x),Q_n(y)\bigr)=h_e(xy)+h_o(xy)
\]
for
\[
h_e(z):=1+\sum_{i=1}^{n/2} L(2i)
(2i)^{\alpha}z^{2i},\qquad h_o(z):=\sum
_{i=1}^{n/2} L(2i-1) (2i-1)^{\alpha}
z^{2i-1}.
\]
Thus,
\[
\bigl|c_n(x,-y)\bigr|=c_n(x,y)\frac{|h_e(xy)-h_o(xy)|}{h_e(xy)+h_o(xy)}\leq
\frac
{|h_e(xy)-h_o(xy)|}{h_e(xy)+h_o(xy)}
\]
and it suffices to show that as $n \to\infty$,
%
%
\begin{equation}
\label{y4} T_n \sup_{|\log z| \le2 n^{-\delta}} \frac
{|h_e(z)-h_o(z)|}{h_e(z)+h_o(z)}
\to0.
\end{equation}
To this end, setting $m = m_n:=\lfloor T_n^2\rfloor$ we have by (\ref{rate3}) that
\begin{eqnarray*}
&& \bigl|h_e(z)-h_o(z)\bigr|
\\
&&\qquad \leq  1+\sum_{i=1}^{2m}
L(i) i^{\alpha} z^i +\sum_{i=m+1}^{n/2}
L(2i) (2i)^{\alpha} z^{2i} \biggl|\frac
{L(2i-1)(2i-1)^{\alpha}}{L(2i)(2i)^{\alpha}}z^{-1}-1\biggr|
\\
&&\qquad \lesssim \sum_{i=1}^{2m} i^{\alpha+\delta}+\sum
_{i=m+1}^{n/2} \biggl[\biggl|1-\frac{1}{z}\biggr|+\sup
_{i\geq m} \biggl|\frac{L(2i-1)(2i-1)^{\alpha}}{L(2i)(2i)^{\alpha}}-1 \biggr| \biggr]L(2i)
(2i)^{\alpha}z^{2i}
\\
&&\qquad \lesssim  T_n^{2(\alpha+2)_+} + \bigl[n^{-\delta}+m_n^{-1}\bigr]h_e(z).
\end{eqnarray*}
Noting that $z \mapsto[h_e(z) + h_o(z)]$ is nondecreasing on $\mathbb{R}_+$,
we get from
(\ref{h2}) that
\[
\inf_{|\log z| \le2 n^{-\delta}} \bigl[h_e(z)+h_o(z)
\bigr] \gtrsim L\bigl(n^\delta \bigr) n^{\delta(\alpha+1)} \gtrsim
n^{\delta(\alpha+1)/2}
\]
and (\ref{y4}) follows. Proceeding to prove (\ref{y2}),
note that $\max(x,y)^n \le e^{-n^\delta}$ for $x,y\in J_3(\delta)$, hence
\begin{eqnarray*}
c_n\bigl(x,y^{-1}\bigr)&=&\frac{y^{n}+\sum_{i=1}^{n}L(i) i^{\alpha} x^i
y^{n-i}}{ [ (1+\sum_{i=1}^{n}L(i) i^{\alpha} x^{2i})
(y^{2n} +\sum_{i=1}^{n}L(i) i^{\alpha} y^{2(n-i)})
]^{1/2} }
\\
& \lesssim&
\frac{n^{\alpha+2}\max(x,y)^n}{\sqrt{L(n) n^{\alpha}}}\lesssim e^{-n^{\delta/2}}.
\end{eqnarray*}

Finally, in case $\alpha\le-1$ it suffices to consider the event of
no-crossing in intervals $J_1(\delta) \cup J_4(\delta)$ outside
$[-1,1]$. Consequently, suffices to confirm only (\ref{y1}), the first
of our two claims, and only for $x,y \in J_4(\delta):=
(e^{n^{-(1-\delta)}},e^{n^{-\delta}})$.
We proceed as before via (\ref{y4}), now needing it only for $\sqrt{z}
\in J_4(\delta)$, so at end of its proof we rely
here on the bound (\ref{hh1}) at $w=2 n^{-(1-\delta)}$ (which hold for
all $\alpha\in\mathbb{R}$), to get that
uniformly in $\sqrt{z}\in J_4(\delta)$,
\[
h_e(z)+h_o(z)\gtrsim n^{1-\delta} L(n)
n^{\alpha} e^{2 n^\delta} \gtrsim e^{n^{\delta}}.
\]


\section{Proofs of Lemmas \texorpdfstring{\protect\ref{ignore}}{3.1}--\texorpdfstring{\protect\ref{after}}{3.3}}\label{sec-4}

We begin by proving Lemmas~\ref{third} and~\ref{after}
regarding asymptotic covariances in intervals which
dominate the persistence probabilities of
Theorem~\ref{poly}.
\begin{pf*}{{ Proof of Lemma~\ref{third}}}
We set $\overline{J}:=(n^{-(1-\delta)},n^{-\delta})$ and
make frequent use of the following obvious estimates, valid for all
$l>-1$ and $y>1>w>0$:
\begin{eqnarray*}
w^{l+1} \sum_{i \ge y/w} i^{l}
e^{-iw} &\lesssim& e^{-y/2},\qquad w^{l+1} \int
_{x \ge y/w} x^{l}e^{-x w}\,dx\lesssim
e^{-y/2},
\\
w^{l+1} \sum_{i=1}^{1/w} i^{l}&\lesssim & 1.
\end{eqnarray*}
Here, the constants implied by $\lesssim$ are allowed to depend on $l$
(in any case we use these bounds only for $l=\alpha$,
$l=\alpha+1$ and $l=\alpha+2$).

Starting with the proof of (\ref{h2}), from the representation theorem
\cite{BGT}, Theorem~1.3.1,
it follows that  $L(x) \sim \tilde{L}(x)$ and $x^\eta \tilde{L}(x)$ is
eventually increasing (decreasing), if $\eta>0$ (or
$\eta<0$, resp.).
Hence, to simplify the presentation we can assume
hereafter that $x^\eta L(x)$ is eventually increasing (decreasing) if
$\eta>0$ (or $\eta<0$, resp.).
Thus, for $\eta:=(l+1)/2>0$ there exists $x_1<\infty$ such that
$L(i)\leq L(1/w)/(wi)^{\eta} $ for all $ x_1\leq i\leq1/w$.
Consequently, for all $a \ge w x_1$,
%
%
\begin{equation}
\label{t1} \frac{w^{l+1}}{L(1/w)} \sum_{i=x_1}^{a/w}
L(i) i^l e^{-i w} \leq w^{l+1-\eta} \sum
_{i=x_1}^{a/w} i^{l-\eta}e^{-iw}
\lesssim a^{(l+1)/2}.
\end{equation}
Likewise, there exists $x_2<\infty$ such that $L(i)\leq i w L(1/w)$ for
$x_2\leq1/w \leq i$; hence, for $b\geq w x_2$,
%
%
\begin{equation}
\label{t2} \frac{w^{l+1}}{L(1/w)} \sum_{i\geq b/w}
L(i)i^le^{-iw} \leq w^{l+2} \sum
_{i\geq
b/w} i^{l+1} e^{-iw} \lesssim e^{-b/2}.
\end{equation}
Combining the bounds (\ref{t1}) and (\ref{t2}) with those corresponding
to $L(\cdot)\equiv1$, results with
\begin{eqnarray*}
&& \frac{w^{l+1}}{L(1/w)} \Biggl|\sum_{i=x_1}^\infty \biggl[L(i)-L
\biggl(\frac{1}{w}\biggr)\biggr]i^le^{-iw} \Biggr|
\\
&&\qquad \lesssim
a^{(l+1)/2}+e^{-b/2}+ \biggl\{\sup_{\lambda\in[a,b]} \biggl|
\frac{L(\lambda
/w)}{L(1/w)}-1 \biggr| \biggr\} w^{l+1}\sum_{i=x_1}^\infty
i^l e^{-iw}.
\end{eqnarray*}
Since for $l + 1 >0$ and $w >0$,
\[
\Biggl| w^{l+1} \sum_{i=x_1}^\infty
i^l e^{-iw} - \Gamma(l+1) \Biggr| \lesssim w^{\min(l+1,1)},
\]
it follows that for any $n \ge b/w$,
%
%
\begin{eqnarray}
\label{estimate}
&&\biggl|\frac{w^{l+1}h_{l,n}(w)}{L(1/w)}- \Gamma(l+1) \biggr|
\nonumber\\
&&\qquad \lesssim a^{(l+1)/2}+e^{-b/2}
+\sup_{\lambda\in[a,b]} \biggl| \frac{L(\lambda/w)}{L(1/w)}-1 \biggr|
+ w^{\min(l+1,1)/2}.
\end{eqnarray}
To deduce (\ref{h2}), consider $l=\alpha>-1$ and fixing $\epsilon>0$,
choose $a=a(\epsilon)$ small and $b=b(\epsilon)$ large such that
for all $w \in2 \overline{J}$ the first two terms on the right-hand side
are bounded by $\epsilon$. Then recall that for $w \downarrow0$, the
convergence
$|L(\lambda/w)/L(1/w)-1| \to0$ is uniform over $\lambda$ in compacts
(cf. \cite{BGT}, Theorem 1.2.1).

Turning to prove (\ref{h5}), we have by (\ref{h2}) that for $u,v \in
\overline{J}$,
\[
\bar{c}_n(u,v)=\frac{h_{\alpha,n}(u+v)}{\sqrt{h_{\alpha,n}(2u)h_{\alpha,n}(2v)}} \lesssim S(u,v)
R(u,v)^{\alpha+1}
\]
with $S(\cdot,\cdot)$ and $R(\cdot,\cdot)$ of (\ref{deflimitcov}). By
the eventual monotonicity of
$x\mapsto x^{\pm2\eta} L(x)$, we further have for $n^{-\delta} \ge v
\ge u >0$ and all large $n$,
\[
\sqrt{\frac{L(1/(u+v))}{L(1/(2u))}}\leq \biggl(\frac
{u+v}{2u} \biggr)^\eta,
\qquad \sqrt{ \frac{L(1/(u+v))}{L(1/(2v))}}\leq \biggl(\frac
{2v}{u+v}
\biggr)^\eta,
\]
resulting with $S(u,v) \le(v/u)^{\eta}$. Clearly, $R(u,v) \le2
(v/u)^{-1/2}$, so taking
$\eta=(\alpha+1)/4$ we arrive at (\ref{h5}).
Next, fixing $M>1$ and setting
$\bar{g}_{\alpha,n}(w):=w^{\alpha+1}h_{\alpha,n}(w)$,
\[
\overline{G}_{\alpha,n}(u,v):= \frac{\bar{c}_n(u,v)}{R(u,v)^{\alpha
+1}} = \frac{\bar{g}_{\alpha,n}(u+v)}{\sqrt{\bar{g}_{\alpha,n}(2u)\bar{g}_{\alpha,n}(2v)}}
\]
[by (\ref{deflimitcov}) and the preceding expression for $\bar{c}_n(u,v)$], our claim (\ref{h4}) amounts to
%
%
\begin{equation}
\label{nn1} - \epsilon_n \bigl(1 - R(u,v)\bigr)\le
\overline{G}_{\alpha,n}(u,v) - 1 \le \epsilon_n
\bigl(R(u,v)^{-(\alpha+1)} - 1\bigr)
\end{equation}
for some $\epsilon_n \to0$, any $v \in[u,Mu]$ and all $u \in\overline{J}$.
Since $z - 1 - \log z \ge0$ on $\mathbb{R}_+$ and
$\epsilon p (1-r) \le\log(1 + \epsilon(r^{-p} - 1))$ whenever
$p \ge0$ and $r,\epsilon\in[0,1]$, the inequality~(\ref{nn1})
follows in turn from
\[
- \epsilon_n \bigl(1 - R(u,v)\bigr)\le G_{\alpha,n}(u,v):=
\log\overline {G}_{\alpha,n} (u,v) \le\epsilon_n (\alpha+1)
\bigl(1-R(u,v)\bigr).
\]
To this end, setting $\epsilon_n:= (1+\alpha\wedge0)^{-1} (1+M)^2
\tilde{\epsilon}_n$ and noting that
\[
1-R(u,v)=\frac{(\sqrt{v}-\sqrt{u})^2}{v+u} \geq\frac{(v-u)^2}{2
(v+u)^2} \ge\frac{(v-u)^2}{2 (1+M)^2 u^2},
\]
it suffices to show that for some $\tilde{\epsilon}_n \to0$,
%
%
\begin{equation}
\label{n3} \bigl|G_{\alpha,n}(u,v)\bigr| \leq\tilde{\epsilon}_n
\frac{(v-u)^2}{2
u^2}.
\end{equation}
Now, fixing $u$, we expand the function $v \mapsto G_{\alpha,n}(u,v)$
in Taylor's series about $v=u$, to get
%
%
\begin{equation}
\label{now2} \qquad G_{\alpha,n}(u,v)=G_{\alpha,n}(u,u)+(v-u){G}_{\alpha,n}'(u,u)+
\frac
{(v-u)^2}{2}{G}_{\alpha,n}''(u,\xi)
\end{equation}
for some $\xi=\xi_n(u,v) \in[u,v]$. With
\[
G_{\alpha,n}(u,v) = g_{\alpha,n}(u+v)-\tfrac{1}{2}g_{\alpha,n}(2u)-
\tfrac
{1}{2}g_{\alpha,n}(2v), \qquad g_{\alpha,n}(w):=\log
\bar{g}_{\alpha,n}(w),
\]
clearly $G_{\alpha,n}(u,u)=G_{\alpha,n}'(u,u)=0$ and
%
%
\begin{eqnarray}
\label{der}
u^2 \bigl|G_{\alpha,n}''(u,
\xi)\bigr|&=& u^2 \bigl|g_{\alpha,n}''(u+\xi)- 2
g_{\alpha,n}''(2\xi)\bigr|
\nonumber\\
&\leq& 3 \sup_{w \in2\overline{J}} \bigl\{ w^2 \bigl|g_{\alpha,n}''(w)\bigr|
\bigr\}:= \tilde{\epsilon}_n.
\end{eqnarray}
Thus, to complete the proof of (\ref{n3}), and thereby that of (\ref{h4}), it suffices to show that
$w^2 |g_{\alpha,n}''(w)| \to0$ uniformly in $w \in2 \overline{J}$.
For this task, setting $h^0_{l,n}(w):=h_{l,n}(w)-1$,
we have that $h'_{l,n}(w)=-h^0_{l+1,n}(w)$ and consequently,
%
%
\begin{eqnarray}
\label{der2} 
w^2 g''_{\alpha,n}(w)
&=& -(\alpha+1)+\frac{w^2h^0_{\alpha
+2,n}(w)}{h_{\alpha,n}(w)}- \biggl(\frac{wh^0_{\alpha
+1,n}(w)}{h_{\alpha,n}(w)}
\biggr)^2.
\end{eqnarray}
From (\ref{h2}), we know that for $l=1,2$, uniformly in $w \in2 \overline{J}$, as $n \to\infty$,
\[
\frac{w^l h^0_{\alpha+l,n}(w)}{h_{\alpha,n}(w)} \to\frac{\Gamma
(\alpha
+l+1)}{\Gamma(\alpha+1)}
\]
and we are done since
%
%
\begin{equation}
\label{Gamma-ident} -(\alpha+1) + \frac{\Gamma(\alpha+3)}{\Gamma(\alpha+1)} - \biggl(\frac
{\Gamma(\alpha+2)}{\Gamma(\alpha+1)}
\biggr)^2 = 0.
\end{equation}\upqed
\end{pf*}

\begin{pf*}{Proof of Lemma~\ref{after}}
To prove (\ref{hh1}), fix $\delta\in(0,1)$ and
setting $\kappa_n:=n-n^{1-\delta/2}$, note that
for $w \in2 \overline{J}$
\begin{eqnarray*}
&& \bigl(1-e^{-w}\bigr) e^{-n w} \sum
_{i=\kappa_n+1}^n \biggl|\frac{L(i)i^\alpha
}{L(n)n^\alpha}-1 \biggr| e^{iw}
\\
&&\qquad \lesssim n^{-\delta/2} + \sup_{\mu\in[1-n^{-\delta/2},1]} \biggl|\frac{L(\mu n)}{L(n)}-1 \biggr|
=: \gamma_n, 
\end{eqnarray*}
$e^{-n w} h_{\alpha,\kappa_n}(-w) \lesssim e^{-n^{\delta/3}}$
and
\[
\Biggl| \bigl(1-e^{-w}\bigr) e^{-n w} \sum
_{i=\kappa_n+1}^n e^{i w} - 1 \Biggr| \lesssim
e^{-n^{\delta/2}}. 
\]
Combining these bounds, we find that for any $\alpha\in\mathbb{R}$
and $w \in
2 \overline{J}$,
%
%
\begin{equation}
\label{lastest} \biggl|\frac{w e^{-n w} h_{\alpha,n}(-w)}{L(n)n^\alpha}-1 \biggr| \lesssim \gamma_n
\end{equation}
from which (\ref{hh1}) follows, since $\gamma_n \to0$ for
any fixed slowly varying $L(\cdot)$ and $\delta> 0$.

We now confirm (\ref{hh3}) by noting that
\[
\tilde{c}_n(u,v)=h_{\alpha,n}(-u-v)/\sqrt{h_{\alpha,n}(-2u)h_{\alpha,n}(-2v)},
\]
which by (\ref{hh1}) converges as $n \to\infty$, uniformly in $u,v
\in
\overline{J}$,
to $R(u,v) \le2 (v \vee u / v \wedge u)^{-1/2}$.

Next, proceeding along the same lines as the proof of (\ref{h4}), now with
$\overline{G}_{\alpha,n}(u,v):= \tilde{c}_n(u,v)/R(u,v)$
and $g_{\alpha,n}(w):= \log[w h_{\alpha,n}(-w)]$, reduces the proof of
(\ref{hh2}) to $w^2 |g_{\alpha,n}''(w)| \to0$, uniformly in $w \in2
\overline{J}$.
To this end, it is not hard to check that (\ref{der2}) is replaced
here by
\[
w^2 g''_{\alpha,n}(w) = -1 +
\frac{w^2h^0_{\alpha+2,n}(-w)}{h_{\alpha,n}(-w)}- \biggl(\frac{wh^0_{\alpha+1,n}(-w)}{h_{\alpha,n}(-w)} \biggr)^2 = -1 + \operatorname{Var}(w H_{n,w}),
\]
where [adopting the convention $L(0) 0^\alpha=1$], for $j=0,1,\ldots,n$,
\[
\mathbb{P}(H_{n,w}=j) = \frac{L(n-j) (n-j)^{\alpha} e^{-j w}}{\sum_{k=0}^n L(n-k) (n-k)^{\alpha} e^{-k w}}.
\]
The variance of the $\operatorname{Geometric}(e^{-w}$) random variable $H_{\infty,w}$
is $\frac{1}{4} [\sinh(w/2)]^{-2}$, hence
$\operatorname{Var}(w H_{\infty,w}) \to1$ when $w \downarrow0$. Further, as
we have already seen, truncating $w H_{\infty,w}$ and
$w H_{n,w}$ at $w n^{1-\delta/2}$ changes the corresponding variances
by at most~$e^{-n^{\delta/3}}$, uniformly over $w \in2 \overline{J}$ and
from the estimates leading to (\ref{lastest}), we easily deduce that
\[
\sup_{w \in2\overline{J}, j \le n^{1-\delta/2}} \biggl|\frac{\mathbb{P}(H_{n,w}=j)}{\mathbb{P}(H_{\infty,w}=j)} - 1 \biggr| \lesssim
\gamma_n.
\]
Combining these facts, we conclude that
\[
\sup_{w \in2\overline{J}} \biggl| \frac{\operatorname{Var}(w H_{n,w})}{\operatorname{Var}(w
H_{\infty,w})} - 1 \biggr| \lesssim
\gamma_n, 
\]
thereby completing the proof of (\ref{hh2}).
\end{pf*}

We proceed with a regularity lemma that is used in the sequel
for proving Lemma~\ref{ignore} (and Lemma~\ref{heat1}).

%
\begin{lem}\label{final}
There exist finite universal constants $K_d$, such that if centered
Gaussian process $\{Z_t, t \in T\}$,
indexed on $T=[a,b]^d \subset\mathbb{R}^d$, satisfies
%
%
\begin{equation}
\label{last11} D(s,t)^2:=\mathbb{E} \bigl[ (Z_t-Z_s)^2
\bigr] \leq M^2 \|t-s\|_2^2\qquad\forall s,t\in T
\end{equation}
for some $M<\infty$, then
%
%
\begin{equation}
\label{last12} \mathbb{E} \Bigl[ \sup_{t\in T} Z_t
\Bigr] \leq K_d M |b-a|.
\end{equation}
Further, if for $d=1$ we have that $t \mapsto Z_t \in\mathcal{C}^1$ and
%
%
\begin{equation}
\label{last13} 2 (b-a)^2 \sup_{t \in T} {\mathbb{E}
\bigl[{Z'_t}^2\bigr]} \le\sup
_{t \in T} \mathbb{E} \bigl[Z_t^2\bigr],
\end{equation}
then for some universal constant $\mu>0$,
%
%
\begin{equation}
\label{last14} \mathbb{P}\Bigl(\sup_{t\in T} \{ Z_t
\} <0\Bigr)\geq\mu.
\end{equation}
\end{lem}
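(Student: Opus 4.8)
The plan is to prove the two parts of Lemma \ref{final} by the standard Gaussian-process toolkit: Dudley's entropy bound (or a chaining argument) for \eqref{last12}, and an anti-concentration/small-ball argument via the Gaussian correlation inequality combined with the $L^2$ control \eqref{last13} for \eqref{last14}.

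For the bound \eqref{last12}, I would first observe that \eqref{last11} says the canonical (pseudo-)metric $D(s,t)$ on $T=[a,b]^d$ is dominated by $M\|t-s\|_2$, so covering numbers of $(T,D)$ are controlled by those of a Euclidean cube of side $|b-a|$: namely $N(T,D,\varepsilon)\lesssim_d (M|b-a|/\varepsilon)^d$ for $\varepsilon\le M|b-a|$ and $N=1$ beyond. Then Dudley's entropy integral
\[
\E\big[\sup_{t\in T}Z_t\big]\;\lesssim\;\int_0^{\mathrm{diam}_D(T)}\sqrt{\log N(T,D,\varepsilon)}\,d\varepsilon
\;\lesssim_d\;\int_0^{M|b-a|}\sqrt{d\,\log\!\big(2M|b-a|/\varepsilon\big)}\,d\varepsilon
\]
evaluates (by the substitution $\varepsilon = M|b-a|u$) to a constant depending only on $d$ times $M|b-a|$, giving \eqref{last12} with an explicit $K_d$. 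Alternatively, one may simply quote \cite[Theorem 1.3.3]{AT} or \cite[Theorem 1.4.1]{AT} in the form already used in the proof of Lemma \ref{polyhelp}; I would likely cite it to keep the argument short. This part I expect to be entirely routine.

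For \eqref{last14}, the idea is that $Z$ is not too rough on $T$ (by \eqref{last13} its derivative is small relative to its typical size), so with positive probability $Z$ stays below the level determined by its value at the midpoint, which is itself negative with probability $1/2$. Concretely, write $c:=(a+b)/2$ and $\sigma^2:=\sup_{t}\E[Z_t^2]$; on the event $\{Z_c\le -r\}$ (of probability bounded below by a universal constant once $r=\lambda\sigma$ for suitable $\lambda$) we would like $\sup_t|Z_t-Z_c|<r$. By part one (applied with $d=1$ and the derivative bound from \eqref{last13}, since $D(s,t)\le |t-s|\,\sup_t\sqrt{\E[{Z'_t}^2]}$) we have $\E[\sup_t(Z_t-Z_c)]\le K_1 (b-a)\sup_t\sqrt{\E[{Z'_t}^2]}\le K_1\sigma/\sqrt 2$, and by symmetry $\E[\sup_t|Z_t-Z_c|]\le$ (const)$\cdot\sigma$; Gaussian concentration of the supremum around its mean then shows $\P(\sup_t|Z_t-Z_c|<r)$ is bounded below by a universal constant for $r$ a large-enough universal multiple of $\sigma$. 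Finally, to combine $\{Z_c\le -r\}$ with $\{\sup_t|Z_t-Z_c|<r\}$ I would invoke the Gaussian correlation inequality (Royen's theorem, or simply Khatri–Šidák, since one event is a symmetric slab): the probability of the intersection is at least the product of the two probabilities, which is a universal $\mu>0$. On that intersection $\sup_t Z_t < Z_c + r \le 0$, proving \eqref{last14}.

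The main obstacle is calibrating the constants in the second part so that the level $r$ is simultaneously (i) a small enough multiple of $\sigma$ that $\P(Z_c\le -r)$ stays bounded away from $0$, and (ii) a large enough multiple of $\E[\sup_t|Z_t-Z_c|]$ that the fluctuation event has probability bounded away from $0$; this is exactly where hypothesis \eqref{last13} is used, as it forces $\E[\sup_t|Z_t-Z_c|]\lesssim\sigma$ with an absolute constant, leaving enough room. Everything else — Dudley's bound, Gaussian concentration, and the correlation/Šidák inequality — is off-the-shelf, so I would keep those steps brief and cite \cite{AT}.
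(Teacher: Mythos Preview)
Your treatment of \eqref{last12} via Dudley's entropy integral is correct and matches the paper's argument (which cites \cite[Theorem 1.3.3]{AT} and evaluates the same integral).

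For \eqref{last14}, however, the combination step has a genuine gap. You want
\[
\mathbb{P}\big(\{Z_c\le -r\}\cap\{\textstyle\sup_t|Z_t-Z_c|<r\}\big)\ \ge\ \mathbb{P}(Z_c\le -r)\,\mathbb{P}\big(\textstyle\sup_t|Z_t-Z_c|<r\big)
\]
via Royen/Khatri--\v{S}id\'ak. But those inequalities require \emph{both} sets to be symmetric convex, and $\{Z_c\le -r\}$ is a half-space. In fact the inequality goes the \emph{wrong} way: by symmetry of the centered Gaussian law under $Z\mapsto -Z$, the left side equals $\tfrac12\,\mathbb{P}(\{|Z_c|\ge r\}\cap B)$, and since the slab $\{|Z_c|<r\}$ and the symmetric convex $B$ are positively correlated (this \emph{is} the GCI), one gets $\mathbb{P}(\{|Z_c|\ge r\}\cap B)\le \mathbb{P}(|Z_c|\ge r)\,\mathbb{P}(B)$, hence $\mathbb{P}(A\cap B)\le\mathbb{P}(A)\,\mathbb{P}(B)$.

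The paper instead uses the elementary union bound $\mathbb{P}(A\cap B)\ge\mathbb{P}(A)-\mathbb{P}(B^c)$, anchoring not at the midpoint but at $t_0$ where $\mathbb{E}[Z_{t_0}^2]=\sigma_T^2$ is maximal. The point is then a \emph{quantitative} tail comparison: Borell--TIS gives $\mathbb{P}(\sup_t(Z_t-Z_{t_0})>\lambda\sigma_T)\le 2e^{-(\lambda-K_1)^2}$, and this is eventually dominated by $\mathbb{P}(Z_{t_0}\le -\lambda\sigma_T)\asymp e^{-\lambda^2/2}$ precisely because \eqref{last13} forces the variance reduction $\sup_t\mathbb{E}[(Z_t-Z_{t_0})^2]\le\tfrac12\sigma_T^2$, putting the extra factor $2$ in the exponent. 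Your write-up uses \eqref{last13} only to bound the \emph{mean} of the supremum; you also need this pointwise variance bound, and you should replace the correlation-inequality step by the union-bound/tail-comparison. (Your midpoint anchor can be made to work after checking, via the $L^2$ triangle inequality and \eqref{last13}, that $\mathbb{E}[Z_c^2]\ge c_0\,\sigma_T^2$ for a universal $c_0>0$; taking $t_0$ as the variance maximizer avoids this detour.)
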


\begin{pf} For proving (\ref{last12}) note that there exist
$C_d<\infty$
such that $T$ is covered by at most
$N(\epsilon) = \min\{1,\epsilon^{-d}(C_d M |b-a|)^d\}$
Euclidean balls of radius $\epsilon/M$. With $B_D(s,r)=\{t \in T\dvtx
D(s,t) \le\epsilon\}$ denoting
the ball in pseudo-metric $D(\cdot,\cdot)$ of radius $\epsilon\ge0$
and center $s \in T$ and
$B(s,\epsilon)$ the Euclidean ball of same radius and center,
our assumption (\ref{last11}) implies that $B(s,\epsilon/M) \subseteq
B_D(s,\epsilon)$ for any $s \in T$, thereby
inducing a cover of $T$ by at most $N(\epsilon)$ balls of radius
$\epsilon$ in pseudo-metric $D(\cdot,\cdot)$.
Recall \cite{AT},  Theorem 1.3.3,
that there exist universal finite $K_0$ such that
\[
\mathbb{E} \Bigl[ \sup_{t \in T} Z_t \Bigr] \leq
K_0\int_{0}^{C_d M |b-a|} \sqrt{\log N(\epsilon)}\,d\epsilon.
\]
Our\vspace*{-2pt} thesis follows upon change of
variable $y=\sqrt{d^{-1} \log N(\epsilon)}$,
with $K_d:=2\sqrt{d} K_0 C_d \int_{0}^\infty y^2 e^{-y^2} \,dy$.

Turning to prove (\ref{last14}), let
${\sigma_T}^2:= \sup_{t\in T} \mathbb{E}[Z_t^2]$ and
$\overline{Z}_t:= Z_t-Z_{t_0}$ for $t_0\in T$
such that $\mathbb{E}[Z_{t_0}^2] = \sigma_T^2$.
Then, by Cauchy--Schwarz
we have that for any $s,t \in T$,
\[
\mathbb{E}\bigl[ (\overline{Z}_t-\overline{Z}_s)^2
\bigr] = \mathbb{E}\bigl[ (Z_t-Z_s)^2
\bigr] \leq (t-s)^2 \sup_{u\in[s,t]}\mathbb{E}\bigl[
Z_u^{\prime 2}\bigr].
\]
Thus, (\ref{last13}) results with
\[
\bar{\sigma}_T^2:= \sup_{t\in T}
\mathbb{E}\bigl[\overline{Z}_t^2\bigr] \leq
\frac{1}{2} {\sigma_T}^2
\]
and considering (\ref{last12}) for $\overline{Z}_t$, we further have that
$\mathbb{E} [\sup_{t\in T} \overline{Z}_t ] \leq K_1 \sigma_T$.
Clearly,
\[
\sup_{t \in T} Z_t = Z_{t_0} + \sup
_{t\in T} \overline{Z}_t,
\]
so by a union bound we have for any $\lambda>0$,
%
%
\begin{equation}
\label{eqamir-0} \mathbb{P}\Bigl(\sup_{t\in T} \{ Z_t
\} <0\Bigr)\geq \mathbb{P}(Z_{t_0}<-\lambda\sigma_T)-
\mathbb{P}\Bigl(\sup_{t\in T}\{ \overline{Z}_t \} >
\lambda\sigma_T\Bigr).
\end{equation}
For $\lambda\ge K_1$, large enough the first term
on the right-hand side is at least $0.5 e^{-\lambda^2/2}$ and
by Borell-TIS inequality, the second term is at most
\[
2 \exp \biggl\{-\frac{(\lambda-K_1)^2 \sigma_T^2}{2 \bar{\sigma}_T^2} \biggr\} \le2 e^{-(\lambda-K_1)^2}.
\]
This completes the proof, since
$\mu:= 0.5 e^{-\lambda^2/2} - 2 e^{-(\lambda-K_1)^2}$ is strictly positive
for~$\lambda$ large enough.
\end{pf}

We establish part (a) of Lemma~\ref{ignore} by partitioning
relevant domains of $Q_n(e^{-\cdot})$ to at most
$\gamma(\delta) T_n$ subintervals, within each of
which (\ref{last13}) holds [and where $\gamma(\delta) \to0$],
thereby combining Lemma~\ref{final} and Slepian's inequality.
However, to provide the estimates of part (b) in
\textit{critical case} of $\alpha=-1$, we require
the following comparison (after a change of argument),
between $Q_n(e^{-\cdot})$ and the standard
stationary Ornstein--Uhlenbeck process $\{X_t,t\geq0\}$.

%
\begin{lem}\label{O-U}
For $\alpha= -1$ and any slowly varying $L(\cdot)$,
there exist $r(\gamma) \downarrow0$ when $\gamma\downarrow0$, such that
%
%
\begin{equation}
\label{eqsumit-1} \bar{c}_n(u,v)\geq \biggl(\frac{u}{v}
\biggr)^{r(\gamma)}\qquad\forall0 < u \leq v \leq\gamma.
\end{equation}
\end{lem}

\begin{pf} First note that for $v \ge u \ge0$, by
the monotonicity of $u \mapsto h_{\alpha,n}(u)$,
\[
\bar{c}_n(u,v)=\frac{h_{\alpha,n}(u+v)}{\sqrt{h_{\alpha,n}(2u)h_{\alpha,n}(2v)}} \geq\frac{h_{\alpha,n}(2v)}{h_{\alpha,n}(2u)} \geq
\frac{h_{\alpha,\infty}(2v)}{h_{\alpha,\infty}(2u)},
\]
where the second inequality follows by noting that
$n \mapsto h_{\alpha,n}(2v)/h_{\alpha,n}(2u)$
is monotone decreasing
[for $e^{-2(n+1)(v-u)} \leq h_{\alpha,n}(2v)/h_{\alpha,n}(2u)$
via term by term comparison]. We thus get (\ref{eqsumit-1})
upon finding $r = r(\gamma) \downarrow0$ for which
$\xi_{r} (u):= u^r h_{-1,\infty}(u)$
is nondecreasing on $(0,2 \gamma]$. Since $\xi'_r (u) \ge0$
if and only if
\[
r \ge\zeta(u):= \frac{u h^0_{0,\infty}(u)}{h_{-1,\infty}(u)},
\]
this amounts to showing that $\zeta(u) \downarrow0$ for $u \downarrow0$.
To this end, recall (\ref{estimate}) that
$u h^0_{0,\infty} (u) \lesssim L(1/u)$ and moreover for any $\eta>0$,
\[
h_{-1,\infty}(u) \ge e^{-1} \sum_{i=\eta/u}^{1/u}
L(i) i^{-1} \ge e^{-1} L(1/u) \bigl(1+o(1)\bigr) \log(1/\eta),
\]
so considering $u \downarrow0$ followed by $\eta\downarrow0$
we conclude that also $\zeta(u) \to0$ as $u \downarrow0$.
\end{pf}

\begin{pf*}{Proof of Lemma~\ref{ignore}}
\begin{longlist}[(a)]
\item[(a)] We first consider $\alpha> -1$ and
establish (\ref{eqlast}) by partitioning
$[-n^{-(1-\delta)},\break n^{-(1-\delta)}]$ to at most $\gamma(\delta) T_n$
intervals $\{I_k\}$, with
$\gamma(\delta) \to0$, such that $Z_u = e^{n (u \wedge0)} Q_n(e^{-u})$
satisfies (\ref{last13}) within each such subinterval $I_k$. Indeed,
since $Q_n(e^{-u})$ has nonnegative autocorrelation, by Slepian's
inequality and (\ref{last14}) we have then that
\[
\mathbb{P}\Bigl(\sup_{|u| \le n^{-(1-\delta)}} \bigl\{ Q_n
\bigl(e^{-u}\bigr) \bigr\} < 0\Bigr) \geq\prod
_k \mathbb{P}\Bigl(\sup_{u \in I_k} \{
Z_u \} < 0\Bigr) \geq\mu^{\gamma(\delta) T_n}
\]
for some universal constant $\mu>0$, yielding (\ref{eqlast}) upon
considering $T_n^{-1} \log(\cdot)$ of these probabilities
in the limit $n \to\infty$ followed by $\delta\downarrow0$.

To carry out this program, note first that both
$\mathbb{E}[Q_n(e^{-u})^2]=h_{\alpha,n}(2u)$ and
$\mathbb{E}[Q'_n(e^{-u})^2]=h_{\alpha+2,n}(2u)$
are monotone in $u \ge0$, with (\ref{last13}) obviously
satisfied within \textit{any} subinterval of size $1/(2n)$.

Further, from (\ref{estimate}) we have that for any $l>-1$ there
exist finite $b=b_l$ and positive
$w_l$,
so that $u^{l+1} h_{l,n}(u)/L(1/u)$ is bounded (and bounded
away from zero), uniformly in $u \in[0,w_l]$ and $n \ge b_l/u$.
So, with $\alpha>-1$, the same
applies for
$u^2 h^0_{\alpha+2,n}(2u)/h_{\alpha,n}(2u)$.
This in turn
implies that for some $\eta>0$, $u_\star>0$ and $b \ge2$
finite [depending only on $\alpha$ and $L(\cdot)$],
setting $u_{k,n}=k/(2n)$, $k=0,\ldots,b$
and $u_{k+b,n}=b e^{\eta k}/(2n)$, $k \ge0$,
the process $Z_u=Q_n(e^{-u})$
satisfies (\ref{last13}) in each interval
$I_k=[u_{k-1,n},u_{k,n}]$, $k \ge1$, provided $u_{k,n} \le u_\star$.
Since $u_{k_\star+ b,n} \ge n^{-(1-\delta)}$ for
$k_\star:= (\delta/\eta) T_n$, this takes care of
the part of $u \ge0$ in (\ref{eqlast}). In case
$u=-w<0$, we follow the same reasoning, just now
applying Lemma~\ref{final} for the rescaled process
$Z_w:= e^{-n w} Q_n(e^w)$, $w \ge0$. Specifically,
setting
\[
\tilde{h}_{l,n}(w):= \sum_{j=0}^n
L(n-j) (n-j)^\alpha j^l e^{-j w}
\]
for $l=0,2$ [with $L(0) 0^\alpha:=1$], it is
easy to check that $\mathbb{E}[Z_w^2]=\tilde{h}_{0,n}(2w)$
and $\mathbb{E}[{Z'_w}^2]=\tilde{h}_{2,n}(2w)$. Thus,
per $\alpha> -1$ and slowly varying $L(\cdot)$, the same
partition takes care of $u < 0$ in (\ref{eqlast})
provided
$w^{3} \tilde{h}_{2,n}(w)/(L(n) n^{\alpha})$
is bounded and
$w \tilde{h}_{0,n}(w)/(L(n) n^{\alpha})$
bounded away from zero,
uniformly in $w \in[b n^{-1}, w_\star]$,
for some $b<\infty$ and $w_\star>0$.
To this end, fixing $l \ge0$ and $\epsilon\in(0,1)$,
note that the ratio between
$\sum_{j \le(1-\epsilon) n} L(n-j) (n-j)^\alpha j^l e^{-j w}$
and $L(n) n^\alpha\sum_{j \le(1-\epsilon) n} j^l e^{-j w}$
is bounded and bounded away from zero, uniformly in $n$ and $w$
(for any $\alpha\in\mathbb{R}$), and the same applies for the ratio between
the latter and $L(n) n^{\alpha}/w^{l+1}$, provided
$(1-\epsilon) (n w) \ge b$ [as shown in the course
of proving (\ref{h2})]. Next, recall that
$\sum_{i=0}^n L(i) i^{\alpha} \lesssim L(n) n^{\alpha+1}$
for $\alpha> -1$ and slowly varying $L(\cdot)$;
hence, we are done, for
\begin{eqnarray*}
\sum_{j > (1-\epsilon) n}^n L(n-j) (n-j)^\alpha
j^l e^{-j w} &\le& e^{-(1-\epsilon) n w} n^l \sum
_{i=0}^{\epsilon n} L(i) i^\alpha
\\
&\lesssim&
L(n) n^{\alpha} w^{-(l+1)} \xi_\epsilon(n w),
\end{eqnarray*}
where $\xi_\epsilon(b):= b^{l+1} e^{-(1-\epsilon) b} \to0$
as $b \to\infty$.

Having dealt with (\ref{eqlast}) for $\alpha>-1$, we turn
to $\alpha\leq-1$ and fixing $\gamma>0$
set $b(\gamma): = \gamma-(\alpha+1)$. Fixing $l \ge0$,
we claim that $w^{l+1} \tilde{h}_{l,n}(w)/(L(n) n^{\alpha})$
is bounded and bounded away from zero, uniformly
in $w \in[b(\gamma) T_n n^{-1}, w_\star]$. Indeed, the only difference
is that now $\sum_{i=0}^n L(i) i^\alpha\lesssim L(n) n^{\eta}$
for any fixed $\eta>0$, so to neglect the contribution
of $j > (1-\epsilon) n$ to $\tilde{h}_{l,n}(w)$ we need that
\[
n^{\eta- (\alpha+1)} \xi_\epsilon(n w) \to0,
\]
which applies for any $n w \ge b(\gamma) T_n$ if
$\epsilon>0$ and $\eta>0$ are small enough
so that $\gamma(1-\epsilon) > 2 \eta- \epsilon(\alpha+1)$.
We further cover $[0,\gamma T_n/(2n)]$ and
$[b(-\gamma) T_n/(2n),\break b(\gamma) T_n/(2n)]$
by at most $3 \gamma T_n$ intervals of equal length $1/(2n)$,
within each of which Lemma~\ref{final} applies for
$Z_w = e^{-nw} Q_n(e^w)$. So, given that (\ref{eqfirst-b})
handles the domain $u \ge0$, by the same reasoning as before,
we establish (\ref{eqlast})
by showing that for any fixed $\gamma> 0$,
$\alpha< -1$ and $\eta>0$ small enough, the process
$w \mapsto Q_n(e^w)$ satisfies condition (\ref{last13})
within each subinterval of the partition
of $[\gamma T_n/(2n), b(-\gamma) T_n/(2n)]$
given by $w_{k,n}=e^{\eta k} w_{0,n}$, $k \ge1$,
and $w_{0,n}=\gamma T_n/(2n)$. As $h_{\alpha,n}(-w) \ge1$,
this in turn amounts to proving that $w^2 h^0_{\alpha+2,n}(-w)$
is uniformly bounded on $(0, b(-\gamma) T_n/n]$.
Indeed, adapting the calculation leading to
(\ref{lastest}), now for $\kappa_n = \epsilon n$ and
with $L(i) \lesssim i^{\epsilon}$, we find that
\[
h^0_{\alpha+2,n}(-w) \lesssim e^{nw}
n^{\epsilon+ \alpha+3} + e^{\epsilon n w} n^{\epsilon+ (\alpha+3)_+},
\]
which yields the stated uniform boundedness
for $e^{n w} \le n^{b(-\gamma)}$ upon choosing
\mbox{$\epsilon>0$} small enough so that
\[
b(-\gamma) + \epsilon+ \alpha+ 1 < 0, \qquad \epsilon b(-\gamma) + \epsilon+ (
\alpha+ 3)_+ - 2 < 0.
\]

We proceed to confirm (\ref{eqfirst-a}) where,
by (\ref{eqfirst-b}), if $\alpha\le-1$ we
only need to consider $u = -w \le0$. Setting
$w_{k,n}:= e^{\eta k} n^{-\delta}$, $k \ge0$,
recall that we have already seen that for any
$\alpha\in\mathbb{R}$ and $\eta>0$ small enough, the
rescaled process $Z_w$ satisfies~(\ref{last14})
within each subinterval $I_k:=[w_{k-1,n},w_{k,n}]$
[and when $\alpha>-1$ the same applies also for
$Z_u = Q_n(e^{-u})$ with $u >0$].
Hence, partitioning $\pm u \in[n^{-\delta},u_\star]$
for fixed $u_\star\in(0,1]$ to at most $k_\star$ such
subintervals, by the same reasoning we applied for~(\ref{eqlast}) in case $\alpha> -1$, the proof of
(\ref{eqfirst-a}) reduces to showing that for
all $\alpha\in\mathbb{R}$ and any fixed $u_\star>0$,
%
%
\begin{equation}
\label{eqfirst-am} \inf_{n} \mathbb{P}\bigl(Q_n
\bigl(e^{-u}\bigr) < 0,\ \forall|u| \ge u_\star\bigr) > 0.
\end{equation}
We deal with $u \le- u_\star$ in (\ref{eqfirst-am}) by
equivalently, considering
$\{R_n(x):= x^n Q_n(x^{-1}) < 0\}$ for $x \in(0,x_\star]$,
with $x_\star:= e^{-u_\star} < 1$. Specifically, note
that for $x \in[0,x_\star]$,
\[
\mathbb{E}\bigl[ R'_n (x)^2\bigr] \lesssim
\sum_{j=2}^{n} L(n-j) (n-j)^{\alpha
}
j^2 x_\star^{2j}
\]
is bounded by $C L(n) n^\alpha$ for $C=C(\alpha,L(\cdot))$ finite and
all $n$. Indeed, with $\sum_{j=0}^\infty j^2 x_\star^{2j}$ finite, such
bound applies for the sum over $j \le(1-\epsilon) n$ on the
right-hand side,
whereas the remainder sum over $(1-\epsilon) n < j \le n$ contributes
at most
\[
n^2 x_\star^{2 (1-\epsilon) n} \sum
_{i=0}^{\epsilon n} L(i) i^\alpha,
\]
which is exponentially decaying in $n$, hence dominated by $L(n)
n^\alpha$.
Since $\mathbb{E}[R_n(x)^2] \ge L(n) n^\alpha$ for all $x > 0$ and $n$,
the uniform partition of $[0,x_\star]$ to
$r$ subintervals $\{I_k\}$ of length
$x_\star/r$ each, results for $r$ large enough with $x \mapsto R_n(x)$
satisfying (\ref{last13}) within each subinterval $I_k$.
Hence, by Slepian's inequality,
we get that
$\mathbb{P} (\sup_{x \in[0,x_\star]} \{ R_n(x) \} < 0) \ge\mu^r$.
The same argument applies for $u \ge u_\star$, since
$\mathbb{E}[Q_n(x)^2] \ge1$ for all $x \ge0$ and
\[
\mathbb{E}\bigl[Q_n'(x)^2\bigr] \le\sum
_{i=1}^{\infty} L(i) i^{\alpha+2}
x^{2(i-1)}
\]
is\vspace*{1pt} uniformly bounded on $[0,x_\star]$ [for any
fixed $\alpha\in\mathbb{R}$ and slowly varying $L(\cdot)$].

\item[(b)]
Setting $v_n:= \mathbb{E} [Q_n(1)^2] = 1 + \sum_{i=1}^n L(i) i^{\alpha}$
and $\overline{Q}_n(x):= Q_n(x) - Q_n(1)$, note that
\[
\sup_{x \in[0,1]} \mathbb{E} \bigl[ \overline{Q}_n(x)^2
\bigr] = v_n - 1.
\]
If the monotone limit $v_\infty$ of $v_n$ is finite, then
$x \mapsto Q_\infty(x) = \sum_{i=0}^{\infty} a_i x^i$ is a
well-defined centered Gaussian process on $[0,1]$ whose
sample path are a.s. (uniformly) continuous; hence,
$K_\infty:= \mathbb{E} [\sup_{x \in[0,1]} Q_\infty(x)]$
is finite. Since $n \mapsto\mathbb{E}[(Q_n(x)-Q_n(y))^2]$
is nondecreasing, it follows from Sudakov--Fernique inequality
that the (nondecreasing) sequence
$K_n:=\mathbb{E} [ \sup_{x \in[0,1]} Q_n(x) ]$ is
bounded above by $K_\infty$. As argued around (\ref{eqamir-0}),
by Borell-TIS inequality, for any
$\lambda\ge K_\infty\ge\sup_n K_n$ large enough and all $n$,
\begin{eqnarray*}
p_{[0,1]}(n) &\ge& 
\mathbb{P}\bigl(Q_n(1) < -
\lambda\sqrt{v_n}\bigr) - \mathbb{P}\Bigl(\sup_{x \in[0,1]}
\bigl\{ \overline{Q}_n(x) \bigr\} > \lambda\sqrt{v_n}
\Bigr)
\\
&\geq& 0.5 e^{-\lambda^2/2} - 2 e^{-(\lambda-K_n)^2 v_n/(2 (v_n-1))},
\end{eqnarray*}
with $v_n \uparrow v_\infty\in[1,\infty)$, the right-hand side
is bounded away from zero for some $\lambda$ and all $n$
large enough, and hence so is $n \mapsto p_{[0,1]}(n)$.

Assuming hereafter that $v_\infty=\infty$ and in particular
that $\alpha= -1$, in view of Lemma~\ref{O-U}, we get
(\ref{eqfirst-b}) once we show that
%
%
\begin{equation}
\label{eqfirst-bm} \liminf_{n\rightarrow\infty} \frac{1}{T_n}\log\mathbb{P}\Bigl( \sup
_{u \in[\gamma n^{-1},\gamma]} \bigl\{ Q_n\bigl(e^{-u}\bigr)
\bigr\} < 0\Bigr) \ge- r (\gamma)
\end{equation}
(which per Lemma~\ref{O-U} converges to zero as $\gamma\downarrow0$).
This is done upon realizing that the auto-correlation function of
$u \mapsto X_{-2 r(\gamma) \log(u/\gamma)}$ matches the
right-hand side of (\ref{eqsumit-1}), hence by Slepian's inequality,
\[
\mathbb{P}\Bigl( \sup_{u \in[\gamma n^{-1},\gamma]} \bigl\{ Q_n
\bigl(e^{-u}\bigr) \bigr\} < 0\Bigr) \ge \mathbb{P}\Bigl(\sup
_{t \in[0, 2 r (\gamma) T_n]} \{ X_t \} < 0\Bigr)
\]
\end{longlist}
and (\ref{eqfirst-bm}) follows, since
$X_t$ has persistence exponent $1/2$.
\end{pf*}

\section{Proof of Theorem \texorpdfstring{\protect\ref{Heat}}{1.5}}\label{sec-5}

We start with two lemmas, the first of which provides
for each fixed positive time a smooth initial condition of
the required law, while the second explicitly constructs
a solution of the heat equation for such initial condition.

%
%
\begin{lem}\label{heat1}
Equip $\mathcal{ A}=\mathcal{{C}}({\mathbb{R}}^{d})$
with the topology of uniform convergence on compact sets.
For any $\varepsilon>0$, there exists an
$(\mathcal{ A},\mathcal{ B}_\mathcal{ A})$-valued
centered Gaussian field $g_\varepsilon(\cdot)$
with covariance $C_\varepsilon({\mathbf x}_1,{\mathbf x}_2) = K_{2 \varepsilon
}({\mathbf x}_1-{\mathbf x}_2)$
such that $|g_\varepsilon({\mathbf x})|\leq a\|{\mathbf x}\|+b$ for some
$a,b$ (possibly random) and all ${\mathbf x}$.
\end{lem}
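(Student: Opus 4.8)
The plan is to realize $g_\veta$ explicitly as a Wiener integral, extract a continuous modification by the Kolmogorov--Chentsov criterion, and then control its growth by combining Lemma \ref{final} with Gaussian concentration. For the construction we would take a white noise $W$ on $\R^d$ and set $g_\veta({\bf x}):=\int_{\R^d}K_\veta({\bf x}-{\bf y})\,dW({\bf y})$, a well-defined Wiener integral since ${\bf y}\mapsto K_\veta({\bf x}-{\bf y})\in L^2(\R^d)$. By the heat-semigroup identity $K_\veta \ast K_\veta=K_{2\veta}$ together with the evenness of $K_\veta$,
\[
\E\big[g_\veta({\bf x}_1)g_\veta({\bf x}_2)\big]=\int_{\R^d}K_\veta({\bf x}_1-{\bf y})K_\veta({\bf x}_2-{\bf y})\,d{\bf y}=(K_\veta \ast K_\veta)({\bf x}_1-{\bf x}_2)=K_{2\veta}({\bf x}_1-{\bf x}_2)\,,
\]
so the finite-dimensional laws are exactly the prescribed centered Gaussian ones; in particular $C_\veta$ is positive-definite and one could equally well invoke Kolmogorov's extension theorem. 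Note that $C_\veta$ is stationary and $\cC^\infty$, with $\sigma^2:=K_{2\veta}({\bf 0})$.

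\emph{Continuous modification.} Since $K_{2\veta}$ is smooth and even, a second-order Taylor expansion at the origin gives $\E[(g_\veta({\bf x})-g_\veta({\bf y}))^2]=2(K_{2\veta}({\bf 0})-K_{2\veta}({\bf x}-{\bf y}))\le M_\veta^2\|{\bf x}-{\bf y}\|^2$ for all ${\bf x},{\bf y}$, with $M_\veta^2:=\sup_{\bf z}\|\nabla^2K_{2\veta}({\bf z})\|$. As the increment is Gaussian, $\E[|g_\veta({\bf x})-g_\veta({\bf y})|^{2d}]\lesssim\|{\bf x}-{\bf y}\|^{2d}$, and since $2d>d$ the Kolmogorov--Chentsov criterion, applied on each cube $[-N,N]^d$, produces a modification with (locally Hölder, hence) continuous sample paths on all of $\R^d$. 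Because $\mathcal{A}=\cC(\R^d)$ with the compact-open topology is a Polish space whose Borel $\sigma$-field $\cB_\mathcal{A}$ is generated by the coordinate evaluations, this modification is the desired $(\mathcal{A},\cB_\mathcal{A})$-valued centered Gaussian field of covariance $C_\veta$.

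\emph{Linear growth.} The increment estimate above lets us apply (\ref{last12}) of Lemma \ref{final} on $T_N:=[-N,N]^d$ to get $\E[\sup_{{\bf x}\in T_N}g_\veta({\bf x})]\le 2K_dM_\veta N$, and the same for $-g_\veta\stackrel{\mathcal{L}}{=}g_\veta$. The Borell--TIS inequality (with variance $\sigma^2$) then yields $\P(\sup_{{\bf x}\in T_N}|g_\veta({\bf x})|\ge 2K_dM_\veta N+u)\le 2e^{-u^2/(2\sigma^2)}$ for $u>0$. Taking $u=N$ and $N=2^j$ makes the right side summable in $j$, so by Borel--Cantelli there is an a.s.\ finite random $j_0$ with $\sup_{T_{2^j}}|g_\veta|\le(2K_dM_\veta+1)2^j$ for every $j\ge j_0$; combined with the random finite constant $b:=\sup_{\|{\bf x}\|\le 2^{j_0}}|g_\veta({\bf x})|$ (finite by sample-path continuity on that ball), this gives $|g_\veta({\bf x})|\le a\|{\bf x}\|+b$ for all ${\bf x}$, with $a:=2(2K_dM_\veta+1)$ deterministic.

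\emph{Main obstacle.} For fixed $\veta>0$ the covariance is non-degenerate, smooth and stationary, so the argument is essentially soft; the two points that need care are (i) checking that the continuous modification is genuinely a random element of $(\mathcal{A},\cB_\mathcal{A})$, i.e.\ measurability in the compact-open Borel structure, which is where separability of $\cC(\R^d)$ enters, and (ii) the bookkeeping that upgrades the ``eventually, almost surely'' dyadic estimate into a bound holding simultaneously for every ${\bf x}$ in the stated form. (Using Dudley's entropy bound in place of Lemma \ref{final} would actually replace $a\|{\bf x}\|$ by $O(\sqrt{\log\|{\bf x}\|})$, but the linear bound is all that is needed downstream.)
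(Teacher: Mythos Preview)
Your proposal is correct and follows essentially the same route as the paper: an increment estimate from smoothness of $K_{2\veta}$, Kolmogorov--Chentsov for continuity, Lemma \ref{final} for the expected supremum over boxes, Borell--TIS for concentration, and Borel--Cantelli for the a.s.\ linear growth. The only cosmetic differences are that the paper invokes positive-definiteness of $C_\veta$ abstractly rather than your explicit Wiener-integral realization, bounds the increments via $1-e^{-r}\le r$ instead of a Hessian bound, runs Borel--Cantelli over integers $n$ rather than dyadic $2^j$, and at the very end redefines $g_\veta\equiv 0$ on the exceptional null set so that the growth bound holds for \emph{every} $\omega$.
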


\begin{pf}
Since $C_\varepsilon(\cdot,\cdot)$ is positive definite, there
exists a centered
Gaussian field $g_\varepsilon({\mathbf x})$ indexed on $\mathbb{R}^d$ with
covariance function $C_\varepsilon(\cdot,\cdot)$.
Further, with $\delta=2\varepsilon$ and utilizing the bound $1-e^{-r}
\le r$,
%
%
\begin{equation}
\label{eqg-cov} \mathbb{E} \bigl[ \bigl(g_\varepsilon({\mathbf
x}_1)-g_\varepsilon({\mathbf x}_2)\bigr)^2
\bigr] = 2 \bigl( K_{\delta} ({\mathbf0}) - K_{\delta}({\mathbf
x}_1-{\mathbf x}_2)\bigr) \leq\frac{\|{\mathbf x}_1-{\mathbf x}_2\|^2}{(4 \pi\delta)^{d/2}2
\delta}.
\end{equation}
Hence, using the induced bound on higher moments of
$g_\varepsilon({\mathbf x}_1)-g_\varepsilon({\mathbf x}_2)$, by
Kolmogorov--Centsov continuity
theorem we can and shall consider hereafter the unique continuous
modification of $g_\varepsilon(\cdot)$, which takes values in
${\mathcal A}$
and is measurable with respect to the corresponding
Borel $\sigma$-algebra ${\mathcal B}_{\mathcal A}$.

Combining the bound (\ref{eqg-cov}) with Lemma~\ref{final},
we have that\break
$\mathbb{E}[ \sup_{\|{\mathbf x}\|\leq n} g_\varepsilon({\mathbf x})
] \leq M' n$,
for some finite $M'=M'(d,\eta)$ and all $n$.
Further, with $\mathbb{E}[g_\varepsilon({\mathbf x})^2]=K_{2 \varepsilon
}({\mathbf0})$ uniformly
bounded in ${\mathbf x}$, we have by Borell-TIS inequality and the
symmetry of $g_\varepsilon(\cdot)$, that
\[
\mathbb{P} \Bigl( \sup_{\|{\mathbf x}\|\leq n} \bigl|g_\varepsilon({\mathbf x})\bigr| > 2
M' n \Bigr) \leq2 e^{-{M'^2 n^2}/{2 K_{2 \varepsilon}({\mathbf0})}}.
\]
Hence, by the Borel--Cantelli lemma, almost surely
$\sup_{\|\mathbf x \|\leq n} |g_\varepsilon({\mathbf x})| \leq2 M' n$
for all $n \ge N(\omega)$ large enough,
so $|g_\varepsilon({\mathbf x})|\leq a\|{\mathbf x}\|+b$, for $a=2 M'$ and
$b= b(\omega) =\sup_{\|{\mathbf x}\| \leq N(\omega)} |g_\varepsilon
({\mathbf x})|$
is a.s. finite [since $N(\omega)$ is a.s. finite and $g_\varepsilon
\in\mathcal{A}$].
Finally, to have such growth condition hold for \textit{all} $\omega$,
let $g_\varepsilon(\cdot) \equiv0$ on the null set where $N(\omega
)=\infty$,
which neither affects the law of $g_\varepsilon(\cdot)$ nor its
sample path
continuity.
\end{pf}

%
\begin{lem}\label{heat2}
Let $g\in\mathcal{ A}$ satisfy
$|g({\mathbf x})|\leq a\|{\mathbf x}\|+b$ for some $a,b$ finite.
Then, for any $d=1,\ldots,$ and $\varepsilon> 0$, setting
$\mathbb{D}_\varepsilon= \mathbb{R}^d \times(\eta,\infty)$, the function
%
%
\begin{equation}
\label{eqsol-heat} \phi({\mathbf x},t)= \int_{\mathbb{R}^d} K_{t-\varepsilon}({\mathbf x}-{\mathbf y})g({
\mathbf y})\,d{\mathbf y} = \int_{\mathbb{R}^d} K_{t-\varepsilon}({\mathbf y})g({\mathbf x} - \mathbf{y})\,d{\mathbf
y}
\end{equation}
is a solution in $\mathcal{C}_\varepsilon:= \mathcal{C}^{2,1}
(\mathbb{D}_\varepsilon)$
of the heat equation (\ref{E1}), and the unique such
solution which converges to $g({\mathbf x})$ for $t \downarrow\eta$
and satisfies the growth condition
$|\phi({\mathbf x},t)|\leq p\|{\mathbf x}\|+q\sqrt{t}+r$ for
some finite constants $p,q,r$.
\end{lem}

\begin{pf} Since $K_{s}(\cdot)$ is a probability density
on $\mathbb{R}^d$ such that $\int\|{\mathbf u}\|^2\* K_s ({\mathbf u}) \,d{\mathbf u}
= 2 d s$,
from the given growth condition of $g(\cdot)$ it follows that for
any $t>\eta$,
\[
\bigl|\phi({\mathbf x},t)\bigr|\leq b+a \|{\mathbf x}\| + a \int_{\mathbb{R}^d} \|{\mathbf y}\| K_{t-\varepsilon}
({\mathbf y}) \,d{\mathbf y} \leq b + a \|{\mathbf x}\| + a \sqrt{2d (t-\varepsilon)}.
\]
Thus, $\phi(\cdot,\cdot)$ of (\ref{eqsol-heat}) is well defined and
satisfies the growth condition (with $p=a$, $q=a\sqrt{2d}$ and $r=b$).
With $\phi({\mathbf x},\varepsilon+s)$ alternatively being the
expected value of $g({\mathbf x} - \sqrt{s} {\mathbf U})$ for a
standard multivariate normal ${\mathbf U}$, dominated convergence
provides its convergence to $g({\mathbf x})$ (uniformly on compacts),
as $s \downarrow0$.

To confirm that $\phi\in\mathcal{C}_\varepsilon$ satisfies
the heat equation (\ref{E1}) on $\mathbb{D}_\varepsilon$, note that
\begin{eqnarray*}
\phi({\mathbf x},t)&=& K_{t-\varepsilon}({\mathbf x}) F \biggl( \frac{\mathbf x}{2(t-\varepsilon)},
\frac{1}{4(t-\varepsilon)} \biggr),
\\
F({\bolds\theta}_1, \theta_2)&:=& \int_{\mathbb{R}^d} e^{{\bolds\theta}_1'{\mathbf y} -\theta_2 {\mathbf y}'\mathbf{y}} g({\mathbf y}) \,d{\mathbf y}.
\end{eqnarray*}
Clearly, $K_t({\mathbf x}) \in\mathcal{C}^\infty(\mathbb{D}_0)$ and combining
the assumed linear growth of $g(\cdot)$ with dominated
convergence, we have that also $F \in\mathcal{C}^\infty(\mathbb
{D}_0)$. Hence,
$\phi\in\mathcal{C}_\varepsilon$ and by the same reasoning, each partial
derivative of $\phi(\cdot,\cdot)$ can be taken
within the integral~(\ref{eqsol-heat}) over ${\mathbf y}$.
As $K_t({\mathbf x})$ satisfies (\ref{E1}) on $\mathbb{D}_0$,
it thus follows that $\phi(\cdot)$ satisfies this
PDE on $\mathbb{D}_\varepsilon$. Finally, the uniqueness of solution
of (\ref{E1}) in
$\mathcal{C}_\varepsilon$ subject to the assumed linear growth condition
and the given initial condition $g \in\mathcal{A}$ at $t=\varepsilon
$, is
well known (e.g., see \cite{Evans}, Theorem 2.3.7, for
uniqueness on $[\varepsilon,T]$, any $T>0$).
\end{pf}

We now complete the proof of Theorem~\ref{Heat} by
combining the preceding lemmas with Kolmogorov's
extension theorem (to construct one measurable
solution over all of $\mathbb{D}_0$).

\begin{pf*}{Proof of Theorem~\ref{Heat}}
Fixing $\delta=2\varepsilon>0$, by Lemma~\ref{heat1}
there exists centered $(\mathcal{ A},\mathcal{ B}_\mathcal{ A})$-valued
Gaussian field $g_\varepsilon(\cdot)$ of law $\mathbb{P}_\varepsilon
$ corresponding
to covariance function $K_{\delta}({\mathbf x}_1-{\mathbf x}_2)$.
We claim that $\phi|_\varepsilon=\mathbb{T}_\varepsilon
(g_\varepsilon)$ given by (\ref{eqsol-heat})
for $t \ge\delta$, is $(\mathcal{C}_\delta,\mathcal{B}_{\mathcal
{C}_\delta})$-measurable.
Indeed, consider smooth $\psi\dvtx \mathbb{R}\mapsto[0,1]$ supported
on $\mathbb{R}_+$ such that $\psi(r) =1$ for $r \ge1$ and let
$\hat{\phi}_n = \mathbb{T}_{\varepsilon,n}(g_\varepsilon)$,
given by
\[
\hat{\phi}_n ({\mathbf x},t) = \int_{\mathbb{R}^d} \psi\bigl(n - \|{\mathbf
x} - \mathbf{y}\|^2\bigr) K_{t-\varepsilon}({\mathbf x} - \mathbf{y}) g_\varepsilon({
\mathbf y}) \,d{\mathbf y}.
\]
Since these integrals are over bounded domains of ${\mathbf y}$ values
and $({\mathbf x},t) \mapsto K_{t-\varepsilon}({\mathbf x}) \psi(n - \|{\mathbf
x}\|^2)$
is smooth for $t \ge\delta>\varepsilon$, each mapping
$\mathbb{T}_{\varepsilon,n}\dvtx (\mathcal{A},\mathcal{B}_\mathcal{A})
\mapsto(\mathcal{C}_\delta,\mathcal{B}_{\mathcal{C}_\delta})$
is continuous (with respect to the relevant uniform convergence on compacts).
Further, by the growth condition of Lemma~\ref{heat1} on
$g_\varepsilon$,
for any $M<\infty$ and multi-index $({\mathbf r},\ell)$,
\begin{eqnarray*}
&& \sup_{\|{\mathbf x}\|\leq M, s \in[0,M]} \biggl| \frac{\partial}{\partial x_{r_1} \cdots\partial x_{r_k}\,\partial
s^\ell} \int_{\mathbb{R}^d} K_{s+\varepsilon}({\mathbf x}- \mathbf{y}) \bigl(1-\psi \bigl(n-\|{\mathbf x}- \mathbf{y}\|^2\bigr)\bigr)
g_\varepsilon({\mathbf y}) \,d{\mathbf y} \biggr|
\\
&&\qquad \stackrel{n \to\infty} {\longrightarrow} 0.
\end{eqnarray*}
Consequently, we have that $\mathbb{T}_{\varepsilon,n}(g_\varepsilon
) \to\phi|_\varepsilon$
in $\mathcal{C}_\delta$ as $n\to\infty$, yielding the
Borel measurability of $\phi|_\varepsilon$.

Let $\mathbb{Q}_{\delta}=\mathbb{P}_{\varepsilon} \circ\mathbb
{T}_{\varepsilon}^{-1}$ denote the
centered Gaussian law of $\phi|_\varepsilon$ thus induced on
$(\mathcal{C}_\delta,\mathcal{B}_{\mathcal{C}_\delta})$ by (\ref{eqsol-heat}).
For any $\delta'>\delta\ge0$, clearly $\mathbb{D}_{\delta} \subset
\mathbb{D}_{\delta'}$
making the identity map a projection
$\pi_{\delta,\delta'}\dvtx  \mathcal{C}_{\delta} \mapsto\mathcal
{C}_{\delta'}$,
with the complete, separable, metrizable
space $\mathcal{C}_0$ being homeomorphic to the projective limit of
$\{\mathcal{C}_\delta, \delta>0\}$ (with respect to these projections).
It is easy to check that for all $t,s \ge\delta$,
\begin{eqnarray*}
&& \mathbb{E}\bigl[ \phi\bigl|_\varepsilon({\mathbf x}_1,t)
\phi\bigr|_\varepsilon({\mathbf x}_2,s) \bigr]
\\
&&\qquad  = \int\!\!\int K_{t-\varepsilon}({\mathbf x}_1-{\mathbf y}_1)
K_{s-\varepsilon
}({\mathbf x}_2-{\mathbf y}_2)
C_\varepsilon({\mathbf y}_1,{\mathbf y}_2) \,d {\mathbf
y}_1 \,d {\mathbf y}_2
\\
&&\qquad = K_{t+s}({\mathbf
x}_1-{\mathbf x}_2),
\end{eqnarray*}
is independent of $\varepsilon>0$. In particular, for any
$\delta'>\delta>0$ the Borel probability measure $\mathbb{Q}_{\delta'}$
on $\mathcal{C}_{\delta'}$ is just the push-forward of $\mathbb
{Q}_{\delta}$
via the projection $\pi_{\delta,\delta'}$. Consequently, setting
the f.d.d. of $\{ \phi|_{\varepsilon'} (\cdot)\dvtx  \varepsilon' \ge
\varepsilon\}$
on $(0,\infty)$ to match those of $\{ \pi_{2\varepsilon,2\varepsilon
'}(\phi|_\varepsilon)\dvtx
\varepsilon' \ge\varepsilon\}$
yields a consistent collection, so Kolmogorov's
extension theorem provides existence of Borel probability
measure $\mathbb{Q}_0$ on $\mathcal{C}_0$ such that each $\mathbb
{Q}_\delta$
is the push-forward of $\mathbb{Q}_0$ by $\pi_{0,\delta}$
(see, e.g., \cite{D}, Theorems 12.1.2 and 13.1.1).
In particular, $\mathbb{Q}_0$ corresponds to a centered
Gaussian field $\phi_d \in\mathcal{C}_0$ having the same
covariance as its restrictions $\phi|_\varepsilon$
to subdomains $\mathbb{D}_{2 \varepsilon}$. As each $\phi
|_\varepsilon$
satisfies (\ref{E1}) on $\mathbb{D}_\varepsilon$, clearly
$\phi_d$ satisfies it throughout $\mathbb{D}_0$ and
the identity (\ref{E4}) further follows from
our explicit construction via (\ref{eqsol-heat})
of the restriction of $\phi_d$ to $\mathbb{D}_{t_1}$
[by utilizing Fubini's theorem, the growth condition of Lemma~\ref{heat1}
and convolution properties of the Brownian semigroup $t \mapsto
K_t(\cdot)$].
Finally, $\phi_d \in\mathcal{C}^\infty(\mathbb{D}_0)$ by the
integral representation
(\ref{E4}) and smoothness of $({\mathbf x},t) \mapsto K_t({\mathbf x})$.
\end{pf*}

\section*{Acknowledgments}
We thank Jonathan Taylor for many helpful discussions
and the anonymous referees whose comments helped to
improved the presentation of this paper.



%

\printaddresses


\begin{thebibliography}{19}

\bibitem{AT}
%
\begin{bbook}[mr]
\bauthor{\bsnm{Adler},~\bfnm{Robert~J.}\binits{R.~J.}} \AND
\bauthor{\bsnm{Taylor},~\bfnm{Jonathan~E.}\binits{J.~E.}}
(\byear{2007}).
\btitle{Random Fields and Geometry}.
\bpublisher{Springer},
\blocation{New York}.
\bid{mr={2319516}}
\end{bbook}
%
\bptok{imsref}%
\endbibitem

\bibitem{AS}
%
\begin{bmisc}[auto:STB|2014/01/06|10:16:28]
\bauthor{\bsnm{Aurzada},~\bfnm{F.}\binits{F.}} \AND
\bauthor{\bsnm{Simon},~\bfnm{T.}\binits{T.}}
(\byear{2014}).
\bhowpublished{Persistence probabilities and exponents. In \textit{L\'evy Matters}.
Springer. To appear.}
\end{bmisc}
%
\bptok{imsref}%
\endbibitem

\bibitem{BGT}
%
\begin{bbook}[mr]
\bauthor{\bsnm{Bingham},~\bfnm{N.~H.}\binits{N.~H.}},
\bauthor{\bsnm{Goldie},~\bfnm{C.~M.}\binits{C.~M.}} \AND
\bauthor{\bsnm{Teugels},~\bfnm{J.~L.}\binits{J.~L.}}
(\byear{1987}).
\btitle{Regular Variation}.
\bpublisher{Cambridge Univ. Press},
\blocation{Cambridge}.
\bid{mr={0898871}}
\end{bbook}
%
\bptok{imsref}%
\endbibitem

\bibitem{BL}
%
\begin{barticle}[auto]
\bauthor{\bsnm{Bramson},~\bfnm{M.}\binits{M.}} \AND
\bauthor{\bsnm{Lebowitz},~\bfnm{J.~L.}\binits{J.~L.}}
(\byear{2001}).
\btitle{Spatial structure in low dimensions for diffusion limited
two-particle reactions}.
\bjournal{Ann. Appl. Probab.}
\bvolume{11}
\bpages{121--181}.
\bid{mr={1825462}}
\end{barticle}
%
\bptok{imsref}%
\endbibitem

\bibitem{DPSZ}
%
\begin{barticle}[mr]
\bauthor{\bsnm{Dembo},~\bfnm{Amir}\binits{A.}},
\bauthor{\bsnm{Poonen},~\bfnm{Bjorn}\binits{B.}},
\bauthor{\bsnm{Shao},~\bfnm{Qi-Man}\binits{Q.-M.}} \AND
\bauthor{\bsnm{Zeitouni},~\bfnm{Ofer}\binits{O.}}
(\byear{2002}).
\btitle{Random polynomials having few or no real zeros}.
\bjournal{J. Amer. Math. Soc.}
\bvolume{15}
\bpages{857--892}.
\bid{doi={10.1090/S0894-0347-02-00386-7}, issn={0894-0347}, mr={1915821}}
\end{barticle}
%
\bptok{imsref}%
\endbibitem

\bibitem{D}
%
\begin{bbook}[mr]
\bauthor{\bsnm{Dudley},~\bfnm{R.~M.}\binits{R.~M.}}
(\byear{2002}).
\btitle{Real Analysis and Probability}.
\bpublisher{Cambridge Univ. Press},
\blocation{Cambridge}.
\bnote{Revised reprint of the 1989 original}.
\bid{doi={10.1017/CBO9780511755347}, mr={1932358}}
\bptnote{check year}%
\end{bbook}
%
\bptok{imsref}%
\endbibitem

\bibitem{Evans}
%
\begin{bbook}[auto:STB|2014/01/06|10:16:28]
\bauthor{\bsnm{Evans},~\bfnm{L.~C.}\binits{L.~C.}}
(\byear{1998}).
\btitle{Partial Differential Equations}.
\bpublisher{Amer. Math. Soc.},
\blocation{Providence, RI}.
\end{bbook}
%
\bptok{imsref}%
\endbibitem

\bibitem{IbZa}
\begin{bincollection}[mr]
\bauthor{\bsnm{Ibragimov},~\bfnm{Ildar}\binits{I.}} \AND
\bauthor{\bsnm{Zaporozhets},~\bfnm{Dmitry}\binits{D.}}
(\byear{2013}).
\btitle{On distribution of zeros of random polynomials in complex plane}.
In \bbooktitle{Prokhorov and Contemporary Probability Theory}.
\bseries{Springer Proc. Math. Stat.}
\bvolume{33}
\bpages{303--323}.
\bpublisher{Springer},
\blocation{Heidelberg}.
\bid{doi={10.1007/978-3-642-33549-5_18}, mr={3070481}}
\end{bincollection}
%
\bptok{imsref}%
\endbibitem

\bibitem{KaZa}
%
\begin{bmisc}[author]
\bauthor{\bsnm{Kabluchko},~\bfnm{Z.}\binits{Z.}} \AND
\bauthor{\bsnm{Zaporozhets},~\bfnm{D.~N.}\binits{D.~N.}}
(\byear{2015}).
\bhowpublished{Universality for zeros of random analytic functions.
\textit{Ann. Probab.} To appear.}
\end{bmisc}
%
\bptok{imsref}%
\endbibitem

\bibitem{K}
%
\begin{barticle}[mr]
\bauthor{\bsnm{Kac},~\bfnm{M.}\binits{M.}}
(\byear{1943}).
\btitle{On the average number of real roots of a random algebraic equation}.
\bjournal{Bull. Amer. Math. Soc. (N.S.)}
\bvolume{49}
\bpages{314--320}.
\bnote{Erratum: \textit{Bull. Amer. Math. Soc.} (\textit{N.S.}) \textbf{49} 938}.
\bid{issn={0002-9904}, mr={0007812}}
\bptnote{check related}%
\end{barticle}
%
\bptok{imsref}%
\endbibitem

\bibitem{LS1}
%
\begin{barticle}[mr]
\bauthor{\bsnm{Li},~\bfnm{Wenbo~V.}\binits{W.~V.}} \AND
\bauthor{\bsnm{Shao},~\bfnm{Qi-Man}\binits{Q.-M.}}
(\byear{2002}).
\btitle{A normal comparison inequality and its applications}.
\bjournal{Probab. Theory Related Fields}
\bvolume{122}
\bpages{494--508}.
\bid{doi={10.1007/s004400100176}, issn={0178-8051}, mr={1902188}}
\end{barticle}
%
\bptok{imsref}%
\endbibitem

\bibitem{LS2}
%
\begin{barticle}[mr]
\bauthor{\bsnm{Li},~\bfnm{Wenbo~V.}\binits{W.~V.}} \AND
\bauthor{\bsnm{Shao},~\bfnm{Qi-Man}\binits{Q.-M.}}
(\byear{2005}).
\btitle{Recent developments on lower tail probabilities for {G}aussian
processes}.
\bjournal{Cosmos}
\bvolume{1}
\bpages{95--106}.
\bid{doi={10.1142/S0219607705000103}, issn={0219-6077}, mr={2329259}}
\end{barticle}
%
\bptok{imsref}%
\endbibitem

\bibitem{LO1}
%
\begin{barticle}[auto:STB|2014/01/06|10:16:28]
\bauthor{\bsnm{Littlewood},~\bfnm{J.~E.}\binits{J.~E.}} \AND
\bauthor{\bsnm{Offord},~\bfnm{A.~C.}\binits{A.~C.}}
(\byear{1938}).
\btitle{On the number of real roots of a random algebraic equation}.
\bjournal{J. Lond. Math. Soc. (2)}
\bvolume{13}
\bpages{288--295}.
\end{barticle}
%
\bptok{imsref}%
\endbibitem

\bibitem{LO2}
%
\begin{barticle}[auto:STB|2014/01/06|10:16:28]
\bauthor{\bsnm{Littlewood},~\bfnm{J.~E.}\binits{J.~E.}} \AND
\bauthor{\bsnm{Offord},~\bfnm{A.~C.}\binits{A.~C.}}
(\byear{1939}).
\btitle{On the number of real roots of a random algebraic equation. II}.
\bjournal{Proc. Cambridge Philos. Soc.}
\bvolume{35}
\bpages{133--148}.
\end{barticle}
%
\bptok{imsref}%
\endbibitem

\bibitem{LO3}
%
\begin{barticle}[mr]
\bauthor{\bsnm{Littlewood},~\bfnm{J.~E.}\binits{J.~E.}} \AND
\bauthor{\bsnm{Offord},~\bfnm{A.~C.}\binits{A.~C.}}
(\byear{1943}).
\btitle{On the number of real roots of a random algebraic equation. {III}}.
\bjournal{Rec. Math. [Mat. Sbornik] N.S.}
\bvolume{12}
\bpages{277--286}.
\bid{mr={0009656}}
\end{barticle}
%
\bptok{imsref}%
\endbibitem

\bibitem{M}
%
\begin{barticle}[auto:STB|2014/01/06|10:16:28]
\bauthor{\bsnm{Majumdar},~\bfnm{S.~N.}\binits{S.~N.}}
(\byear{1999}).
\btitle{Persistence in nonequilibrium systems}.
\bjournal{Current Sci.}
\bvolume{77}
\bpages{370--375}.
\end{barticle}
%
\bptok{imsref}%
\endbibitem

\bibitem{Mas}
%
\begin{barticle}[author]
\bauthor{\bsnm{Maslova},~\bfnm{N.~B.}\binits{N.~B.}}
(\byear{1975}).
\btitle{The distribution of the number of real roots of random polynomials}.
\bjournal{Theory Probab. Appl.}
\bvolume{19}
\bpages{461--473}.
\end{barticle}
%
\bptok{imsref}%
\endbibitem

\bibitem{Mol}
%
\begin{bmisc}[mr]
\bauthor{\bsnm{Molchan},~\bfnm{G.}\binits{G.}}
(\byear{2012}).
\bhowpublished{Survival exponents for some {G}aussian processes.
Preprint. Available at \arxivurl{arXiv:1203.2446v1}}.
\end{bmisc}
%
\bptok{imsref}%
\endbibitem

\bibitem{SM2}
%
\begin{barticle}[pbm]
\bauthor{\bsnm{Schehr},~\bfnm{Gr{\'{e}}gory}\binits{G.}} \AND
\bauthor{\bsnm{Majumdar},~\bfnm{Satya~N.}\binits{S.~N.}}
(\byear{2007}).
\btitle{Statistics of the number of zero crossings: From random
polynomials to the diffusion equation}.
\bjournal{Phys. Rev. Lett.}
\bvolume{99}
\bpages{060603}.
\bid{issn={0031-9007}, pmid={17930811}}
\end{barticle}
%
\bptok{imsref}%
\endbibitem

\bibitem{SM}
%
\begin{barticle}[mr]
\bauthor{\bsnm{Schehr},~\bfnm{Gr{\'e}gory}\binits{G.}} \AND
\bauthor{\bsnm{Majumdar},~\bfnm{Satya~N.}\binits{S.~N.}}
(\byear{2008}).
\btitle{Real roots of random polynomials and zero crossing properties
of diffusion equation}.
\bjournal{J. Stat. Phys.}
\bvolume{132}
\bpages{235--273}.
\bid{doi={10.1007/s10955-008-9574-3}, issn={0022-4715}, mr={2415102}}
\end{barticle}
%
\bptok{imsref}%
\endbibitem

\end{thebibliography}
\end{document}